\definecolor{shadecolor}{RGB}{180,180,180}
\newcommand{\C}{\mathcal{C}}
\renewcommand{\S}{\mathcal{S}}
\newcommand{\R}{\mathcal{R}}
\renewcommand{\aa}{\boldsymbol{a}}
\newcommand{\Ainf}{A_{\infty}}
\newcommand{\Binf}{B_{\infty}}
\newcommand{\Cinf}{C_{\infty}}
\newcommand{\cc}{\mathbf{c}}
\newcommand{\RR}{\mathbf{R}}
\newcommand{\ww}{\mathbf{c}_{\infty}}
\newcommand{\CC}{\mathbf{C}}
\newcommand{\WW}{\mathbf{C}_{\infty}}
\def\pa{\partial}
\def\mathbf{\boldsymbol}
\begin{document}

\title[Complex balanced chemical reaction networks]
{Trend to equilibrium for reaction-diffusion systems arising from complex balanced chemical reaction networks
}

\author[L. Desvillettes, K. Fellner, B.Q. Tang] {Laurent Desvillettes, Klemens Fellner and Bao Quoc Tang}

\address{Laurent Desvillettes\hfill\break
	Univ. Paris Diderot, Sorbonne Paris Cit\'{e}, Institut de Math\'{e}matiques de Jussieu - Paris Rive
	Gauche, UMR 7586, CNRS, Sorbonne Universit\'{e}s, UPMC Univ. Paris 06, F-75013, Paris, France.}
\email{desvillettes@math.univ-paris-diderot.fr}

\address{Klemens Fellner\hfill\break
	Institute of Mathematics and Scientific Computing, University of Graz, Heinrichstrasse 36, 8010 Graz, Austria.}
\email{klemens.fellner@uni-graz.at}

\address{Bao Quoc Tang$^{1, 2}$ \hfill\break
$^{1}$ Institute of Mathematics and Scientific Computing, University of Graz, Heinrichstrasse 36, 8010 Graz, Austria\hfill\break
$^{2}$ Faculty of Applied Mathematics and Informatics, Hanoi University of Sience and Technology, 1 Dai Co Viet, Hai Ba Trung, Hanoi, Vietnam}
\email{quoc.tang@uni-graz.at} 

\subjclass[2010]{35B40, 35K57, 35Q92, 80A30, 80A32}
\keywords{Complex balanced reaction networks; Reaction-diffusion systems; Convergence to equilibrium; Entropy method; Complex balance equilibria; Boundary equilibria}

\begin{abstract}
	The quantitative convergence to equilibrium for reaction-diffusion systems arising from complex balanced chemical reaction networks with mass action
	kinetics is studied by using the so-called entropy method. 
		In the first part of the paper, by deriving explicitly the entropy dissipation, we show that for complex balanced systems without boundary equilibria, each trajectory converges exponentially fast to the unique complex balance equilibrium. Moreover, a constructive proof is proposed to explicitly estimate the rate of convergence in the special case of a cyclic reaction. 
	In the second part of the paper, complex balanced systems with boundary equilibria are considered. We
	focus on a specific case involving 
	 three chemical substances
	 for which the boundary equilibrium is shown to be unstable in some sense, so that  exponential convergence to the unique strictly positive equilibrium is recovered.
\end{abstract}

\maketitle
\numberwithin{equation}{section}
\newtheorem{theorem}{Theorem}[section]
\newtheorem{lemma}[theorem]{Lemma}
\newtheorem{proposition}[theorem]{Proposition}
\newtheorem{definition}{Definition}[section]
\newtheorem{remark}{Remark}[section]
\newtheorem{corollary}[theorem]{Corollary}
\newtheorem*{gac*}{Global Attractor Conjecture}

\tableofcontents
\section{Introduction and Main results}
%
%
%
%
%
%

\medskip
The foundation of the study of chemical reaction networks goes back to the pioneering works of F.J.M. Horn, R. Jackson, A.I. Volpert and M. Feinberg,  (see e.g. \cite{HoJa72, Vol72, FeHo74, Fe79, Fe87}). The aim of the theory is to study the behaviour of chemical reaction networks regardless of specific values of the reaction rate constants, 
since these rates may be hard to determine in some practical situations. One of the main questions of the theory is the large time asymptotic behaviour of the dynamical system corresponding to the reaction network. While the ODE setting of chemical reaction networks has been extensively studied in the literature, the PDE setting is much less investigated. This is the main motivation of this work. The ultimate aim of this paper is to investigate the large time asymptotic behaviour of reaction-diffusion systems arising from chemical reaction networks. More precisely, by exploiting the so-called entropy method, we look for  {\it quantitative estimates of convergence to equilibrium} for a class of chemical reaction networks called {\it complex balanced reaction networks}.


\medskip
%

Before providing a general notation of chemical reaction networks, we shall consider the example of a single reaction $S_1 + S_2 \xrightarrow{k} S_3$ where $k>0$ is a reaction rate constant. To describe this reaction, we introduce the set of {\it chemical substances} $\mathcal S = \{S_1, S_2, S_3\}$, the set of {\it chemical complexes} $\mathcal C = \{(1,1,0); (0,0,1)\}$ corresponding to $S_1 + S_2$ and $S_3$ and the set of {\it chemical reactions} $\mathcal{R} = \{(1,1,0) \xrightarrow{k} (0,0,1) \}$. Now, by following the notation of e.g. \cite{HoJa72}, a general chemical reaction network consists of $N$ {chemical substances} $\mathcal S = \{S_1, \ldots, S_N\}$, a finite set $\mathcal C$ of {chemical complexes}, which appear on either sides of a chemical reaction, i.e. 
$\mathcal C = \{y\}\subset {{\mathbb{N}^N}}$
(This is a slight abuse of notation. We should indeed write $\{y^{(1)},..,y^{(|C|)} \}$, but we do not because superscripts with another meaning will be used in the sequel)
 and the set of {chemical reactions} $\mathcal R = \{y_r \xrightarrow{k_r} y_r': y_r, y_r' \in \mathcal C,\, k_r>0\}$. We shall only consider chemical reaction networks which satisfy the following natural assumptions:  
\begin{definition}[Chemical reaction networks]\label{def:crn}\hfill\\
	Let $\S = \{S_i\}_{i=1}^{N}$, $\C = \{y\}\subset \mathbb{N}^N$ and $\R = \{y_r \xrightarrow{k_r} y_r'\}_{r=1,\ldots,|\R|}$ denote finite sets of species, complexes and reactions, respectively. The triple $\{\S, \C, \R\}$ is called a chemical reaction network as long as the following three natural requirements are met:
	\begin{itemize}
		\item[1.] For each $S_i \in \S$, there exists at least one complex $y\in \C$ in which the stoichiometric coefficient $y_i$ of $S_i$ is strictly positive (subscripts for $y$ are used both to prescribe stoichiometric coefficients (usually $i$ is then used) and to prescribe the index of a chemical reaction (usually $r$ is then used); when both need to be prescribed, we use $y_{r,i}$);
		\item[2.] There is no trivial reaction $y\rightarrow y\in \R$ for any complex $y\in\C$;
		\item[3.] For any $y\in \C$, there must exist $y'\in \C$ such that either $y\rightarrow y'\in \R$ or $y' \rightarrow y\in \R$.
	\end{itemize}
\end{definition}

 We assume that the chemical species corresponding to the reaction network $\{\mathcal S, \mathcal C, \mathcal R\}$ are contained in a bounded vessel (or reactor) $\Omega\subset \mathbb R^n$, where $\Omega$ is a bounded smooth ($C^2$) domain. 
  Moreover, we denote by $\cc(x,t) = (c_1(x,t), \ldots, c_N(x,t))$ the vector of concentrations, where $c_i(x,t)$ is the concentration of the specie $S_i$ at time $t>0$ and position $x\in\Omega$. Each substance $S_i$ is assumed to diffuse in $\Omega$ with a strictly positive diffusion coefficient $d_i>0$. 
 The corresponding reaction-diffusion system reads as
\begin{equation}\label{e0}
\frac{\partial}{\partial t}\cc - \mathbb D\Delta \cc = \mathbf{R}(\cc) \quad \text{ for } \quad (x,t)\in \Omega\times \mathbb R_+ ,
\end{equation}
where the diffusion matrix $\mathbb D = \mathrm{diag}(d_1, \ldots, d_N)$ is 
positive definite (since $d_i>0$ for all $i=1,\ldots,N$). Moreover,  by applying the {\it law of mass action kinetics}, 
the reaction vector $\mathbf{R}(\cc)$ is modelled as
\begin{equation}\label{Reaction}
\mathbf{R}(\cc) = \sum_{r=1}^{|\mathcal R|}k_r\cc^{y_r}(y_r' - y_r), \quad \text{ with } \quad \cc^{y_r} := \prod_{i=1}^{N}c_i^{y_{r,i}},
\end{equation}
where $k_r>0$ denotes the rate constant of the $r$-th reaction. Finally, system \eqref{e0} is associated to
nonnegative initial data ${\mathbf{c}}_0(x) \ge 0$ (by which we mean ${\mathbf{c}}_0(x) := (c_{1,0}(x),..,c_{N,0}(x))$ and $c_{i,0}(x) \ge 0$ for $i=1,..,N$), and homogeneous Neumann boundary condition 
\begin{equation}\label{Nin}
 \cc(x,0) = {\mathbf{c}}_0(x) \,\, {\hbox{ for }}\,\,  x\in \Omega ,
\quad\text{ and } \quad 
\nabla \cc \cdot \nu = 0 \,\,{\hbox{ for }}\,\, (x,t)\in \partial\Omega \times \mathbb{R}_+ ,
 \end{equation} 
where $\nu := \nu(x)$ is the outward normal unit vector at point $x \in \pa\Omega$.
 \medskip 
 
Concerning the set of chemical reactions $\mathcal R$, we denote by {$W$ the Wegscheider matrix, i.e.
	\begin{equation*} 
	{W} = [(y_r' - y_r)_{r=1,\ldots,{|\R|}}]^{\top} \in \mathbb{R}^{|\R|\times N},
	\end{equation*}
and $m = \mathrm{dim}(\mathrm{ker}(W))$}. Then, if $m>0$, there exists a (non-unique) matrix $\mathbb Q\in \mathbb R^{m\times N}$  such that 
\begin{equation}\label{11ter}
	\mathrm{rank}(\mathbb Q) = m \qquad \text{ and } \qquad \mathbb Q\, \mathbf{R}(\cc) = 0 \; \text{ for all } \; \cc\in \mathbb R^N_{\geq 0} \,(:= (\mathbb R_+)^N),
\end{equation}
so that the rows of $\mathbb Q$ form a basis of {$\mathrm{ker}(W)$}. 
Thus, because of the homogeneous Neumann boundary condition, the use of the matrix $\mathbb Q$ leads, at the formal level,  to the following {\it mass conservation laws} for the solutions of eq. (\ref{e0}):
\begin{equation*}
	\mathbb Q\,\overline{\cc}(t) = \mathbb Q\,\overline{\cc_0} =: M/|\Omega| \in \mathbb R^m \quad \text{ for all } \quad t>0,
\end{equation*}
where $\overline{\cc} := (\overline{c_1}, \ldots, \overline{c_N})$ and $\overline{c_i} := \int_{\Omega}c_i \frac{dx}{|\Omega|}$, and $M$ is called the {\it initial mass vector}. By changing the signs of some rows of $\mathbb Q$ if necessary, we can always consider (w.l.o.g.) a matrix $\mathbb Q$ such that the initial mass vector $M$ is nonnegative, i.e. $M \in \mathbb R_{\ge0}^m$.
\medskip 

To state our results, we require the following definitions concerning {\it equilibria} of chemical reaction networks.

\begin{definition}[Equilibria]\label{Equilibria} \hfill\\
Consider a chemical reaction network with a reaction vector $\mathbf{R}$
defined by (\ref{Reaction})
  and  a mass vector $M \in \mathbb R^m_{\geq 0}$.
	Let $\ww := (c_{1,\infty},..,c_{N,\infty})\in \mathbb{R}^N_{\geq 0}$ be such that $\mathbb Q\,\ww = M$. Then,
	\begin{itemize}
		\item $\ww$ is called an {\normalfont{equilibrium}}  if $\mathbf{R}(\ww) = 0$.
		
		\item $\ww$ is called a {\normalfont{detailed balance equilibrium}}  if for each forward reaction $y \xrightarrow{k_{f}} y'$ (with $k_f>0$) in $\mathcal R$, there exists in $\mathcal{R}$ also the corresponding backward reaction $y' \xrightarrow{k_{b}} y$ (with $k_b>0$) and 
\begin{equation*}
		k_f\ww^{y} = k_b\ww^{y'}.
\end{equation*}
		
\item 	$\ww$ is called a {\normalfont{complex balance equilibrium}} if for any complex $y\in \C$, we have
\begin{equation} \label{ComplexBalance}
\sum_{\{r:\, y_r = y\}}k_r\ww^{y_r} = \sum_{\{s:\, y_s' = y\}}k_s\ww^{y_s}.
\end{equation}
\par 	 
Roughly speaking, a state $\ww$ is a complex balance equilibrium if and only if the total out- and inflow at $\ww$ are equal for every complex $y$. 
		
\item $\ww$ is called a {\normalfont{boundary detailed/complex balance equilibrium}} (or shortly a {\normalfont{boundary equilibrium}}) if $\ww$ is a detailed/complex balance equilibrium and $\ww\in \partial\mathbb{R}^N_{\geq 0}$.
	\end{itemize}		

A chemical reaction network is called  {\it complex balanced} if it possesses a strictly positive (i.-e., not a boundary) complex balance equilibrium for each 
strictly positive mass vector $M \in \mathbb R^m_{>0}$.
\end{definition}
\medskip

It follows directly from the above definitions that
\begin{equation*}
\ww \text{ is a detailed balance equilibrium} \Rightarrow \ww \text{ is a complex balance equilibrium} \Rightarrow \ww \text{ is an equilibrium},
\end{equation*}
but the reverse is in general not true. 
\medskip

The principle of detailed balance goes back as far as Boltzmann for modelling collisions in kinetic gas theory and for proving the H-theorem for Boltzmann's equation \cite{Bol1896}. It was then applied to chemical kinetics by Wegscheider \cite{Weg1901}. The complex balance condition was also considered by Boltzmann \cite{Bol1887} under the name \emph{semi-detailed balance condition} or \emph{cyclic balance condition}, and was 
systematically used by Horn, Jackson and Feinberg in the seventies, see e.g. \cite{Hor72, FeHo74}.

It is by now a well-known fact for complex balanced chemical reaction networks that all equilibria (of such systems) are complex balanced (cf. \cite{Hor72}). 
Moreover, for each strictly positive initial mass vector ${M \in \mathbb R^m_{> 0}}$, there exists (for such systems) a unique strictly positive (that is, $\ww \in \mathbb{R}^N_{> 0}$) complex balance equilibrium satisfying the mass conservation laws. One or several boundary equilibria (for such systems) can nevertheless exist (cf. \cite{HoJa72}). 

There is also an extensive literature concerning the large time asymptotics of complex balanced systems in the ODE setting, 
i.e. by considering $\cc \equiv \cc(t)$, which satisfies the ODE system
\begin{equation}\label{e00}
	\frac{d}{d t}\cc = \mathbf{R}(\cc),
	\end{equation}
where $\mathbf R(\cc)$ is defined as \eqref{Reaction}. Indeed, it is proven that the unique strictly positive complex balance equilibrium of an ODE reaction network (of such a system) is locally stable (cf. \cite{HoJa72}). 
Moreover, it is conjectured that this equilibrium is in fact globally stable, i.e. that it is the unique global attractor for the dynamical system given by the ODE network (with exception of the boundary equilibria points).
This statement is usually called the {\it Global Attractor Conjecture} and has remained one of the most important open problems in the theory of chemical reaction networks, see e.g. \cite{And11, CNN13, GMS14, Pan12} and the references therein.  
A recently proposed proof of this conjecture in the ODE setting is currently under verification \cite{Cra15}.

For the rest of the paper, we shall systematically consider only complex balance systems. We shall refer to the unique strictly positive complex balance equilibrium (of our systems) as ``the'' (strictly positive) complex balance equilibrium, while all the other complex balance equilibria (which are necessarily boundary equilibria)
are simply named boundary equilibria. 
\medskip

{\bf The first part of this paper} is devoted to the quantitative study of the convergence to equilibrium for complex balanced systems \eqref{e0} of reaction diffusion PDEs without boundary equilibria, and is based on the
so-called {\it entropy method}. 

The  idea of the entropy method consists in studying the large-time asymptotics of a dissipative PDE model by looking for a nonnegative (convex) Lyapunov functional $\mathcal{E}(f)$ and its nonnegative dissipation
\begin{equation} \label{defd}
	\mathcal{D}(f(t)) := -\frac{d}{dt}\mathcal{E}(f(t)),
\end{equation}
along the flow of the PDE model. We shall consider entropy dissipation functionals which are well-behaved in the sense that  firstly, all states satisfying $\mathcal{D}(f) = 0$ (and coherent with the conservation laws of the PDE) correspond to a unique (entropy-minimising) equilibrium $f_{\infty}$, i.e.
\begin{equation*}
	\mathcal{D}(f) = 0 \quad \text{ and } \quad \text{ coherence with conservation laws } \qquad \Longleftrightarrow \qquad f = f_{\infty},
\end{equation*}
and secondly, there exists an {\it entropy entropy-dissipation estimate} of the form
\begin{equation}\label{eede}
	\mathcal{D}(f)\geq \Phi(\mathcal{E}(f) - \mathcal{E}(f_{\infty})), 
\end{equation}
which holds for all $f$ coherent with the conservation laws of the PDE, and
for some nonnegative function $\Phi$ satisfying $\Phi(x) = 0 \Leftrightarrow x = 0$. 
 In this method, if $\Phi'(0)\not= 0$, one gets (at least formally) exponential convergence toward $f_{\infty}$ in relative entropy $\mathcal{E}(f) - \mathcal{E}(f_{\infty})$, with a rate given by some variant of Gronwall's lemma. 
\medskip

It is well known in the ODE theory of complex balanced reaction systems \eqref{e00}
that the free energy
\begin{equation}\label{FreeEnergy}
	E(\cc|\ww) = \sum_{i=1}^{N}\left(c_i\log\frac{c_i}{c_{i,\infty}} - c_i + c_{i,\infty}\right),\qquad\qquad\qquad\quad \text{(ODE entropy functional)}
\end{equation}
is decreasing in time along the solutions of the ODE, i.e. $\frac{d}{dt}E(\cc(t)|\ww) \leq 0$ when $\cc : = \cc(t)$ satisfies eq. \eqref{e00} (cf. \cite{HoJa72, Fe79, Gop13, And14}), and $\ww$ is the complex balance equilibrium of the network. This suggests to consider as an entropy functional for the PDE system \eqref{e0} the quantity
\begin{equation}\label{FreeEnergy_PDE}
\qquad\quad\mathcal{E}(\cc|\ww) = \sum_{i=1}^{N}\int_{\Omega}\left(c_i\log\frac{c_i}{c_{i,\infty}} - c_i + c_{i,\infty}\right)dx, \qquad\qquad\quad \text{(PDE entropy functional)}.
\end{equation}
It is remarkable, however, that the monotonicity of $ t \mapsto {E}(\cc|\ww)$ is in general
 shown indirectly, so that the explicit form of the entropy dissipation $ D(\cc):=-\frac{d}{dt}{E}(\cc|\ww)\geq 0$ is not written down.
  In this paper, we  therefore compute the explicit form of the entropy dissipation functionals ${D}(\cc) = -dE/dt$ and $\mathcal{D}(\cc) = -d\mathcal E/dt$, which is necessary for applying the entropy-entropy dissipation method. More precisely,
we show that
\begin{equation}\label{ED_PDE}
	\mathcal{D}(\cc) := \sum_{i=1}^{N}d_i \int_{\Omega}\frac{|\nabla c_i|^2}{c_i}dx + \sum_{r=1}^{|\mathcal R|}k_r\ww^{y_r}\int_{\Omega}\Psi\left(\frac{\cc^{y_r}}{\ww^{y_r}}; \frac{\cc^{y_r'}}{\ww^{y_r'}}\right)dx 
\end{equation}
satisfies  \eqref{defd}, where $\mathcal{E}$ is defined by \eqref{FreeEnergy_PDE}, when ${\mathbf {c}}$ is a (sufficiently integrable) solution of eq. \eqref{e0}.
In \eqref{ED_PDE}, $\Psi: (0,+\infty)\times (0,+\infty) \rightarrow \mathbb R_+$ is defined by
\begin{equation}\label{psi}
\Psi(x; y) := x\log(x/y) - x + y   \quad  (\ge 0), 
\end{equation}
 and it is easy to check that $\Psi(x; y)=0 \iff x=y$.
We refer to Prop. \ref{cale} below for the verification that  
$\mathcal{D}$ indeed satisfies   \eqref{defd}. 
\medskip

Our first main result concerning complex balanced systems of reaction-diffusion PDEs without boundary equilibria is stated in Theorem \ref{EEDE} below. 
We show that all renormalised solutions, which satisfy a weak entropy entropy-dissipation law, converge  exponentially fast to the unique strictly positive complex balance equilibrium. 

\begin{theorem}[Exponential convergence to equilibrium for complex balanced systems without boundary equilibria]\label{EEDE}\hfill\\
Let $\Omega$ be a bounded smooth $(C^2)$ domain of $ \mathbb{R}^n$. Consider a (positive definite) diffusion matrix $\mathbb D = \mathrm{diag}(d_1, \ldots, d_N)$ with $d_i>0$ for $i=1,2,\ldots, N$. Let
$\mathbf R$ be a  reaction term describing a complex balanced chemical reaction network
without boundary equilibria (coming out of the mass action law, cf. \eqref{Reaction}), and fix a strictly positive initial mass vector $M\in \mathbb{R}^m_{>0}$.
 We recall that such a network possesses a unique strictly positive equilibrium $\ww$. 
	\par 
Then, there exists a constant $\lambda>0$ 
such that for all nonnegative measurable
functions $\cc = (c_1, \ldots, c_N):\Omega \rightarrow \mathbb{R}_+^N$ satisfying the mass conservation laws $\mathbb{Q}\,\overline{\cc} = M$ (cf. \eqref{11ter} for the definition of $\mathbb{Q}$) and the upper bound $\mathcal  E (  \bar{\cc} | \ww) \le K$,
the following entropy-entropy dissipation inequality holds:
	\begin{equation}\label{main}
	\mathcal{D}(\cc) \geq \lambda\, \mathcal{E}(\cc|\ww),
	\end{equation}
	where $\mathcal{E}(\cc|\ww)$ and $\mathcal{D}(\cc)$ are defined in \eqref{FreeEnergy_PDE} and \eqref{ED_PDE}, respectively, and $\lambda$ solely depends on $\Omega$, $d_i$, $M$, 
 the coefficients appearing in $ \mathbf{R}$, and an upper bound $K$ on the relative entropy 
 $\mathcal E(\overline{\cc}|\ww)$
of $\bar{\cc}$ w.r.t. $\ww$.
	\par 
Moreover, following \cite{Fis15}, for any nonnegative initial data with finite initial mass and relative entropy, i.e. $\cc_0\in (L^1(\Omega))^N$ and $\mathcal{E}(\cc_0|\ww) < +\infty$, the system \eqref{e0}--\eqref{Nin} possesses a global renormalised solution $\cc(x,t) = (c_1(x,t), \ldots, c_N(x,t))$ (cf. Rmk. \ref{rmk11} for a precise definition of this solution concept).
\par 
Then, each such renormalised solution which moreover satisfies 
the weak entropy entropy-dissipation law 
\begin{equation}\label{integrability}
	\mathcal{E}(\mathbf c|\mathbf c_{\infty})(t) + \int_{s}^{t}\mathcal{D}(\mathbf c)(r)dr \leq \mathcal E(\mathbf c|\mathbf c_{\infty})(s),\qquad a.a. \quad t\geq s >0, 
\end{equation}
converges in $L^1$-norm
exponentially fast to the (unique, strictly positive) complex balance equilibrium. That is, for a constant $C>0$ depending on the same parameters as $\lambda$ and on $\cc_0$, there holds
\begin{equation}\label{expconverge}
\sum_{i=1}^{N}\|c_i(t) - c_{i,\infty}\|_{L^1(\Omega)}^2 \leq C\,e^{-\lambda t}, \qquad \forall t>0.
\end{equation}
\end{theorem}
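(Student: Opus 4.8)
The plan is to split the statement into three essentially independent tasks: the functional entropy--entropy-dissipation inequality \eqref{main}, the existence of renormalised solutions, and the passage from \eqref{main} to the quantitative $L^1$-decay \eqref{expconverge}. Existence is not something I would reprove: it suffices to invoke Fischer's theory \cite{Fis15}, whose only structural inputs are the quasi-positivity of $\mathbf R$ and the entropy bound, both available for complex balanced systems. The real content is the inequality \eqref{main}; granting it, the convergence reduces to a soft Gronwall-plus-Csisz\'ar--Kullback argument, which I describe last.

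For the EEDI, the first step is the additive decomposition of the relative entropy into a spatial-fluctuation part and a reaction part,
\begin{equation*}
\mathcal E(\cc|\ww) = \mathcal E(\cc|\overline{\cc}) + \mathcal E(\overline{\cc}|\ww),
\end{equation*}
which follows by writing $c_i\log(c_i/c_{i,\infty}) = c_i\log(c_i/\overline{c_i}) + c_i\log(\overline{c_i}/c_{i,\infty})$ and using $\int_\Omega(c_i-\overline{c_i})\,dx = 0$. The diffusive part of $\mathcal D$ is then handled by the logarithmic Sobolev inequality on $\Omega$ with Neumann data: since $d_i\int_\Omega |\nabla c_i|^2/c_i\,dx = 4 d_i\int_\Omega|\nabla\sqrt{c_i}|^2\,dx$, the LSI yields $\sum_i d_i\int_\Omega |\nabla c_i|^2/c_i\,dx \ge C_{\mathrm{LSI}}\,\mathcal E(\cc|\overline{\cc})$, so the gradient dissipation alone dominates the fluctuation entropy. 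It therefore remains to bound the \emph{finite-dimensional} quantity $\mathcal E(\overline{\cc}|\ww)$ by the full dissipation, and this is where the complex balanced structure enters.

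The hard part is precisely this last reduction, because the reaction dissipation involves the \emph{averages of the monomials} $\overline{\cc^{y_r}}$, whereas $\mathcal E(\overline{\cc}|\ww)$ is built from the \emph{monomials of the averages}. I would proceed in two moves. First, since $\Psi$ is jointly convex on $(0,\infty)^2$ (its Hessian is positive semidefinite), Jensen's inequality lower-bounds each integral $\int_\Omega \Psi(\cc^{y_r}/\ww^{y_r};\cc^{y_r'}/\ww^{y_r'})\,dx$ by $|\Omega|\,\Psi(\overline{\cc^{y_r}}/\ww^{y_r};\overline{\cc^{y_r'}}/\ww^{y_r'})$, collapsing the reaction dissipation to a function of averaged monomials. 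Second --- and this is the genuine obstacle --- I must replace $\overline{\cc^{y_r}}$ by $\overline{\cc}^{\,y_r}$; the discrepancy is a polynomial in the fluctuations $c_i-\overline{c_i}$ which, after setting $\sqrt{c_i}=\overline{\sqrt{c_i}}+\delta_i$, is controlled by the $L^2$-norms $\|\delta_i\|$, themselves dominated by the already-harvested gradient dissipation through Poincar\'e's inequality, while the bound $\mathcal E(\overline{\cc}|\ww)\le K$ keeps the error constants uniform. Once everything is expressed in the homogeneous variables, I invoke (or separately establish by a compactness argument) the finite-dimensional EEDI $D(\cc)\ge\lambda_0 E(\cc|\ww)$ for the complex balanced ODE system: its proof rests on $D$ vanishing exactly at complex balance equilibria, on uniqueness of such an equilibrium $\ww$ under $\mathbb Q\overline{\cc}=M$ in the absence of boundary equilibria, and on a Taylor expansion at $\ww$ giving the local linear rate, upgraded to a global bound on the $K$-sublevel set by a normalisation/contradiction argument.

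Finally, granting \eqref{main}, the convergence is routine. Along a renormalised solution the masses are conserved and the entropy is non-increasing, so $\mathcal E(\overline{\cc}(t)|\ww)\le\mathcal E(\cc(t)|\ww)\le\mathcal E(\cc_0|\ww)=:K$; hence \eqref{main} holds with one constant $\lambda$ for all $t$. Inserting $\mathcal D\ge\lambda\mathcal E$ into the weak law \eqref{integrability} gives the integral inequality $\mathcal E(t)+\lambda\int_s^t\mathcal E(r)\,dr\le\mathcal E(s)$, from which an integrating-factor argument yields $\mathcal E(\cc(t)|\ww)\le\mathcal E(\cc_0|\ww)e^{-\lambda t}$ after letting $s\downarrow 0$. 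A Csisz\'ar--Kullback--Pinsker inequality, valid here with constants depending only on $\ww$ and the conserved masses, then converts this into $\sum_i\|c_i(t)-c_{i,\infty}\|_{L^1(\Omega)}^2\le C\,\mathcal E(\cc(t)|\ww)$, which combined with the exponential entropy bound is exactly \eqref{expconverge}. I expect the only delicate point to be the fluctuation-control step bridging $\overline{\cc^{y_r}}$ and $\overline{\cc}^{\,y_r}$, since it must be carried out with constants uniform over the low-regularity class of functions admitted in \eqref{main}.
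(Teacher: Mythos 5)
Your overall architecture (additivity of the relative entropy, logarithmic Sobolev inequality for the fluctuation part, a finite-dimensional Taylor-plus-uniqueness argument at $\ww$, then an integral Gronwall argument and a Csisz\'ar--Kullback--Pinsker inequality) matches the paper, and your first Jensen step is legitimate: $\Psi$ is indeed jointly convex, so $\int_\Omega \Psi\left(\cc^{y_r}/\ww^{y_r};\cc^{y_r'}/\ww^{y_r'}\right)dx \ge \Psi\left(\overline{\cc^{y_r}}/\ww^{y_r};\overline{\cc^{y_r'}}/\ww^{y_r'}\right)$. The genuine gap is exactly the step you flag as ``delicate'' and then dispatch in one sentence: replacing $\overline{\cc^{y_r}}$ by $\overline{\cc}^{\,y_r}$ with an error ``controlled by the $L^2$-norms $\|\delta_i\|$''. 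This is false in the generality claimed by the theorem. Writing $\sqrt{c_i}=\overline{\sqrt{c_i}}+\delta_i$, the discrepancy $\overline{\cc^{y_r}}-\overline{\cc}^{\,y_r}$ contains spatial integrals of products of the $\delta_i$ of degree up to $2\sum_i y_{r,i}$; since the admissible class consists of arbitrary nonnegative measurable functions (no $L^\infty$ bound, only bounds on the \emph{averages} through $\mathbb{Q}\overline{\cc}=M$ and $\mathcal{E}(\overline{\cc}|\ww)\le K$) and the space dimension $n$ is arbitrary, these higher moments are not controlled by $\sum_i\|\delta_i\|_{L^2}^2$, hence not by the gradient dissipation via Poincar\'e. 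For instance, already for a quadratic monomial one needs $\overline{C_i^4}-\overline{C_i^2}^2\lesssim\|\nabla C_i\|_{L^2}^2$, which fails for $n\ge 5$ (where $H^1\not\hookrightarrow L^4$): a spike of bounded Dirichlet energy makes the fourth moment arbitrarily large while all averages stay bounded. This obstruction is precisely why the paper does not take your route for Theorem \ref{EEDE}: instead of applying Jensen to $\Psi$, it convexifies the \emph{whole integrand} $\Phi=F+G$ of \eqref{tt1}--\eqref{322bis} and applies Jensen to $\widehat\Phi$, landing directly on $\widehat\Phi(\overline{\cc})-F(\overline{\cc})$, a function of the averaged concentrations, without ever comparing averages of monomials with monomials of averages. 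The price is the nontrivial Lemma \ref{lem:conv} (from \cite{MiHaMa14}), asserting $\widehat\Phi\equiv\Phi$ on a ball around $\ww$, which is what permits the reduction to the ODE liminf \eqref{h2} that your final step correctly mirrors.

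If you insist on your bridging step, the known repair is the constructive method of Section \ref{sec:constructive} (following \cite{FT15}): decompose $\Omega=S\cup S^c$ with $S=\{x:\,|\delta_i(x)|\le L \text{ for all } i\}$, Taylor-expand on $S$ where the remainder is pointwise bounded, and absorb the contribution of $S^c$ into the gradient term via Poincar\'e using $|S^c|\le L^{-2}\max_i\|\delta_i\|^2$. But note two consequences: this produces reaction terms in the averaged square roots $\overline{\sqrt{c_i}}$ rather than in $\overline{c_i}$, so your finite-dimensional ODE inequality does not apply as stated (the conservation law constrains $\overline{c_i}=\overline{C_i^2}$, which differs from $\overline{C_i}^{\,2}$ by $\|\delta_i\|^2$), and reconciling the two is exactly the system-specific Step 4 that the paper carries out only for the cyclic network of Theorem \ref{c_lem:eede} and explicitly declines to do for general complex balanced systems. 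The remaining parts of your proposal (existence via \cite{Fis15}, the integral Gronwall argument based on \eqref{integrability}, and Lemma \ref{CKPinequality}) are correct and identical to the paper's.
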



\begin{remark}\label{rmk11}
The statement of Thm. \ref{EEDE} is formulated for renormalised solutions, which is the only available concept of global solutions for general systems \eqref{e0}--\eqref{Nin}.
We recall here that a vector of measurable nonnegative components $\cc(x,t) = (c_1(x,t), \ldots, c_N(x,t))$ is said to be a renormalised solution when $c_i \in L^{\infty}_{loc}(\mathbb R_+; L^1(\Omega))$, $\sqrt{c_i} \in L^2_{loc}(\mathbb R_+; H^1(\Omega))$,  and when
 for any smooth function $\xi: \mathbb R_+^N \rightarrow \mathbb R$ with compactly supported derivative $\nabla \xi$ and every $\psi \in C^{\infty}(\overline{\Omega}\times \mathbb R_+)$, the equation 
\begin{equation}\label{renormalised}
	\begin{aligned}
		\int_{\Omega}\xi(\cc(\cdot, T))\psi(\cdot, T)&\,dx - \int_{\Omega}\xi(\cc_0)\psi(\cdot, 0)\,dx - \int_{0}^{T}\!\!\int_{\Omega}\xi(\cc)\frac{d}{dt}\psi\, dx dt\\
		= &-\sum_{i,j=1}^{N}\int_{0}^{T}\!\!\int_{\Omega}\psi \,\partial_{i}\partial_{j}\xi(\cc)(d_i\nabla c_i)\!\cdot\! \nabla c_j\, dx dt\\
		& - \sum_{i=1}^{N}\int_{0}^{T}\!\!\int_{\Omega}\partial_{i}\xi(\cc)(d_i\nabla c_i)\!\cdot\! \nabla \psi\, dx dt
		 + \sum_{i=1}^{N}\int_{0}^{T}\!\!\int_{\Omega}\partial_i\xi(\cc)R_i(\cc)\psi\, dx dt
	\end{aligned}
\end{equation}
holds for almost every $T>0$, with $\mathbf R(\cc) = (R_1(\cc), \ldots, R_N(\cc))$.
\par 
We notice that the assumed weak entropy
entropy-dissipation law \eqref{integrability}, which holds at the formal level, is in fact not easy to prove
in general for 
renormalised solutions, because of the lacking integrability of the 
reaction terms in the entropy dissipation functional \eqref{ED_PDE}. In  
 Thm. \ref{c_lem:eede} below, exponential equilibration of renormalised solutions (without any extra assumptions) in the case of a cyclic reaction network is obtained thanks to estimates which are uniform w.r.t. an approximation process.


For classical and sufficiently integrable weak solutions of \eqref{e0}--\eqref{Nin}, however, the entropy entropy-dissipation law \eqref{integrability} can be verified rigorously (even with an equality sign).  
This was done, for instance, in \cite{DFPV} for weak $(L\, log L)^2$-solutions of a system with quadratic nonlinearities. 
More recently, sufficiently integrable weak solutions were shown to exist for some special systems of the form \eqref{e0}--\eqref{Nin} when a (space dimension-dependent)  ``closeness'' assumption on the diffusion coefficients of the system is made, see e.g. 
\cite{CDF14,FL15}. Imposing an even stronger  ``closeness'' assumption on the diffusion coefficients allows in fact to show the existence of classical solutions, see e.g. \cite{FLS16}.
\end{remark}


\begin{remark} \label{r11}
In the above theorem, the constant $C$ in \eqref{expconverge} can be explicitly estimated.
 In fact, one can take $C = C_{CKP}^{-1}\, \mathcal{E}(\cc_0|\ww)$, where $C_{CKP}$ is the constant in a Csisz\'{a}r-Kullback-Pinsker inequality (cf. Lemma \ref{CKPinequality} below).
\par 
It is however not possible with our method used in Thm. \ref{EEDE} to estimate $\lambda$ in a completely explicit way. 
It is nevertheless possible to define the constant $\lambda$ by a finite-dimensional minimisation problem  (cf. the proof of Thm. \ref{EEDE}). In particular, one needn't use any    
 abstract infinite-dimensional compactness argument.
\end{remark}

The proof of Thm. \ref{EEDE} exploits an additivity property of the entropy functional, the Logarithmic Sobolev inequality (allowing to avoid the use of $L^{\infty}$-bounds, which are unavailable for general systems) and then uses a convexification argument presented in \cite{MiHaMa14}, which allows to further reduce the functional inequality \eqref{eede} into a finite dimensional inequality. The proof of {this} finite-dimensional inequality is based on Taylor expansions around the equilibrium (unfortunately
not yielding an explicit bound for $\lambda$, since the actual value of $\lambda>0$ might be obtained at states far from equilibrium).
\medskip 

The second main result on complex balanced systems without boundary equilibria is stated in Theorem \ref{c_lem:eede} below. 
Since the method of convexification used in Theorem \ref{EEDE} does not yield  explicit 
estimates for the convergence rate $\lambda$ (at least for general systems), we also propose a constructive method, inspired by \cite{DeFe08, FL15, FT15}, 
to prove an entropy entropy-dissipation estimate with computable constants. Another advantage of this method is that it can be applied uniformly w.r.t. the approximating systems used for constructing renormalised solutions, and thus yields 
exponential convergence to equilibrium of renormalised solutions without having to assume the 
weak entropy entropy-dissipation law \eqref{integrability} as in Thm. \ref{EEDE}.

The proposed method consists of four steps. The first three are proven for general complex balanced systems, but  
the last one, which crucially depends on the structure of the conservation laws, is rather a {\it proof of concept} which can be detailed only once a specific system is given. We demonstrate the details of 
this last step 
for the specific case of cycles of reactions connecting an arbitrary number of chemical substances:
\begin{equation*}
	\alpha_1\mathcal{A}_1 \xrightarrow{k_1} \alpha_2\mathcal{A}_2 \xrightarrow{k_2} \ldots \xrightarrow{k_{N-1}} \alpha_{N}\mathcal{A}_N \xrightarrow{k_N} \alpha_1\mathcal{A}_1 ,
\end{equation*}
where $\alpha_i \in {\mathbb{N}_{>0}}$, and $k_i>0$ for $i=1,2,\ldots,N$. 	The corresponding reaction-diffusion system writes as
\begin{equation}\label{c_cycle_0}
\begin{cases}
\partial_ta_1 - d_1\Delta a_1 = \alpha_1(-k_1a_1^{\alpha_1} + k_Na_N^{\alpha_N}), &x\in\Omega, \quad t>0,\\
\partial_ta_2 - d_2\Delta a_2 = \alpha_2(-k_2a_2^{\alpha_2} + k_1a_1^{\alpha_1}), &x\in\Omega, \quad t>0,\\
\cdots\\
\partial_ta_N - d_N\Delta a_N = \alpha_N(-k_Na_N^{\alpha_N} + k_{N-1}a_{N-1}^{\alpha_{N-1}}),\quad &x\in\Omega, \quad t>0,
\end{cases}
\end{equation}	
together with homogeneous Neumann boundary conditions and initial data \eqref{Nin}
(with $\cc$ replaced by ${\mathbf{a}} := (a_1,..,a_N)$). 
The system \eqref{c_cycle_0} has one mass conservation law corresponding to $\mathbb{Q}=\left(\frac{1}{\alpha_1}, \ldots, \frac{1}{\alpha_N}\right)$, i.e.
\begin{equation}\label{MassCons}
\sum_{i=1}^{N}\frac{1}{\alpha_i}\int_{\Omega}a_i(x,t)dx = M:= \sum_{i=1}^{N}\frac{1}{\alpha_i}\int_{\Omega}a_{i,0}(x)dx>0 \quad \text{ for all } \quad t>0,
\end{equation}
where the initial total mass $M$ is assumed to be strictly positive. As a consequence, the system \eqref{c_cycle_0} has a unique strictly positive complex balance equilibrium
 ${\mathbf{a}}_{\infty} = (a_{1,\infty}, \ldots, a_{N,\infty})$ (cf. Lemma \ref{c_equilibrium}). 
\par 
 By denoting ${\mathbf{a}} = (a_1, a_2, \ldots, a_N)$, ${\mathbf{a}}_0 = (a_{1,0}, \ldots, a_{N,0})$
 and by using the periodic notation $a_{N+1} := a_1$, $a_{N+1,\infty} := a_{1,\infty}$ and $a_{N+1,0} := a_{1,0}$, 
 the quantities \eqref{FreeEnergy_PDE} and \eqref{ED_PDE} write in this specific case as
\begin{equation}\label{ee1}
	\mathcal{E}({\mathbf{a}}|{\mathbf{a}}_{\infty}) = \sum_{i=1}^{N}\int_{\Omega}\left(a_i\log{\frac{a_i}{a_{i,\infty}}} - a_i + a_{i,\infty}\right)dx ,
\end{equation}
and
\begin{equation}\label{ee2}
	\mathcal{D}({\mathbf{a}}) = \sum_{i=1}^{N}d_i\int_{\Omega}\frac{|\nabla a_i|^2}{a_i}dx + \sum_{i=1}^{N}k_ia_{i,\infty}^{\alpha_i}\int_{\Omega}\Psi\left(\frac{a_i^{\alpha_i}}{a_{i,\infty}^{\alpha_i}}; \frac{a_{i+1}^{\alpha_{i+1}}}{a_{{i+1},\infty}^{\alpha_{i+1}}} \right)dx,
\end{equation}
where we recall that $\Psi$ is defined by \eqref{psi}.
\medskip

We now state the 
\medskip

	\begin{theorem}[Explicit convergence to equilibrium for a cyclic reaction]\label{c_lem:eede}\hfill\\
Let $\Omega$ be a bounded smooth ($C^2$) domain of ${\mathbb R}^n$, 
 $d_i >0$,
 $\alpha_i \in  {\mathbb{N}_{>0}}$, and $k_i>0$ for all $i=1,2,\ldots,N$.
 We finally fix a strictly positive initial mass $M>0$. 	
 	
Then, there exists a constant $\lambda>0$ which can be {\normalfont explicitly estimated} in terms of 
$\Omega$,
 $d_i$, $M$, 
 $\alpha_i$ and $k_i$,
    such that for any nonnegative measurable functions $a_i:\Omega\rightarrow \mathbb{R}_+$, ($i=1,2,\ldots, N$) satisfying the mass conservation law $\sum_{i=1}^N\frac{1}{\alpha_i} \int_{\Omega} a_i(x)\, dx = M$, we have
\begin{equation*}
\mathcal{D}({\mathbf{a}}) \geq \lambda\, \mathcal{E}({\mathbf{a}}|{\mathbf{a}}_{\infty}),
\end{equation*}
where ${\mathbf{a}}_{\infty}$ is the unique strictly
 positive complex balance equilibrium determined by $M,$
  and $\mathcal{E}({\mathbf{a}}|{\mathbf{a}}_{\infty})$ and $\mathcal{D}({\mathbf{a}})$ are defined in \eqref{ee1} and \eqref{ee2}, respectively.
		\par 
As a consequence, for any nonnegative initial data $\mathbf{a}_0$ with positive mass and finite relative entropy, i.e. $\sum_{i=1}^N \frac{1}{\alpha_i}\int_{\Omega} a_{i,0}(x)\, dx= M>0$ and
$\mathcal{E}({\mathbf{a}}_0| {\mathbf{a}}_{\infty}) < +\infty$,  the renormalised solutions of system \eqref{c_cycle_0} (as constructed in \cite{Fis15} and defined in the same manner as in \eqref{renormalised}), \eqref{Nin} (with ${\mathbf{c}}$ replaced by ${\mathbf{a}}$)
 converge exponentially fast in $L^1$-norm to the equilibrium ${\mathbf{a}}_{\infty}$:
		\begin{equation*}
			\sum_{i=1}^{N}\|a_i(t) - a_{i,\infty}\|_{L^1(\Omega)}^2 \leq C_{CKP}^{-1}e^{-\lambda t}\mathcal{E}(\mathbf a_0|\mathbf a_{\infty}) \quad \text{ for all } \quad t>0,
		\end{equation*}
		where $C_{CKP}$ is defined in Lemma \ref{CKPinequality} below.
	\end{theorem}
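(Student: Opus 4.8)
The plan is to prove the finite-dimensional entropy--entropy dissipation inequality $\mathcal{D}(\mathbf{a}) \geq \lambda\,\mathcal{E}(\mathbf{a}|\mathbf{a}_{\infty})$ with an explicit $\lambda$ by the constructive four-step scheme, reducing the PDE functional inequality to an algebraic inequality on the spatial averages that is closed using the single mass conservation law. First I would exploit the \emph{additivity} of the relative entropy. Writing $\overline{a_i} := \tfrac1{|\Omega|}\int_\Omega a_i\,dx$ and splitting $\log(a_i/a_{i,\infty}) = \log(a_i/\overline{a_i}) + \log(\overline{a_i}/a_{i,\infty})$, a direct computation gives the orthogonal decomposition
\begin{equation*}
\mathcal{E}(\mathbf{a}|\mathbf{a}_{\infty}) = \sum_{i=1}^N\int_\Omega a_i\log\frac{a_i}{\overline{a_i}}\,dx + |\Omega|\sum_{i=1}^N\Big(\overline{a_i}\log\frac{\overline{a_i}}{a_{i,\infty}} - \overline{a_i} + a_{i,\infty}\Big).
\end{equation*}
The first (spatial) sum I would control by the diffusion part of $\mathcal{D}$ via the logarithmic Sobolev inequality on $\Omega$ with Neumann conditions: $\int_\Omega \frac{|\nabla a_i|^2}{a_i}\,dx = 4\int_\Omega|\nabla\sqrt{a_i}|^2\,dx \geq C_{\mathrm{LSI}}(\Omega)\int_\Omega a_i\log(a_i/\overline{a_i})\,dx$, so that a fixed fraction of $\sum_i d_i\int_\Omega|\nabla a_i|^2/a_i\,dx$ already dominates a multiple of the spatial entropy. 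This reduces the problem to bounding the \emph{averaged} relative entropy $\mathcal{E}(\overline{\mathbf{a}}|\mathbf{a}_{\infty}) := \sum_i(\overline{a_i}\log(\overline{a_i}/a_{i,\infty}) - \overline{a_i} + a_{i,\infty})$ by the reaction part of $\mathcal{D}$.

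For the reaction part I would use the elementary bound $\Psi(x;y) \geq (\sqrt{x}-\sqrt{y})^2$, which with $x_i := (a_i/a_{i,\infty})^{\alpha_i}$ and the flux identity $k_i a_{i,\infty}^{\alpha_i} = K$ (constant around the cycle, by the complex-balance condition \eqref{ComplexBalance}) yields the pointwise estimate $\sum_i k_i a_{i,\infty}^{\alpha_i}\Psi(x_i;x_{i+1}) \geq K\sum_i\big((a_i/a_{i,\infty})^{\alpha_i/2} - (a_{i+1}/a_{i+1,\infty})^{\alpha_{i+1}/2}\big)^2$. Integrating and applying Jensen's inequality $\tfrac1{|\Omega|}\int_\Omega(f-g)^2\,dx \geq (\overline{f}-\overline{g})^2$, I would pass to the averages; the discrepancy between $\overline{a_i^{\alpha_i/2}}$ and $(\overline{a_i})^{\alpha_i/2}$ is a measure of spatial oscillation of $\sqrt{a_i}$ and is reabsorbed, via the Poincar\'e inequality, into the still-available remaining fraction of the diffusion dissipation. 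It then remains to prove the purely finite-dimensional inequality
\begin{equation*}
K\sum_{i=1}^N\big((\overline{a_i}/a_{i,\infty})^{\alpha_i/2} - (\overline{a_{i+1}}/a_{i+1,\infty})^{\alpha_{i+1}/2}\big)^2 \;\geq\; \lambda_0\,\mathcal{E}(\overline{\mathbf{a}}|\mathbf{a}_{\infty})
\end{equation*}
for all $\overline{a_i}\geq 0$ subject to the constraint $\sum_i \overline{a_i}/\alpha_i = M$.

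This last step is the crux and is where the conservation law enters decisively. Since $M$ is conserved, each $\overline{a_i}$ is bounded by $\alpha_i M$, so the entropy is comparable to the quadratic quantity $\sum_i(\sqrt{\overline{a_i}}-\sqrt{a_{i,\infty}})^2$ with a constant depending only on $M$ and $\mathbf{a}_{\infty}$. Setting $w_i := (\overline{a_i}/a_{i,\infty})^{\alpha_i/2}$, the left-hand side is a discrete Dirichlet energy $\sum_i(w_i-w_{i+1})^2$ on the cycle graph; a discrete Poincar\'e inequality controls the oscillation of the $w_i$ about their mean, while the mean itself is pinned by the conservation law, which in these variables reads $\sum_i \tfrac{a_{i,\infty}}{\alpha_i}\,w_i^{2/\alpha_i} = M$ and admits $w_i\equiv 1$ as its unique constant solution (the left side being strictly increasing in the common value). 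Hence the only configuration annihilating the Dirichlet energy on the constraint set is the equilibrium $w_i=1$. The main obstacle is to keep $\lambda_0$ \emph{explicit} rather than invoking compactness: I would argue by cases, near equilibrium by a controlled Taylor expansion whose Hessian is nondegenerate precisely because the cyclic connectivity plus the single linear constraint leave no nontrivial null direction, and far from equilibrium by bounding the entropy through the conserved mass while the dissipation remains bounded below, finally combining the power-comparison estimates relating $w_i-1$ to $\sqrt{\overline{a_i}}-\sqrt{a_{i,\infty}}$ (legitimate on the $M$-bounded range) to conclude with a computable constant.

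Finally, the convergence statement follows from the inequality just proved by Gronwall's lemma: along the flow $\tfrac{d}{dt}\mathcal{E}(\mathbf{a}(t)|\mathbf{a}_{\infty}) = -\mathcal{D}(\mathbf{a}(t)) \leq -\lambda\,\mathcal{E}(\mathbf{a}(t)|\mathbf{a}_{\infty})$ yields $\mathcal{E}(\mathbf{a}(t)|\mathbf{a}_{\infty})\leq e^{-\lambda t}\mathcal{E}(\mathbf{a}_0|\mathbf{a}_{\infty})$, and the Csisz\'ar--Kullback--Pinsker inequality (Lemma~\ref{CKPinequality}) converts this into the asserted $L^1$ decay. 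Since every constant above is explicit and is obtained uniformly with respect to the approximation parameters, the argument transfers directly to the renormalised solutions of \cite{Fis15} without having to postulate the weak entropy entropy-dissipation law \eqref{integrability}.
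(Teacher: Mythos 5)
You follow the paper's four-step skeleton (entropy additivity plus log-Sobolev, the elementary bound $\Psi(x;y)\ge(\sqrt{x}-\sqrt{y})^2$, reduction to spatial averages, and a finite-dimensional inequality closed by the single conservation law), and your Steps 1--2 as well as the final Gronwall/Csisz\'ar--Kullback--Pinsker part agree with the paper. The genuine gap is in your passage to averages. After Jensen you hold the quantities $\overline{f_i}:=\overline{a_i^{\alpha_i/2}}/a_{i,\infty}^{\alpha_i/2}$ (\emph{averages of powers}), while your finite-dimensional inequality is stated for $w_i:=(\overline{a_i}/a_{i,\infty})^{\alpha_i/2}$ (\emph{powers of averages}), and you claim the discrepancy is ``a measure of spatial oscillation of $\sqrt{a_i}$'' that Poincar\'e absorbs into the leftover diffusion. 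That mechanism cannot work for $\alpha_i\ge 3$: Poincar\'e and the diffusion part of $\mathcal D$ control only the second moment $\|\sqrt{a_i}-\overline{\sqrt{a_i}}\|_{L^2}^2$, whereas $\overline{a_i^{\alpha_i/2}}-(\overline{a_i})^{\alpha_i/2}$ involves the $\alpha_i$-th moment of $\sqrt{a_i}$. Quantitatively, adding to $a_i$ a spike of height $H$ on a ball of radius $\rho$ with $H^{\alpha_i/2}\rho^n\sim 1$ shifts $\overline{f_i}$ by order one, while its mass $\sim H\rho^n=H^{1-\alpha_i/2}$, its contribution to $\|\sqrt{a_i}-\overline{\sqrt{a_i}}\|_{L^2}^2$, and its diffusion cost $\sim H\rho^{n-2}=H^{1-\alpha_i(n-2)/(2n)}$ all tend to $0$ as $H\to\infty$ once $\alpha_i>2n/(n-2)$ (for instance all $\alpha_i=4$ in dimension $n=6$). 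Starting from a spatially constant non-equilibrium state on the mass simplex and spiking each species so that all $\overline{f_i}$ coincide, one obtains states for which $\sum_i(\overline{f_i}-\overline{f_{i+1}})^2$ and the diffusion dissipation are arbitrarily small while $\mathcal E(\overline{\mathbf{a}}|\mathbf{a}_\infty)$ and $\sum_i(w_i-w_{i+1})^2$ stay bounded below. Hence your intermediate inequality is false, not merely unproven, and the reduction never reaches your finite-dimensional step.

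This is precisely the difficulty that the paper's Step 3, following \cite{FT15}, is designed to overcome, and it requires retaining pointwise information that Jensen destroys: one decomposes $\Omega=S\cup S^{c}$ with $S=\{x:|\delta_i(x)|\le L \text{ for all } i\}$, $\delta_i=\sqrt{a_i}-\overline{\sqrt{a_i}}$. On $S$ a Taylor expansion of the reaction differences around the averages of the \emph{square roots} $\overline{\sqrt{a_i}}$ (which, unlike $\overline{a_i^{\alpha_i/2}}$, are bounded by the conserved mass) has remainders controlled by $\|\delta_i\|_{L^2}^2$, hence by diffusion; on $S^{c}$ one makes no expansion at all, using instead $|S^{c}|\le \|\delta\|^2/L^2$ together with the a priori boundedness of the averaged reaction terms. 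This yields the inequality \eqref{c_e6} in the variables $(\overline{\sqrt{a_i}})^{\alpha_i}$, after which the paper's Step 4 (the ansatz $\overline{a_i}=a_{i,\infty}(1+\mu_i)^2$, the sign argument of Lemma \ref{finite_cycle}, and the case distinction $\overline{a_i}\gtrless\varepsilon^2$) closes the estimate with explicit constants; your discrete-Poincar\'e/Taylor treatment of the finite-dimensional step would be an acceptable variant, but only once a correct Step 3 delivers it. Finally, your claim that the estimate ``transfers directly'' to renormalised solutions is also too quick: the approximating systems of \cite{Fis15} carry the truncated reaction term $\mathbf{R}/(1+\varepsilon|\mathbf{R}|)$, so the entropy dissipation itself changes with $\varepsilon$, and the paper obtains an $\varepsilon$-uniform constant precisely because on the good set $S$ one has the pointwise bound $|\mathbf{R}(\cc^{\varepsilon})|\le C(L,M)$ --- again an output of the domain decomposition --- before passing to the limit in the decay estimate via a.e.\ convergence and convexity of the entropy.
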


%
\bigskip 

{\bf The second part of this paper}, presented in Section \ref{sec:boundary}, is devoted to the study of complex balanced systems featuring boundary equilibria. 
We recall that even in the ODE setting \eqref{e00}, 
the convergence to the unique strictly positive complex balance equilibrium for general reaction networks with boundary equilibria involves the study of  the Global Attractor Conjecture. 

Moreover, we emphasise that due to the presence of boundary equilibria, 
we cannot expect an entropy entropy-dissipation estimate of the form  \eqref{main} to hold with the same generality 
as in Thm. \ref{EEDE}. 
In fact, it is easy to verify (see also Section \ref{sec:boundary}) that the entropy dissipation $\mathcal{D}(\cc)$ tends to zero for sequences of constant states, 
which converge to a boundary equilibrium and satisfy the mass conservation laws, yet the relative entropy to the positive complex balance equilibrium $\mathcal{E}(\cc|\ww)$ remains, of course, far from $0$ for such states. 
\medskip
  
In Section \ref{sec:boundary}, we prove two quite general results concerning the convergence to the unique strictly positive 
complex balance equilibrium 
for the following  two reaction-diffusion networks with boundary equilibria:
\begin{equation*}
\begin{tikzpicture}[baseline=(current  bounding  box.center)]
\node (a) {$2\mathcal{A}$} node (b) at (3,0) {$\mathcal{A} + \mathcal{B}$};
\draw[arrows=->] ([yshift=.5mm]a.east) --node[above]{\scalebox{.8}[.8]{$k_1$}} ([yshift=.5mm]b.west);
\draw[arrows=->] ([yshift=-.5mm]b.west) --node[below]{\scalebox{.8}[.8]{$k_2$}} ([yshift=-.5mm]a.east);
\end{tikzpicture}
\hspace{2cm}
\text{ and } 
\hspace{2cm}
\begin{tikzpicture} [baseline=(current  bounding  box.center)]
\node (a) {$\mathcal A$} node (b) at (2,0) {$\mathcal B+\mathcal C$} node (c) at (0,-1.5) {$2\mathcal B$};
\draw[arrows=->] ([xshift =0.5mm]a.east) -- node [above] {\scalebox{.8}[.8]{$k_1$}} ([xshift =-0.5mm]b.west);
\draw[arrows=->] ([yshift=0.5mm]c.north) -- node [left] {\scalebox{.8}[.8]{$k_3$}} ([yshift=-0.5mm]a.south);
\draw[arrows=->] ([xshift =-0.5mm,yshift=-0.5mm]b.south) -- node [right] {\scalebox{.8}[.8]{$k_2$}} ([yshift =0.5mm]c.east);
\end{tikzpicture}
\end{equation*}
The first system is described by the following $2\times 2$ mass action law reaction-diffusion system
\begin{equation}\label{2x2_0}
	\begin{cases}
		a_t - d_a\Delta a = -a^2 + ab, &x\in\Omega, \quad t>0,\\
		b_t - d_b\Delta b = a^2 - ab, &x\in\Omega, \quad t>0,\\
		\nabla a \cdot \nu = \nabla b \cdot \nu = 0, &x\in\partial\Omega, \quad t>0,\\
		a(x, 0) = a_0(x), \quad b(x,0) = b_0(x),\quad &x\in\Omega,
	\end{cases}
\end{equation}
where we assume normalised reaction rate constants $k_1 = k_2 = 1$ (without loss of generality since different rates can be recovered thanks to a suitable scaling).
\medskip

It is easily checked that the above $2\times 2$ system is complex balanced, and even 
detailed balanced, but features a boundary equilibrium.
 We nevertheless are able to prove
for this system the

\begin{proposition}[Convergence to detailed balance equilibrium for a $2\times 2$ system with boundary equilibrium]\label{2x2_theorem}\hfill\\
Let $\Omega$ be a bounded smooth ($C^2$) domain of ${\mathbb R}^n$ and $d_a,d_b>0$.
Assume that the initial data $(a_0, b_0) \in L^{\infty}(\Omega)\times L^{\infty}(\Omega)$ satisfy  $0<\varepsilon^2 \leq a_0(x), b_0(x) \leq \Lambda<+\infty$ (for a.a. $x\in\Omega$). 
	\par 
Then, the unique global classical ($C^2$ for $t>0$) solution $(a, b)$ to \eqref{2x2_0} 
conserves the total mass
\begin{equation*}
\int_{\Omega} (a(x,t) + b(x,t))\,dx = M := \int_{\Omega} (a_0(x) + b_0(x))\,dx >0\qquad \text{for all} \quad t>0,
\end{equation*}	
and 
converges exponentially fast to the unique strictly positive detailed balance equilibrium
	$(a_{\infty}, b_{\infty})$:
	\begin{equation*}\label{ExpoDecay2x2}
	\|a(\cdot,t) - a_{\infty}\|_{L^2(\Omega)}^2 + \|b(\cdot,t) - b_{\infty}\|_{L^2(\Omega)}^2 \leq e^{-\lambda_\varepsilon t}(\|a_0-a_{\infty}\|_{L^2(\Omega)}^2 + \|b_0 - b_{\infty}\|_{L^2(\Omega)}^2) \quad \text{ for all } \quad t>0,
	\end{equation*}
	where $\lambda_{\varepsilon}$ is an {\normalfont explicit constant} depending only on $\Omega, M, d_a, d_b$ and $\varepsilon$. Moreover, one can choose $\lambda_\varepsilon = O(\varepsilon^2)$ as $\varepsilon\rightarrow 0$.
\end{proposition}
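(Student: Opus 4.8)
The plan is to combine an invariant-region argument for global well-posedness and uniform bounds with a direct $L^2$-energy estimate, the crucial new ingredient being a uniform-in-time strictly positive lower bound that keeps the trajectory away from the boundary equilibrium $\{a=0\}$. First I would check that the coordinate box $[0,\Lambda]^2$ is invariant for \eqref{2x2_0}: on the faces $a=0$ and $b=0$ the reaction vector points inward ($R_a=0$ and $R_b=a^2\ge 0$), while on $a=\Lambda$ one has $R_a=a(b-a)\le 0$ since $b\le\Lambda$, and symmetrically on $b=\Lambda$. Since the diffusion matrix is diagonal, the coordinate normals are eigenvectors and the Chueh--Conley--Smoller criterion applies, so $0\le a,b\le\Lambda$ for all times. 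As the nonlinearity is smooth and bounded on this box, standard parabolic theory yields a unique global classical solution, together with the conservation law $\int_\Omega(a+b)\,dx=M$, whence the equilibrium is $a_\infty=b_\infty=M/(2|\Omega|)$.

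The heart of the matter, and the place where the boundary equilibrium must be defeated, is a uniform positive lower bound. The key structural observation is that in each reaction term it is always the \emph{smaller} species that is pushed up: whenever $a\le b$ one has $R_a=a(b-a)\ge 0$, and whenever $b\le a$ one has $R_b=a(a-b)\ge 0$. I would therefore track $\varphi(t):=\min_{x\in\overline\Omega}\min\{a(x,t),b(x,t)\}$ and show it is non-decreasing. At a point realising the minimum, the component attaining it has non-negative Laplacian (interior minimum) or vanishing normal derivative (Neumann boundary, via Hopf), and its reaction term is $\ge 0$ by the observation above; a comparison/maximum-principle argument then gives $a(x,t),b(x,t)\ge\varphi(t)\ge\varphi(0)\ge\varepsilon^2$ for all $t>0$. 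The care required at the interface $\{a=b\}$ and at the kink of the minimum is the main technical obstacle, but the sign structure makes the conclusion robust. Crucially, this lower bound is exactly of size $\varepsilon^2$, which is what will force $\lambda_\varepsilon=O(\varepsilon^2)$.

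Next I would set up the $L^2$-energy. Testing the equations by $a-a_\infty$ and $b-b_\infty$ and using the Neumann condition, $R_b=-R_a$, and $(a-a_\infty)-(b-b_\infty)=a-b$ (since $a_\infty=b_\infty$), one obtains the dissipation identity
\[ \tfrac12\tfrac{d}{dt}\Big(\|a-a_\infty\|_{L^2(\Omega)}^2+\|b-b_\infty\|_{L^2(\Omega)}^2\Big)=-d_a\!\int_\Omega|\nabla a|^2-d_b\!\int_\Omega|\nabla b|^2-\int_\Omega a\,(a-b)^2\,dx. \]
The reaction contribution $\int_\Omega a(a-b)^2\,dx$ is manifestly non-negative, so the relative energy is non-increasing; it remains to make this quantitative.

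It then suffices to dominate the relative energy by this dissipation. Using Step 2, $\int_\Omega a(a-b)^2\ge\varepsilon^2\|a-b\|_{L^2(\Omega)}^2$. Splitting $a-a_\infty=(a-\overline a)+(\overline a-a_\infty)$ with $\overline a=|\Omega|^{-1}\int_\Omega a\,dx$ (and similarly for $b$), the Poincaré--Wirtinger inequality controls the oscillation parts by $\|\nabla a\|_{L^2}^2$ and $\|\nabla b\|_{L^2}^2$, while mass conservation gives $\overline a-a_\infty=-(\overline b-b_\infty)=\tfrac12(\overline a-\overline b)$, so the constant parts are controlled by $(\overline{a-b})^2\le|\Omega|^{-1}\|a-b\|_{L^2}^2$. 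Combining these yields $d_a\|\nabla a\|_{L^2}^2+d_b\|\nabla b\|_{L^2}^2+\varepsilon^2\|a-b\|_{L^2}^2\ge\tfrac{\lambda_\varepsilon}{2}\big(\|a-a_\infty\|_{L^2}^2+\|b-b_\infty\|_{L^2}^2\big)$ with $\lambda_\varepsilon=\min\{2\min(d_a,d_b)/C_P,\,4\varepsilon^2\}=O(\varepsilon^2)$, where $C_P$ is the Poincaré constant of $\Omega$. Inserting this into the energy identity gives $\tfrac{d}{dt}\mathcal L\le-\lambda_\varepsilon\mathcal L$ for $\mathcal L=\|a-a_\infty\|_{L^2}^2+\|b-b_\infty\|_{L^2}^2$, and Gronwall's lemma produces the claimed exponential decay with the asserted dependence of $\lambda_\varepsilon$ on $\Omega,M,d_a,d_b,\varepsilon$ only. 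I expect the uniform positivity in Step 2 to be the sole genuine difficulty; once it is secured, the remaining estimates are routine.
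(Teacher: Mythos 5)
Your proposal is correct, and it follows the same overall route as the paper: propagation of the positive lower and upper $L^\infty$ bounds, the identical $L^2$-energy dissipation identity $\frac12\frac{d}{dt}\bigl(\|a-a_\infty\|^2+\|b-b_\infty\|^2\bigr)=-d_a\|\nabla a\|^2-d_b\|\nabla b\|^2-\int_\Omega a(a-b)^2\,dx$, a quantitative functional inequality with $\lambda_\varepsilon=O(\varepsilon^2)$, and Gronwall. The differences lie in execution. For the bounds, the paper argues via a comparison principle (a first-touching parabolic minimum-principle argument, which is essentially your tracking of $\min\{a,b\}$) and also offers an alternative proof through the monotonicity of $\int_\Omega\bigl(a^{-p}+b^{-p}\bigr)dx$ followed by $p\to\infty$; your invariant-region formulation is an equally valid packaging of the same structure, and you could streamline it by noting that the very same sign checks you performed for $[0,\Lambda]^2$ show that the box $[\varepsilon^2,\Lambda]^2$ is itself invariant (on $a=\varepsilon^2$ one has $R_a=\varepsilon^2(b-\varepsilon^2)\ge0$, on $b=\varepsilon^2$ one has $R_b=a(a-\varepsilon^2)\ge0$), which spares you the delicate treatment of the kink of the minimum that you flagged as the main technical obstacle. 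The genuine added value of your write-up is the last step: where the paper justifies the inequality $d_a\|\nabla a\|^2+d_b\|\nabla b\|^2+\varepsilon^2\|a-b\|^2\ge\frac{\lambda_\varepsilon}{2}\bigl(\|a-a_\infty\|^2+\|b-b_\infty\|^2\bigr)$ by citing earlier entropy entropy-dissipation estimates for detailed balance systems, you prove it directly and elementarily from Poincar\'e--Wirtinger together with the mass-conservation identity $\bar a-a_\infty=-(\bar b-b_\infty)=\frac12\,\overline{(a-b)}$, obtaining explicit constants of the form $\lambda_\varepsilon\sim\min\{\min(d_a,d_b)/C_P,\,\varepsilon^2\}$; this makes the proof self-contained, shows that $\lambda_\varepsilon$ is in fact independent of $M$, and exhibits the $O(\varepsilon^2)$ behaviour transparently.
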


This proposition can be used to get a result in the case when the initial data are not bounded below:

\begin{corollary}\label{2x2cor}
Let $\Omega$, $d_a$, $d_b$ be as in Prop. \ref{2x2_theorem}. 
Assume that the non-negative initial data $(a_0, b_0)$ belong to $C(\bar{\Omega})\times C(\bar{\Omega})$, and that $a_0$ is not the trivial initial state $0$.


Then, the unique global classical ($C^2$ for $t>0$) solution $(a, b)$ to \eqref{2x2_0} 
conserves the total mass (as in Prop. \ref{2x2_theorem}) and 
converges exponentially fast to the unique strictly positive detailed balance equilibrium, that is, for some $C, \lambda>0$,
\begin{equation*}\label{ExpoDecay2x2bis}
	\|a(\cdot,t) - a_{\infty}\|_{L^2(\Omega)}^2 + \|b(\cdot,t) - b_{\infty}\|_{L^2(\Omega)}^2 \leq C\, e^{-\lambda \, t} \quad \text{ for all } \quad t>0.
	\end{equation*}
\smallskip

Note that on the other hand, if $a_0\equiv0$ on $\Omega$ (so that the initial total mass is concentrated in $b_0$),
then the classical solutions to \eqref{2x2_0} converge exponentially fast to the boundary equilibrium $(0,M)$.
\end{corollary}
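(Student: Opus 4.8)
The strategy is to reduce Corollary \ref{2x2cor} to Proposition \ref{2x2_theorem} by exploiting the regularising and positivity-improving effect of the parabolic flow. Although the data $(a_0,b_0)$ are only assumed nonnegative and continuous with $a_0\not\equiv 0$, I would show that the (classical, nonnegative) solution becomes instantaneously bounded below by a strictly positive constant for $t>0$, so that the evolution may be restarted at any fixed positive time from strictly positive, bounded data, to which Proposition \ref{2x2_theorem} directly applies. Throughout, nonnegativity of $a,b$ is guaranteed by the quasi-positivity of the reaction terms ($-a^2+ab=0$ at $a=0$ and $a^2-ab=a^2\ge 0$ at $b=0$), and boundedness at any fixed time follows from the classical regularity together with continuity on the compact set $\bar\Omega$.

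First I would establish strict positivity of $a$. Writing the first equation of \eqref{2x2_0} as the linear parabolic equation $a_t-d_a\Delta a=(b-a)\,a$ with bounded coefficient $b-a$, the parabolic strong maximum principle, combined with the Hopf lemma adapted to the homogeneous Neumann boundary condition, yields $a(x,t)>0$ for all $x\in\bar\Omega$ and $t>0$ (using $a_0\ge 0$, $a_0\not\equiv 0$). Fixing $t_1>0$, continuity of $a$ on the compact set $\bar\Omega\times[t_1,2t_1]$ then provides a constant $\delta>0$ with $a\ge\delta$ there. I would next transfer this positivity to $b$: rewriting the second equation as $b_t-d_b\Delta b+a\,b=a^2\ge\delta^2>0$ on $[t_1,2t_1]$, the strictly positive source forces $b(x,t)>0$ for $t>t_1$ (again by the strong maximum principle), so that $b\ge\delta'>0$ on $\bar\Omega\times[\tfrac{3}{2}t_1,2t_1]$ for some $\delta'>0$. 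Setting $t_0:=2t_1$ and $\varepsilon^2:=\min(\delta,\delta')$, $\Lambda:=\max(\|a(\cdot,t_0)\|_{L^\infty},\|b(\cdot,t_0)\|_{L^\infty})$, we obtain $0<\varepsilon^2\le a(\cdot,t_0),b(\cdot,t_0)\le\Lambda<\infty$, with the same conserved mass $M$.

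Applying Proposition \ref{2x2_theorem} to the solution restarted at time $t_0$ from the data $(a(\cdot,t_0),b(\cdot,t_0))$ gives, for all $t\ge t_0$,
\begin{equation*}
\|a(\cdot,t)-a_{\infty}\|_{L^2(\Omega)}^2+\|b(\cdot,t)-b_{\infty}\|_{L^2(\Omega)}^2
\le e^{-\lambda_\varepsilon(t-t_0)}\bigl(\|a(\cdot,t_0)-a_{\infty}\|_{L^2(\Omega)}^2+\|b(\cdot,t_0)-b_{\infty}\|_{L^2(\Omega)}^2\bigr).
\end{equation*}
Since the left-hand side is bounded on the compact interval $(0,t_0]$ (by continuity of the classical solution up to $t=0$), one may enlarge the prefactor to absorb this interval and obtain the claimed bound $C\,e^{-\lambda t}$ for all $t>0$ with $\lambda=\lambda_\varepsilon>0$. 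Finally, in the degenerate case $a_0\equiv 0$, the function $a\equiv 0$ solves the first equation with the given initial datum, so by uniqueness $a(x,t)\equiv 0$; the second equation then reduces to the heat equation $b_t-d_b\Delta b=0$ with homogeneous Neumann boundary condition, whose solution converges exponentially in $L^2(\Omega)$ to its conserved spatial average, i.e. to the boundary equilibrium with $a_\infty=0$, the rate being the first nonzero Neumann eigenvalue (Poincaré inequality).

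The main point requiring care is the quantitative strict positivity of $b$: because $b_0$ may vanish identically, its lower bound cannot be inherited from the data but must be \emph{generated} by the reaction source $a^2$. This is precisely why positivity of $a$ must be established first and then propagated to $b$ through the positive forcing in its equation; once both components admit a uniform positive lower bound at a single positive time, the remainder is a routine application of Proposition \ref{2x2_theorem}.
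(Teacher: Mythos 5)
Your proof is correct and follows essentially the same route as the paper: instantaneous strict positivity of $a$ via the strong minimum principle with homogeneous Neumann boundary conditions, transfer of that positivity to $b$, restarting the flow at a positive time $t_0$ so that Proposition \ref{2x2_theorem} applies (absorbing the initial interval into the prefactor), and reduction of the degenerate case $a_0\equiv 0$ to the heat equation with Poincar\'{e}--Wirtinger decay. The only (harmless) deviation is in the positivity of $b$: the paper distinguishes the case $\int_{\Omega}b_0\,dx>0$ (handled exactly as for $a$) from $b_0$ of zero mass, where it first shows $\frac{d}{dt}\int_{\Omega}b\,dx\,\big|_{t=0}\geq\int_{\Omega}a_0^2\,dx>0$ and then invokes the minimum principle, whereas you obtain $b>0$ in a unified way from the strictly positive forcing $a^2\geq\delta^2$ in the equation for $b$ --- both arguments are valid.
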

\medskip 

In the proof of Prop. \ref{2x2_theorem},
 the particular structure of 
 system \eqref{2x2_0} allows to show via a comparison principle argument that solutions 
propagate 
the assumed positive lower and upper $L^{\infty}$-bounds of the initial data.
As a consequence, the solutions remain bounded away from the boundary equilibrium.
Also the proof of Cor. \ref{2x2cor} applies the minimum principle. 
However, such comparison principle arguments (like the propagation of lower and upper $L^{\infty}$-bounds) 
are no longer true for more general systems such as the second example (described in the following).
\medskip 

Applying the mass action law to the second example leads to the below $3\times 3$ reaction-diffusion system
\begin{equation}\label{3x3_0}
	\begin{cases}
		a_t - d_a\Delta a = - k_1a + k_3b^2, &x\in\Omega, \quad t>0,\\
		b_t - d_b\Delta b = k_1a + k_2bc - 2k_3b^2, &x\in\Omega, \quad t>0,\\
		c_t - d_c\Delta c = k_1a - k_2bc, &x\in\Omega,  \quad t>0,\\
		\nabla a \cdot \nu = \nabla b \cdot \nu = \nabla c\cdot \nu = 0, &x\in\partial\Omega, \quad t>0,\\
		a(x,0) = a_0(x), \quad b(x,0) = b_0(x), \quad c(x,0) = c_0(x),\quad &x\in\Omega.
	\end{cases}
\end{equation}
\medskip

For system \eqref{3x3_0}, we first address the existence of global classical solution. Due to the lack of comparison principle arguments (which held for the $2\times 2$ system), we are able to show the existence of global classical solutions in dimensions $N\leq 5$, or, when $N\geq 6$, when the diffusion coefficients are assumed to be sufficiently close to each other (see Lemma \ref{ClassicalSolution}). The $3\times 3$ system \eqref{3x3_0} also features a boundary equilibrium. 
Since no  uniform-in-time upper or lower positive {\it{a priori}} $L^{\infty}$-bounds can be obtained for system \eqref{3x3_0},  
the question of instability of the boundary equilibrium
turns out be significantly more tricky.
\medskip

We can nevertheless prove the
\medskip 

\begin{theorem}[Convergence to complex balance equilibrium for a $3\times 3$ system with boundary equilibrium]\label{ExpConverge}\hfill\\
Let $\Omega$ be a bounded smooth ($C^2$) domain of ${\mathbb{R}}^n$, and $d_a,d_b,d_c>0$, $k_1, k_2, k_3 >0$.
Assume that the initial data $(a_0, b_0, c_0)\in (L^{\infty}(\Omega))^3$ are such 
 that $b_0$ is a.e. bounded below by a strictly positive constant, i.e. $\|\frac{1}{b_0}\|_{L^{\infty}(\Omega)}<\infty$. 
		\par 
	Then, there exist {\normalfont{explicit constants}} $C>0$  
	 and $\lambda>0$ such that any classical solutions of \eqref{3x3_0} converge exponentially fast to the unique strictly positive complex balance equilibrium $(a_{\infty}, b_{\infty}, c_{\infty})$ with the rate $\lambda$, that is
	\begin{equation*}\label{e_convergence}
		\|a(t) - a_{\infty}\|_{L^1(\Omega)}^2 + \|b(t) - b_{\infty}\|_{L^1(\Omega)}^2 + \|c(t) - c_{\infty}\|_{L^1(\Omega)}^2 \leq
		C\, e^{-\lambda t},
		 \qquad \text{for all}\quad t>0,
	\end{equation*}
where the constant $C$ depends only on the initial relative entropy and $C_{CKP}$, and $\lambda$ depends only on $\Omega$, $M$, $d_a$, $d_b$, $d_c$, $k_1$, $k_2$, $k_3$ and $\|\frac{1}{b_0}\|_{L^{\infty}(\Omega)}$.
	
\end{theorem}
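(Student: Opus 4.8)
The plan is to deploy the entropy method of the first part of the paper with the entropy $\mathcal{E}(\cc|\ww)$ and its dissipation $\mathcal{D}(\cc)$ of \eqref{FreeEnergy_PDE}--\eqref{ED_PDE} (here $\cc=(a,b,c)$ and $\ww=(\ai,\bi,\ci)$ is the positive equilibrium), while circumventing the obstruction created by the \emph{unique} boundary equilibrium $(0,0,M/|\Omega|)$ of \eqref{3x3_0} (one checks that every equilibrium with $b=0$ forces $a=0$, and then mass conservation fixes $\overline c=M/|\Omega|$). As already flagged in Section \ref{sec:boundary}, $\mathcal{D}(\cc)$ degenerates along constant states whose $b$-component tends to $0$, whereas $\mathcal{E}(\cc|\ww)$ stays bounded away from $0$ there, so no unconditional estimate $\mathcal{D}\ge\lambda\,\mathcal{E}$ can hold. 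My strategy is to show that the flow is quantitatively repelled from this boundary equilibrium, by establishing a \emph{uniform-in-time strictly positive lower bound} $\overline b(t):=\frac{1}{|\Omega|}\int_\Omega b(x,t)\,dx\ge\delta>0$, and then to run the entropy--entropy-dissipation estimate only on the set of states obeying such a bound.

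First I would collect the structural preliminaries. The reaction terms are quasi-positive (each right-hand side is nonnegative when the corresponding concentration vanishes), so the classical solution provided by Lemma \ref{ClassicalSolution} stays nonnegative; testing \eqref{3x3_0} against the vector $(2,1,1)$ spanning $\mathrm{ker}(W)$ yields the single conservation law $\int_\Omega(2a+b+c)\,dx=M$, whence $\overline a,\overline b,\overline c$ are all bounded by $M/|\Omega|$. Specialising Prop. \ref{cale} gives $\frac{d}{dt}\mathcal{E}(\cc|\ww)=-\mathcal{D}(\cc)\le0$, so that $t\mapsto\mathcal{E}(\cc(t)|\ww)$ is nonincreasing and bounded by $\mathcal{E}(\cc_0|\ww)<\infty$ (finite since $\cc_0\in(L^\infty(\Omega))^3$).

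The heart of the argument, and the step I expect to be the main obstacle, is the lower bound on $\overline b$, which encodes the instability of the boundary equilibrium driven by the autocatalytic reaction $\mathcal{B}+\mathcal{C}\to2\mathcal{B}$. The key computation is to differentiate $\int_\Omega\log b\,dx$ along the flow: integration by parts under the Neumann condition produces the nonnegative term $d_b\int_\Omega|\nabla b|^2/b^2$, and discarding the nonnegative reaction contribution $k_1\int_\Omega a/b$ leaves the differential inequality
\[ \frac{d}{dt}\,\frac{1}{|\Omega|}\int_\Omega\log b\,dx \;\ge\; k_2\,\overline c-2k_3\,\overline b \;=\; k_2\Big(\tfrac{M}{|\Omega|}-2\overline a-\overline b\Big)-2k_3\,\overline b, \]
the last equality using mass conservation. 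Near the boundary equilibrium (where $\overline a,\overline b$ are small and $\overline c\approx M/|\Omega|$) the right-hand side is strictly positive, so $\frac{1}{|\Omega|}\int_\Omega\log b$ increases; combined with Jensen's inequality $\frac{1}{|\Omega|}\int_\Omega\log b\le\log\overline b$ (which yields the useful direction $\overline b\ge\exp(\frac{1}{|\Omega|}\int_\Omega\log b)$), this produces a barrier preventing $\overline b$ from collapsing to $0$. Turning this heuristic into a genuine uniform bound is delicate: one must rule out the competing regime in which $\overline b$ is small but $\overline a$ is not, and here I would exploit the relaxation equation $\frac{d}{dt}\overline a=-k_1\overline a+k_3\,\overline{b^2}$ (so that sustained smallness of $b$ forces $\overline a$ small as well) together with the dissipation-controlled spatial homogenisation of $b$ to control its concentration. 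Tracking how the resulting $\delta>0$ depends on $\|1/b_0\|_{L^\infty(\Omega)}$, $M$, $|\Omega|$ and the $k_i$ is exactly what makes the final rate $\lambda$ depend on $\|1/b_0\|_{L^\infty}$.

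With $\overline b(t)\ge\delta>0$ secured, the final step is a \emph{conditional} entropy--entropy-dissipation estimate. Adapting the convexification and finite-dimensional reduction behind Thm. \ref{EEDE} (or the constructive scheme of Thm. \ref{c_lem:eede}) to the class of states satisfying both $\int_\Omega(2a+b+c)\,dx=M$ and $\overline b\ge\delta$, one obtains a constant $\lambda>0$ with $\mathcal{D}(\cc)\ge\lambda\,\mathcal{E}(\cc|\ww)$; the lower bound on $\overline b$ keeps this restricted state space away from the degeneracy, so that the minimisation defining $\lambda$ returns a strictly positive value. Inserting this into $\frac{d}{dt}\mathcal{E}(\cc|\ww)=-\mathcal{D}(\cc)\le-\lambda\,\mathcal{E}(\cc|\ww)$ and applying Gronwall's lemma gives $\mathcal{E}(\cc(t)|\ww)\le\mathcal{E}(\cc_0|\ww)\,e^{-\lambda t}$, and the Csisz\'ar--Kullback--Pinsker inequality of Lemma \ref{CKPinequality} converts this relative-entropy decay into the claimed bound $\|a(t)-\ai\|_{L^1(\Omega)}^2+\|b(t)-\bi\|_{L^1(\Omega)}^2+\|c(t)-\ci\|_{L^1(\Omega)}^2\le C\,e^{-\lambda t}$ with $C=C_{CKP}^{-1}\,\mathcal{E}(\cc_0|\ww)$.
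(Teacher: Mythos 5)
Your overall architecture (entropy method, quantitative repulsion from the boundary equilibrium, conditional entropy--entropy-dissipation estimate, Gronwall, Csisz\'ar--Kullback--Pinsker) matches the spirit of the paper, but the step that carries the whole weight of the theorem --- the claimed \emph{uniform-in-time} lower bound $\overline{b}(t)\ge\delta>0$ --- is not proven, and your sketch breaks down exactly where you admit it is ``delicate''. The differential inequality $\frac{d}{dt}\int_\Omega\log b\,dx\ge k_2\overline{c}-2k_3\overline{b}$ has no favourable sign in the regime where the mass sits in $a$ (both $\overline{b}$ and $\overline{c}$ small); the term $k_1\int_\Omega (a/b)\,dx$ that you discard cannot be recovered from $\overline{a}$ alone, since Cauchy--Schwarz only gives $\int_\Omega (a/b)\,dx\ge \bigl(\int_\Omega\sqrt{a}\,dx\bigr)^2/\overline{b}$ and $\bigl(\int_\Omega\sqrt{a}\,dx\bigr)^2$ can be arbitrarily small at fixed $\overline{a}$ when $a$ concentrates; and the proposed repair via $\frac{d}{dt}\overline{a}=-k_1\overline{a}+k_3\overline{b^2}$ plus ``dissipation-controlled homogenisation'' is never carried out. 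Such a uniform bound is essentially as strong as the instability statement the theorem encodes, so it cannot be left as a heuristic. The paper deliberately avoids it: it proves only the much weaker, \emph{time-decaying pointwise} bound \eqref{hh}, $b(x,t)\ge h(t)=\bigl(\|1/b_0\|_{L^{\infty}}+2k_3t\bigr)^{-1}$, by applying the maximum principle to $1/b$ (which satisfies $\partial_t(1/b)-d_b\Delta(1/b)\le 2k_3$), and then bootstraps in two stages: a time-dependent estimate $\mathcal{D}(\cc)\ge\gamma\,h(t)\,\mathcal{E}(\cc|\ww)$ as in \eqref{gammaExplicit} plus Gronwall give \emph{algebraic} decay of the relative entropy; this produces a time $T_{\varepsilon}$ after which all three components satisfy $L^1$ lower bounds away from the boundary equilibrium, and only then does the conditional estimate of Lemma \ref{3x3_eede} yield the exponential rate.

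Even granting your uniform bound, your final step is also gapped: ``adapting the convexification behind Thm.~\ref{EEDE}'' to the set $\{2\overline{a}+\overline{b}+\overline{c}=M,\ \overline{b}\ge\delta\}$ does not go through with the dissipation \eqref{newd} as it stands. The convexified reaction term $\widehat{G}$ vanishes on the convex hull of the zero set of $G$, and since the reaction dissipation vanishes at the boundary equilibrium $(0,0,M)$ (cf. Prop.~\ref{cale} and \eqref{compute}), the segment $(1-\theta)\ww+\theta\,(0,0,M)$ with $0\le\theta\le 1-\delta/b_{\infty}$ lies in your constrained set, satisfies the mass constraint, and carries $\widehat{G}=0$ while $\mathcal{E}>0$; hence the chain $\widehat{\Phi}-F\ge\widehat{G}$ gives nothing there, and the positivity of the infimum defining your $\lambda$ cannot be concluded as in Thm.~\ref{EEDE}. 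This is precisely why the paper first factors the pointwise bound $b\ge h(t)$ out of the $k_2$-reaction term (using the homogeneity $\Psi(\lambda u;\lambda v)=\lambda\Psi(u;v)$), so that the residual dissipation $Q$ no longer vanishes at boundary states, \emph{before} convexifying, and why its second-stage Lemma \ref{3x3_eede} requires $L^1$ lower bounds on \emph{all three} components (they control the remainders $R(A),R(B),R(C)\le 1/\varepsilon$), not just on $b$. In short, you identify the right obstruction, but you replace the paper's two provable mechanisms (the maximum-principle bound $b\ge h(t)$ and the algebraic-to-exponential bootstrap) by two unproven ones.
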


In order to prove this result, we first observe that solutions to system \eqref{3x3_0},
which  initially satisfy $\|\frac{1}{b_0}\|_{L^{\infty}(\Omega)}<\infty$,  
obey a specific lower bound  of the form $\|\frac{1}{b(t)}\|_{L^{\infty}(\Omega)} \le C\, (1+t)$.  This lower bound 
shows that the dissipation of the relative entropy (with respect to the
unique strictly  positive equilibrium $(a_{\infty}, b_{\infty}, c_{\infty})$) decays at most like $(1+t)^{-1}$ times the 
relative entropy. 

As a consequence, we can show algebraically fast convergence to  $(a_{\infty}, b_{\infty}, c_{\infty})$ via a Gronwall argument. The essence of this Gronwall argument is illustrated by the following contradiction argument: Suppose that the relative entropy to  $(a_{\infty}, b_{\infty}, c_{\infty})$ would remain bounded below by a strictly positive constant uniformly-in-time, then 
the entropy dissipation would also remain bounded below by a positive constant times 
$(1+t)^{-1}$. The function $(1+t)^{-1}$ is however non-integrable over the interval $[0,+\infty)$ and thus
yields a contradiction to the fact that the time-integral  of the entropy dissipation over $[0,+\infty)$ is bounded by the initial relative entropy.


Moreover in a second step, the obtained algebraic convergence allows to recover exponential convergence towards the strictly 
positive complex balance equilibrium $(a_{\infty}, b_{\infty}, c_{\infty})$. In fact, the rate of the convergence can be  estimated explicitly as in
Thm. \ref{c_lem:eede} (see Rmk. \ref{rmk:explicit}).

\begin{remark}[Convergence to boundary equilibria]
The condition on $b_0$ in Thm \ref{ExpConverge} is clearly stronger than the assumption that the nonnegative initial datum $(a_0, b_0, c_0)$ is different from the boundary equilibrium. In fact, there is a class of initial data (strictly larger than the boundary equilibrium) for which solutions converge to the boundary equilibrium, see 
Rmk. \ref{r44}. Such a class of initial data was also presented in Cor. \ref{2x2cor} for the $2\times 2$ system.
Moreover, Cor. \ref{2x2cor} allowed to precisely classify for continuous initial data, which solutions converge to the boundary equilibrium and which solutions converge to the positive complex balance equilibrium. 
\end{remark}

\begin{remark} 
	We would like to point out a significant difference between our  method here and the linearisation techniques: when using  linearisation, one needs to wait until the trajectory is trapped into a small enough neighbourhood of $\ww = (a_{\infty}, b_{\infty}, c_{\infty})$.
 Our method, however, 
already allows to use the standard entropy method (and obtain exponential convergence to $\ww$)
as soon as the solution is not close to the boundary equilibria.

Note also that the arguments used in the $3\times 3$ system are different from the ones used in the
so-called  ``slowly growing {\it{a priori}} bounds'' variant of the entropy method
 (cf. \cite{tovi}, \cite{DeFe08}).
 \end{remark}
 

\medskip

The main novelties of this work are the following:
\begin{itemize}
	\item[(i)] Up to our knowledge, this paper seems to be the first quantitative study of convergence to equilibrium for 
nonlinear complex balanced reaction-diffusion systems.
	
%
Previous related results on the large time behaviour of complex balanced systems in the ODE setting
stated mostly qualitative results, that is convergence to equilibrium, but without convergence rates, see e.g. \cite{And11, CNN13, GMS14, Pan12} and references therein.
The only quantitative results for ODE complex balanced systems known to us was proven by \cite{SJ08}, in which the authors obtained convergence rates close to equilibrium via a linearisation technique.

The PDE setting \eqref{e0} for complex balanced systems is even less studied. Concerning the convergence to equilibrium for detailed balanced systems in the context of semi-conductor models, we refer to \cite{Gro92, GGH96, GH97},  where convergence rates were proven, 
but with non-constructive constants coming out of an abstract compactness argument. In \cite{DeFe06}, the authors obtained the first results of convergence to equilibrium for nonlinear detailed balance reaction-diffusion systems with explicit rates and constants.
 Related results were later derived in e.g. \cite{DeFe08, GZ10, DFIFIP, MiHaMa14,FT15}. We point out that all these references consider reaction-diffusion systems which satisfy a detailed balance condition. 
  Finally, a recent result on 
the quantitative convergence to equilibrium was proven in \cite{FPT15} 
in the special case of linear complex balanced systems for PDEs (coming out of  first order chemical reaction networks).

	\medskip
	\item[(ii)] Secondly, we study two specific complex balanced systems featuring boundary equilibria.
	
	One of the main difficulties in proving the Global Attractor Conjecture for ODEs is the appearance of boundary equilibria,  leading to the possibility of $\omega$-limit sets having nonempty intersections with the boundary $\partial\mathbb{R}^N_{\geq 0}$. Resolving this problem in the PDE setting \eqref{e0} is, up to the best of our knowledge, completely open. Here, we investigate two particular systems featuring boundary equilibria and study in which modified sense the GAC could be investigated: First, we note that in the PDE setting, there exist nontrivial classes of solutions which converge to the boundary equilibrium. Secondly, we point out the observation that for some specific examples of complex balanced systems, the nonlinear reaction terms which gives rise to boundary equilibriua, 
also lead to specifically decaying {\it{a priori}} lower bounds which (in combination with an entropy dissipation argument) are sufficient to prove instability of those boundary equilibria.
\end{itemize}

\medskip
\noindent{\bf Outline:}
The rest of the paper is organised as follows: 
In Section \ref{sec:noboundary} contains the results concerning complex balanced systems without boundary equilibria. 
The two specific systems featuring boundary equilibria are then analysed in Section \ref{sec:boundary}. 

\medskip
\noindent{\bf Notations:}
With a slight abuse of notation, we recall the following convention whenever a single letter $y$ is used for a complex, then $y_i$ (for $i=1,\ldots, N$) denotes the stoichiometric coefficient of the $i$-th specie $S_i$ in the complex $y$. If the complex is written as $y_r$ (or $y'_r$), then it denotes the source (or target) complex of the $r$-th reaction, and the $i$-th stoichiometric coefficient is denoted by $y_{r,i}$ (or $y'_{r,i}$).
 
\begin{itemize}[topsep=5pt, leftmargin=8mm]
	\item For $\cc, y \in \mathbb R^N$, we denote 
	\begin{equation*}
		\cc^y = \prod_{i=1}^{N}c_i^{y_i}\quad \text{ and } \quad \frac{\cc}{y} = \left(\frac{c_1}{y_1}, \frac{c_2}{y_2}, \ldots, \frac{c_N}{y_N}\right) \in \mathbb{R}^N.
	\end{equation*}
	The scalar product is denoted by
	\begin{equation*}
		\cc \cdot y = \sum_{i=1}^{N}c_iy_i.
	\end{equation*}
	\item For a function $f: \mathbb R \rightarrow \mathbb R$ and $\cc \in \mathbb R^N$, we denote 
	\begin{equation*}
		f(\cc) = \left(f(c_1), f(c_2), \ldots, f(c_N)\right) \in \mathbb R^N.
	\end{equation*}
	\item We denote by $\|\cdot\|$ the usual norm in $L^2(\Omega)$.
	\item  We can assume (w.l.o.g. by rescaling the space variable $x$) that $\Omega$ has normalised volume
$$
|\Omega| = 1.
$$
Note that we shall systematically make this assumption in all the following proofs of the theorems concerning PDEs.
\item In Section \ref{sec:noboundary}, we shall use the notations $\cc$, $\overline{\cc}$, $\ww$, etc. whenever a result is proven for general complex 
balanced systems without boundary equilibrium. On the other hand, for results specific to a cycle of reactions and systems 
\eqref{c_cycle_0}, we shall use the notations ${\mathbf{a}}$, ${\mathbf{\bar{a}}}$, ${\mathbf{a}}_{\infty}$, etc. instead.
In this case, it is also convenient to use the periodic notation $a_{N+1} := a_1$ or $a_{N+1,\infty} := a_{1,\infty}$ e.t.c.
Moreover, we shall introduce capital letters as 
a short hand notation for square roots of quantities, e.g. $C_i := \sqrt{c_{i}}$, $\overline{C_{i}} := \overline{\sqrt{c_{i}}}$, $C_{i,\infty} := \sqrt{c_{i,\infty}}$, $A_i := \sqrt{a_{i}}$, etc. 
\end{itemize}

\section{Complex balanced systems without boundary equilibria}\label{sec:noboundary}

 Before beginning the proof of Thm. \ref{EEDE}, we first state and prove a proposition devoted to the study 
 of ODEs coming out of a chemical reaction netwok, since the tools developed in this proof can then be used 
 in the study of the PDEs.

\subsection{Convergence to equilibrium for ODE systems}\label{convergenceODE}\hfill\\
Throughout this paper, we frequently use the two variables function $\Psi$ defined by \eqref{psi},
and the elementary estimate
\begin{equation} \label{elest}
	\Psi(x,y) \geq (\sqrt{x} - \sqrt{y})^2.
\end{equation}

It is well known 
for reaction terms ${\mathbf{R}}$ describing complex balanced chemical networks without boundary equilibrium that  solutions $\cc := \cc (t)$ of the ODE system \eqref{e00} (with
nonnegative initial data $\cc_0\ge0$ corresponding to a strictly positive mass vector $M\in \mathbb{R}^m_{>0}$) converge towards  
the unique strictly positive complex balance equilibrium $\ww$.
Moreover, the relative entropy functional
\begin{equation}\label{ODE_Entropy}
E(\cc(t)|\ww) := \sum_{i=1}^{N}\Psi(c_i(t); c_{i,\infty}) = \sum_{i=1}^{N}\left(c_i(t)\log{\frac{c_i(t)}{c_{i,\infty}}} - c_i + c_{i,\infty}\right)
\end{equation}
is decreasing along solution trajectories (see e.g. \cite{HoJa72, Fe79, And14, Gop13}). 
The corresponding entropy dissipation, however, seems not to be written down. We provide the following proposition to 
establish the explicit expression of the entropy dissipation:
\medskip

\begin{proposition}\label{cale}
We consider a reaction rate defined by \eqref{Reaction} and $\ww$ a strictly positive complex balance  equilibrium of a chemical reaction network (cf. Defs. \ref{def:crn} and \ref{Equilibria}).
Then,
\begin{equation}\label{33bis}
D(\cc) := -\RR(\cc)\cdot \log{\frac{\cc}{\ww}} = \sum_{r=1}^{|\mathcal{R}|}k_r\ww^{y_r}\,\Psi\left(\frac{\cc^{y_r}}{\ww^{y_r}};\frac{\cc^{y_r'}}{\ww^{y_r'}}\right) \ge 0.
\end{equation}
Moreover $D(\cc_*) =0$ for some $\cc_*\in \mathbb{R}^N_{\geq 0}$ if and only if $\cc_*$ is a complex balanced equilibrium (that is, \eqref{ComplexBalance} holds). Finally, if the chemical reaction network has no 
boundary equilibria, then $\cc_* = \ww$.
\end{proposition}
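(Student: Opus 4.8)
The plan is to prove the three assertions of Proposition~\ref{cale} in turn, the common engine being the complex balance identity \eqref{ComplexBalance} for the reference equilibrium $\ww$. For each complex $y\in\C$ write $\theta_y := \cc^y/\ww^y$, and set $u_r := \theta_{y_r}=\cc^{y_r}/\ww^{y_r}$ and $v_r := \theta_{y_r'}=\cc^{y_r'}/\ww^{y_r'}$. To obtain \eqref{33bis} I would use the elementary relation $y\cdot\log\frac{\cc}{\ww}=\log(\cc^y/\ww^y)$ to rewrite $(y_r'-y_r)\cdot\log\frac{\cc}{\ww}=\log v_r-\log u_r$; inserting $\RR(\cc)=\sum_r k_r\cc^{y_r}(y_r'-y_r)$ from \eqref{Reaction} into $D(\cc)=-\RR(\cc)\cdot\log\frac{\cc}{\ww}$ then gives $D(\cc)=\sum_r k_r\ww^{y_r}u_r\log\frac{u_r}{v_r}$. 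With $\Psi(x;y)=x\log(x/y)-x+y$ from \eqref{psi} I substitute $u_r\log\frac{u_r}{v_r}=\Psi(u_r;v_r)+u_r-v_r$, so that \eqref{33bis} follows once the linear remainder $\sum_r k_r\ww^{y_r}(u_r-v_r)$ is shown to vanish. This is exactly where \eqref{ComplexBalance} enters: grouping $\sum_r k_r\ww^{y_r}u_r=\sum_r k_r\cc^{y_r}$ by source complex and $\sum_r k_r\ww^{y_r}v_r$ by target complex, both equal $\sum_{y\in\C}\frac{\cc^y}{\ww^y}$ times, respectively, the $\ww$-outflow $\sum_{\{r:y_r=y\}}k_r\ww^{y_r}$ and the $\ww$-inflow $\sum_{\{s:y_s'=y\}}k_s\ww^{y_s}$ at $y$, which coincide by \eqref{ComplexBalance}. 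Nonnegativity is then immediate since $\Psi\ge0$ and $k_r\ww^{y_r}>0$.

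For the zeros, nonnegativity of the summands in \eqref{33bis} shows that $D(\cc_*)=0$ forces $\cc_*^{y_r}/\ww^{y_r}=\cc_*^{y_r'}/\ww^{y_r'}$ for every reaction $r$; thus $\theta_y:=\cc_*^y/\ww^y$ is constant along every reaction, i.e. on every linkage class of the reaction graph. (At boundary points $\cc_*\in\partial\mathbb{R}^N_{\geq0}$ one reads $D$ through \eqref{33bis} with the conventions $\Psi(0;y)=y$ and $\Psi(x;0)=+\infty$ for $x>0$, which still force equality of the two ratios.) To tie this to complex balance of $\cc_*$, I re-expand $D(\cc_*)$ directly as $\sum_r k_r\cc_*^{y_r}(\log\theta_{y_r}-\log\theta_{y_r'})$ and group by complex, producing the single identity $D(\cc_*)=\sum_{y\in\C}(\log\theta_y)(O_y-I_y)$, where $O_y=\sum_{\{r:y_r=y\}}k_r\cc_*^{y_r}$ and $I_y=\sum_{\{s:y_s'=y\}}k_s\cc_*^{y_s}$ are the out- and inflow of $\cc_*$ at $y$. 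Both implications then follow from this identity: if $\cc_*$ satisfies \eqref{ComplexBalance} then $O_y=I_y$ and $D(\cc_*)=0$; conversely $D(\cc_*)=0$ makes $\theta$ constant on linkage classes, so that $O_y=\theta_y\sum_{\{r:y_r=y\}}k_r\ww^{y_r}$ and $I_y=\theta_y\sum_{\{s:y_s'=y\}}k_s\ww^{y_s}$, whence the complex balance of $\ww$ transfers to $\cc_*$.

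Finally, under the assumption of no boundary equilibria, the previous step shows $\cc_*$ is a complex balance equilibrium, and if any component vanished it would be a boundary equilibrium, contradicting the hypothesis; hence $\cc_*\in\mathbb{R}^N_{>0}$. As $\cc_*$ lies in the stoichiometric compatibility class of $\ww$ (the relevant situation, since mass is conserved along the ODE flow), the uniqueness of the strictly positive complex balance equilibrium for a given mass vector recalled above (cf.~\cite{HoJa72}) gives $\cc_*=\ww$. I expect the real content to sit in the first two steps: the cancellation of the linear remainder through \eqref{ComplexBalance} is what renders the compact form \eqref{33bis} available, and the reorganisation $D(\cc_*)=\sum_{y}(\log\theta_y)(O_y-I_y)$ is the single identity from which the equivalence with complex balance is read off, the boundary conventions demanding a little additional care; the positivity and uniqueness inputs needed for $\cc_*=\ww$ are then standard.
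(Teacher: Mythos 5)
Your main computation and your treatment of the zero set follow essentially the same route as the paper: you derive \eqref{33bis} by pairing $\RR(\cc)$ with $\log(\cc/\ww)$, splitting off $\Psi$, and killing the linear remainder $\sum_r k_r\ww^{y_r}(u_r-v_r)$ by regrouping over complexes and invoking \eqref{ComplexBalance} for $\ww$; and your forward implication ($D(\cc_*)=0$ forces $u_r=v_r$ for every reaction, whence complex balance of $\ww$ transfers to $\cc_*$) is exactly the paper's argument. Your final step is also what the paper asserts (the paper simply says ``obviously''), with the correct extra remark that one must use uniqueness of the positive complex balance equilibrium within the stoichiometric compatibility class, which is built into the paper's Definition of equilibria via $\mathbb{Q}\,\cc_*=M$.

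Where you go beyond the paper is the converse (``if'') half of the equivalence, which the paper's proof in fact never addresses. Two remarks there. First, for strictly positive $\cc_*$ your identity $D(\cc_*)=\sum_{y\in\C}(\log\theta_y)(O_y-I_y)$ is legitimate and does the job, but there is an even shorter route: a complex balance equilibrium is in particular an equilibrium, so $\RR(\cc_*)=0$ and $D(\cc_*)=-\RR(\cc_*)\cdot\log(\cc_*/\ww)=0$ directly. Second, and this is the genuine soft spot, for boundary $\cc_*$ the identity cannot be used as written: when $\cc_*^y=0$ one has $\log\theta_y=-\infty$ while $O_y-I_y=0$, so the grouped sum contains $(-\infty)\cdot 0$ terms and the rearrangement of the (possibly infinite) $\Psi$-sum into that form is not justified. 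Recall that with your conventions $\Psi(0;v)=v$, so $D(\cc_*)=0$ in the sense of \eqref{33bis} requires $\cc_*^{y_r}=0\Rightarrow\cc_*^{y_r'}=0$ for every reaction, and this does not follow from \eqref{ComplexBalance} by pure algebra; one needs a structural input, e.g.\ that complex balanced networks are weakly reversible, so that positivity of complex values propagates along reactions and around directed cycles, splitting the network into a strictly positive part (where the interior argument applies) and a part where all complex values vanish. You flag that the boundary case ``demands a little additional care'', but as written that care is not supplied; since the paper itself proves only the ``only if'' direction, this does not put you behind the paper, but the converse at boundary points should either be completed along the lines above or restricted to interior states.
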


\begin{proof}
We compute
\begin{equation}\label{e2}
\begin{aligned}
-\RR(\cc)\cdot \log{\frac{\cc}{\ww}} &= \Biggl(\sum_{r=1}^{|\R|}k_r\cc^{y_r}(y_r - y_r')\Biggr)\cdot \log{\frac{\cc}{\ww}}
= \sum_{r=1}^{|\R|}k_r\cc^{y_r}\log{\frac{\cc^{y_r - y_r'}}{\ww^{y_r - y_r'}}}\\
				&= \sum_{r=1}^{|\R|}\left[k_r\ww^{y_r}\left(\frac{\cc^{y_r}}{\ww^{y_r}}\log\left(\frac{\cc^{y_r}}{\ww^{y_r}}\bigg/\frac{\cc^{y_r'}}{\ww^{y_r'}}\right) - \frac{\cc^{y_r}}{\ww^{y_r}} + \frac{\cc^{y_r'}}{\ww^{y_r'}}\right) + k_r\cc^{y_r} - k_r\cc^{y_r'}\frac{\ww^{y_r}}{\ww^{y_r'}}\right]\\
				&= \sum_{r=1}^{|\R|}k_r\ww^{y_r}\Psi\left(\frac{\cc^{y_r}}{\ww^{y_r}}; \frac{\cc^{y_r'}}{\ww^{y_r'}}\right) + \sum_{r=1}^{|\R|}\left[k_r\cc^{y_r} - k_r\cc^{y_r'}\frac{\ww^{y_r}}{\ww^{y_r'}}\right].
	\end{aligned}
\end{equation}
It remains to prove that
\begin{equation}\label{e3}
	\sum_{r=1}^{|\R|}\left[k_r\cc^{y_r} - k_r\cc^{y_r'}\frac{\ww^{y_r}}{\ww^{y_r'}}\right] = 0.
\end{equation}
Indeed, by using the properties of Def. \ref{def:crn},
\begin{equation}\label{e4}
\begin{aligned}
	\sum_{r=1}^{|\R|}\left[k_r\cc^{y_r} - k_r\cc^{y_r'}\frac{\ww^{y_r}}{\ww^{y_r'}}\right] 
	&= \sum_{y\in \C}\Biggl[\,\sum_{\{r:\, y_r = y\}}k_r\cc^{y_r} - \sum_{\{s:\,y_s'=y\}}k_s\cc^{y_s'}\frac{\ww^{y_s}}{\ww^{y_s'}}\Biggr]\\
	&=  \sum_{y\in \C}\Biggl[\,\cc^{y}\sum_{\{r:\, y_r = y\}}k_r - \frac{\cc^{y}}{\ww^{y}}\sum_{\{s:\,y_s'=y\}}k_s\ww^{y_s}\Biggr]\\
	&= \sum_{y\in \C}\frac{\cc^{y}}{\ww^{y}}\Biggl[\,\sum_{\{r:\, y_r = y\}}k_r\ww^{y_r} - \sum_{\{s:\,y_s'=y\}}k_s\ww^{y_s}\Biggr]= 0,
\end{aligned} 
\end{equation}
where we have used the complex balance condition \eqref{ComplexBalance} in the last step.
		
Now assume that $D(\cc_*) = 0$ for some $\cc_*\in \mathbb{R}^N_{\geq 0}$. Since $\Psi(x,y) \geq 0$ for all $x\ge 0$, $y >0$ and $\Psi(x,y) = 0$ if and only if $x = y$, it follows from $D(\cc_*) = 0$ that
		\begin{equation}\label{e8}
			\frac{\cc_*^{y_r}}{\ww^{y_r}} = \frac{\cc_*^{y_r'}}{\ww^{y_r'}} \quad \text{ or equivalently } \quad \frac{\cc_*^{y_r'}}{\ww^{y_r'}}\ww^{y_r} = \cc_*^{y_r}
		\end{equation}
		for all $r = 1,\ldots, |\mathcal R|$. Thus for any $y\in \mathcal{C}$, we have
		\begin{equation*}
									\sum_{\{r:\,y_r=y\}}k_r\cc_*^{y_r}  = \frac{\cc_*^{y}}{\ww^y}\sum_{\{r:\,y_r=y\}}k_r\ww^{y_r}
									=\frac{\cc_*^{y}}{\ww^y}\sum_{\{s:\, y_s' = y\}}k_s\ww^{y_s}
									= \sum_{\{s:\,y_s' = y\}}k_s\frac{\cc_*^{y_s'}}{\ww^{y_s'}}\ww^{y_s}
								    = \sum_{\{s:\,y_s' = y\}}k_s\cc_*^{y_s}.
		\end{equation*}
Therefore, $\cc_*$ is a complex balance equilibrium due to \eqref{ComplexBalance}. Furthermore, if the considered
chemical reactions network 
 has no boundary equilibria, then obviously $\cc_* \equiv \ww$.
\end{proof}

\begin{remark}[Explicit Entropy-Dissipation]\label{Rem:Explicit}\hfill\\
	In the case of linear reaction networks, a recent result \cite{FPT15} shows explicitly that the relative entropy between any two solutions decays. Such a strong results does not seem true for nonlinear reaction networks, for which we suspect that only the relative entropy \eqref{ODE_Entropy} is a Lyapunov functional in general. For linear systems, the existence of a dissipative relative entropy between any two solutions is due to a fundamental property of linear systems, see e.g. \cite{FJ16}. It can also be derived from the General Entropy Principle for finite Markov chains, see \cite{PMP06}.
\end{remark}

 Thanks to the explicit expression of the entropy dissipation in Prop. \ref{cale}, we 
 are now in  position to prove the following quantitative result about the decay towards equilibrium of solutions of the ODE systems \eqref{e00} (for complex balanced chemical reactions networks without boundary equilibria).
 \medskip

\begin{proposition}[Exponential convergence to equilibrium for ODE systems without boundary equilibria]\label{ODE_Theorem}\hfill\\
We consider a complex balanced chemical reaction network without boundary equilibria, the corresponding reaction term 
\eqref{Reaction} and, for a given mass vector $M\in\mathbb{R}^m_{>0}$, the unique strictly positive complex balanced equilibrium $\ww$
together with the ODE system \eqref{e00}. 
	
Then, there exists a strictly positive constant $\lambda>0$ depending only on the coefficients appearing in ${\mathbf{R}}$, such that 
 any solution $\cc := \cc(t)$ to the ODE system \eqref{e00} (with nonzero nonnegative initial data)
  satisfies the following exponentially fast decay of the relative entropy:
	\begin{equation}\label{ODE_decay}
		E(\cc(t)|\ww) \leq E(\cc_0|\ww)\, e^{-\lambda t} \quad \text{ for all } \quad t>0.
	\end{equation}
As a consequence, these solutions $\cc$ converge exponentially fast to the unique strictly positive complex balance equilibrium $\ww$:
	\begin{equation}\label{Converge_ODE}
		\sum_{i=1}^{N}|c_i(t) - c_{i,\infty}|^2 \leq C E(\cc_0|\ww)e^{-\lambda t} \quad \text{ for all } \quad t>0 ,
	\end{equation}
	where $C = (\sqrt{\tilde{K}} + \max_{i}\{\sqrt{c_{i,\infty}}\})^2$ and $\tilde{K}$ is defined as in \eqref{KK}.
\end{proposition}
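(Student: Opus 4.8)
The plan is to reduce the exponential decay \eqref{ODE_decay} to a finite-dimensional entropy--entropy dissipation (EED) inequality and then to invoke Gronwall's lemma. First I would differentiate the relative entropy along a solution $\cc := \cc(t)$ of \eqref{e00}: using $\partial_{c_i}\Psi(c_i;c_{i,\infty}) = \log(c_i/c_{i,\infty})$ together with $\dot\cc = \RR(\cc)$, the chain rule gives $\frac{d}{dt}E(\cc(t)|\ww) = \RR(\cc)\cdot\log\frac{\cc}{\ww} = -D(\cc) \le 0$, where the last identity is exactly Prop.~\ref{cale}. Since $\mathbb{Q}\,\RR \equiv 0$, the trajectory conserves mass, $\mathbb{Q}\,\cc(t) = \mathbb{Q}\,\cc_0 = M$, and hence stays in the affine compatibility class $\mathcal{K} := \{\cc\in\mathbb{R}^N_{\ge0} : \mathbb{Q}\,\cc = M\}$, which contains $\ww$. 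Thus it suffices to establish the EED inequality $D(\cc) \ge \lambda\,E(\cc|\ww)$ for all $\cc\in\mathcal{K}$ with a constant $\lambda>0$; Gronwall's lemma then yields \eqref{ODE_decay}.

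To prove the EED inequality I would show that $\lambda := \inf\{D(\cc)/E(\cc|\ww) : \cc\in\mathcal{K},\ \cc\ne\ww\}$ is strictly positive, analysing three regimes. Near $\ww$, I would Taylor-expand both functionals: both $D$ and $E$ vanish together with their gradients at $\ww$, the Hessian of $E$ is the positive definite $\mathrm{diag}(1/c_{i,\infty})$, and the Hessian of $D$ restricted to the stoichiometric subspace $S = \ker\mathbb{Q}$ (the tangent space of $\mathcal{K}$) is positive definite by the Horn--Jackson linearised stability of complex balance equilibria \cite{HoJa72}, so the ratio of these two quadratic forms on $S$ is bounded below by a positive constant. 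On any bounded region of $\mathcal{K}$ staying away from $\ww$ and from $\partial\mathbb{R}^N_{\ge0}$, the ratio is continuous and strictly positive, since by Prop.~\ref{cale} and the no-boundary-equilibria hypothesis one has $D(\cc) = 0 \iff \cc = \ww$, while $E(\cc|\ww) > 0$ for $\cc\ne\ww$. Finally, approach to $\partial\mathbb{R}^N_{\ge0}$ is precisely excluded by the absence of boundary equilibria (so $D$ cannot degenerate relative to $E$ there), and any escape to infinity is controlled by the fact that the reaction monomials $\cc^{y_r}$ entering $D$ dominate the $c_i\log c_i$ growth of $E$; for the conservative networks of interest $\mathcal{K}$ is in fact bounded and this last regime does not arise.

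With the EED inequality at hand, Gronwall gives \eqref{ODE_decay}. To obtain the $\ell^2$ convergence \eqref{Converge_ODE}, I would combine the a priori bound $c_i(t)\le\tilde K$ (from \eqref{KK}, a consequence of the boundedness of $\mathcal{K}$) with the factorisation $|c_i-c_{i,\infty}| = (\sqrt{c_i}+\sqrt{c_{i,\infty}})\,|\sqrt{c_i}-\sqrt{c_{i,\infty}}|$ and the elementary inequality \eqref{elest} in the form $(\sqrt{c_i}-\sqrt{c_{i,\infty}})^2\le\Psi(c_i;c_{i,\infty})$, giving $|c_i-c_{i,\infty}|^2\le(\sqrt{\tilde K}+\max_j\sqrt{c_{j,\infty}})^2\,\Psi(c_i;c_{i,\infty})$; summing over $i$ and inserting \eqref{ODE_decay} yields \eqref{Converge_ODE} with $C = (\sqrt{\tilde K}+\max_i\sqrt{c_{i,\infty}})^2$. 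I expect the main obstacle to be the EED inequality itself, and within it the uniform positivity of $D/E$ near the boundary of the positive orthant: this is exactly the regime where complex balanced systems may fail to equilibrate in the presence of boundary equilibria, and here it is controlled only through the standing no-boundary-equilibria hypothesis. Note that the resulting $\lambda$ comes out of a minimisation/compactness argument and is therefore non-constructive.
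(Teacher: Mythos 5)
Your overall strategy coincides with the paper's: the entropy--dissipation identity of Prop.~\ref{cale}, a finite-dimensional entropy entropy-dissipation inequality obtained from a Taylor expansion near $\ww$ plus a compactness argument away from $\ww$, then Gronwall, and finally the elementary bound $\Psi(x;y)\ge(\sqrt{x}-\sqrt{y})^2$ to convert entropy decay into \eqref{Converge_ODE}. There is, however, a genuine gap in your treatment of the ``escape to infinity'' regime. You claim the inequality $D(\cc)\ge\lambda E(\cc|\ww)$ on the \emph{full} compatibility class $\mathcal{K}=\{\cc\in\mathbb{R}^N_{\ge0}:\mathbb{Q}\,\cc=M\}$ and dismiss the unbounded regime by asserting that ``for the conservative networks of interest $\mathcal{K}$ is in fact bounded.'' This is false in the generality of the proposition: complex balanced networks without boundary equilibria need not be conservative. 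The simplest example is $\emptyset \rightleftharpoons \mathcal{A}$ (the zero complex is admissible under Def.~\ref{def:crn}), for which $\mathbf{R}(c)=k_1-k_2c$, the unique equilibrium $c_\infty=k_1/k_2$ is strictly positive, there is no boundary equilibrium since $\mathbf{R}(0)=k_1>0$, and $m=0$, so the compatibility class is the whole half-line $\mathbb{R}_{\ge0}$ and is unbounded. Your fallback assertion --- that the monomials $\cc^{y_r}$ in $D$ dominate the $c_i\log c_i$ growth of $E$ at infinity --- is precisely the nontrivial statement that would then have to be proven, and you give no argument for it.

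The repair is exactly the device the paper uses, and it is already available to you since you established the monotonicity of $E$ along trajectories in your first step: replace $\mathcal{K}$ by the set $\mathfrak{C}_M$ of \eqref{CM}, i.e.\ intersect the compatibility class with the entropy sublevel set $\{E(\boldsymbol{\xi}|\ww)\le E(\cc_0|\ww)\}$. This set is forward-invariant (by entropy decay) and \emph{compact}, because the superlinear growth of $x\mapsto x\log x$ makes entropy sublevel sets bounded; your near-equilibrium and intermediate regimes then go through unchanged and reproduce the paper's Lemma~\ref{intermediate}, at the price that $\lambda$ also depends on $M$ and on the initial entropy. Two minor further points. First, you invoke ``Horn--Jackson linearised stability'' as a black box for the positive definiteness of the Hessian of $D$ on $S=\ker\mathbb{Q}$; the paper proves this directly by writing the Hessian as $\sum_r k_r\bigl((y_r-y_r')\cdot\boldsymbol{\eta}\bigr)^2$ with $\eta_i=\mu_i/c_{i,\infty}$, noting its kernel is $\ker(W)$, and showing $\ker(W)\cap\ker\mathbb{Q}\cap\{\mathbb{Q}\boldsymbol{\mu}=0\}$ forces $\boldsymbol{\mu}=0$ via the positive definiteness of $\mathbb{Q}\,\mathrm{diag}(\ww)\,\mathbb{Q}^{\top}$; you should supply this (short) argument rather than a loose citation. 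Second, the uniform bound $c_i(t)\le\tilde{K}$ of \eqref{KK}, which you use for \eqref{Converge_ODE}, follows from the entropy bound $E(\cc(t)|\ww)\le E(\cc_0|\ww)$ together with $\Psi(x;y)\ge(\sqrt{x}-\sqrt{y})^2$, not from boundedness of $\mathcal{K}$ --- again it is the entropy sublevel set, not the compatibility class, that is the correct object throughout.
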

\begin{proof}

We first observe that
\begin{equation}\label{e1}
-\frac{d}{dt}E(\cc|\ww) = -\sum_{i=1}^{N}\left(\frac{d}{d t}c_i\right)\log{\frac{c_i}{c_{i,\infty}}}= - \RR(\cc)\cdot \log{\frac{\cc}{\ww}},
\end{equation}
so that thanks to \eqref{33bis},  
\begin{equation}\label{edla}
-\frac{d}{dt} E(\cc|\ww) = D(\cc)
= \sum_{r=1}^{|\mathcal{R}|}k_r\ww^{y_r}\,\Psi\left(\frac{\cc^{y_r}}{\ww^{y_r}};\frac{\cc^{y_r'}}{\ww^{y_r'}}\right). 
\end{equation}  
Thus, if we are able to show the following {\it entropy entropy-dissipation estimate}
	\begin{equation}\label{inequal}
		D(\cc(t)) \geq \lambda\, E(\cc(t)|\ww),
	\end{equation}
for some $\lambda >0$ and for all $t>0$, we 
see that \eqref{ODE_decay} will follow from 
 a Gronwall argument.
	We consider
		\begin{equation}\label{CM}
			\mathfrak{C}_M = \left\{\xi\in \mathbb R_+^N: \mathbb Q\, \xi = M \quad \text{ and } \quad E(\xi|\ww) \leq E(\cc_0|\ww) \right\}.
		\end{equation}
		Thanks to \eqref{edla}, we see that $\cc(t) \in \mathfrak{C}_M$ for all $t>0$. Therefore, we will prove the estimate (\ref{inequal}) thanks to the
		
		\begin{lemma}\label{intermediate}
			Let $E(\cdot|\ww)$, $D(\cdot)$ and $\mathfrak{C}_M$ be defined in \eqref{ODE_Entropy}, \eqref{33bis} and \eqref{CM} respectively. Then
			\begin{equation}\label{lambda}	
					\lambda := \inf_{\xi\in \mathfrak{C}_M}\frac{D(\xi)}{E(\xi|\ww)} >0.
				\end{equation}
		\end{lemma}
		\begin{proof}
		Firstly, since $D(\xi) \geq 0$ and $E(\xi|\ww) \geq 0$, we see that $\lambda  \geq 0$. Secondly, it follows from  definition \eqref{CM} of $\mathfrak{C}_M$ that
		 the denominator $E(\xi|\ww)$ is bounded above. On the other hand, thanks to Prop. \ref{cale}, $D(\xi) = 0$ if and only if $\xi = \ww$. Thus, $\lambda $ can approach zero only in cases when $\xi \rightarrow \ww$ on $\mathfrak{C}_M$ (where we recall that $\ww$ is strictly positive). 
		Therefore, in order to prove \eqref{lambda}, it is sufficient to show that
		\begin{equation}\label{e9}
			\liminf_{\mathfrak{C}_M \ni \xi \rightarrow \ww}\frac{D(\xi)}{E(\xi|\ww)} > 0.
		\end{equation}
	Thanks to $D(\ww) = 0$, $\nabla D(\ww) = 0$, and
	\begin{equation*}
		(\xi - \ww)^T\nabla^2D(\ww)(\xi - \ww) = \sum_{r=1}^{|\mathcal R|}k_r\left(\frac{y_{r,i} - y_{r,i}'}{c_{i,\infty}}(\xi - c_{i,\infty})\right)^2,
	\end{equation*}
which follows from direct computations, 
	we can Taylor expand  around $\ww$ to obtain
		\begin{equation}\label{e10}
			\Lambda:= \liminf_{\mathfrak{C}_M \ni \xi \rightarrow \ww}\frac{{D}(\xi)}{{E}(\xi|\ww)} = \inf_{\xi \in \mathfrak{C}_M}\left\{\frac{\sum\limits_{r=1}^{|\R|}k_r\left(\sum\limits_{i=1}^{N}\frac{y_{r,i} - y_{r,i}'}{c_{i,\infty}}(\xi_i - c_{i,\infty})\right)^2}{\sum_{i=1}^{N}\frac{(\xi_i - c_{i,\infty})^2}{c_{i,\infty}}}\right\}.
		\end{equation}
		Denoting $\boldsymbol{\mu} = \xi - \ww$, we see that \eqref{e10} becomes
					\begin{equation}\label{e10+1}
						\Lambda  = \inf_{\boldsymbol{\mu} + \ww \in \mathfrak{C}_M}\left\{\frac{\sum\limits_{r=1}^{|\R|}k_r\left(\sum\limits_{i=1}^{N}\frac{y_{r,i} - y_{r,i}'}{c_{i,\infty}}\mu_i\right)^2}{\sum_{i=1}^{N}\frac{\mu_i^2}{c_{i,\infty}}}\right\}.
					\end{equation}
	Since both numerator and denominator of the above fraction are of homogeneity two, 
	 it is sufficent to estimate \eqref{e10+1} for $\boldsymbol{\mu}$ in the unit ball, i.e. $|\boldsymbol{\mu}|=1$. By putting $\eta_i = \frac{\mu_i}{c_{i,\infty}}$, we have $\boldsymbol{\mu} = \mathrm{diag}(\ww)\,\boldsymbol{\eta}$. Hence, the numerator of \eqref{e10+1} becomes
					\begin{equation}\label{e11}
						\sum_{r=1}^{|R|}k_r\left((y_r - y_r')\cdot \boldsymbol{\eta}\right)^2.
					\end{equation}
					This term can only be zero when $\boldsymbol{\eta}\in \mathrm{ker}(W)$, where we recall that $W$ is the Wegscheider matrix
	\begin{equation}\label{WegscheiderMatrix}	
	{W} = [(y_r' - y_r)_{r=1,\ldots,|\R|}]^{\top} \in \mathbb{R}^{|\R|\times N}.
	\end{equation}
		We now consider two cases, related to the number of conservation laws $m =  \mathrm{dim}\,\mathrm{ker}(W)$:
		\begin{description}
			\item[Case 1. $m>0$] Since the rows of $\mathbb{Q}$ form a basis of $\mathrm{ker}(W)$, there exists $\mathbf a\in \mathbb{R}^m$ such that
				\begin{equation}\label{e12}
				\boldsymbol{\eta} = \mathbb{Q}^{\top}\mathbf{a}.
				\end{equation}
				From $\mathbb{Q}\, \boldsymbol{\mu} = \mathbb{Q}\, \xi - \mathbb{Q}\, \ww= M - M = 0$, we get
	\begin{equation}\label{e13}
		\mathbb{Q}\,\mathrm{diag}(\ww)\,\mathbb{Q}^{\top} \mathbf{a} = \mathbb{Q}\,\mathrm{diag}(\ww)\boldsymbol{\eta} = \mathbb{Q}\boldsymbol{\mu}= 0.
		\end{equation}
				It follows from $\mathrm{rank}(\mathbb Q) = m$ and $c_{i,\infty}>0$ (for all $i=1,2,\ldots, N$) that $\mathbb{Q}\,\mathrm{diag}(\ww)\,\mathbb{Q}^{\top}$ is a positive definite $m \times m$ matrix. Hence, \eqref{e13} implies $\mathbf{a} = 0$,
				 which shows $\boldsymbol{\eta} = 0$, and consequently $\boldsymbol{\mu} = 0$,
				  which contradicts the fact that $|\boldsymbol{\mu}| = 1$.
				\medskip 
				
				
			\item[Case 2. $m=0$] In this case it is obvious that $\Lambda = 0$ if and only if $\boldsymbol{\eta} = 0$,  since $\mathrm{ker}(W) = 0$.
		\end{description}
		 In conclusion, we have shown that $\Lambda >0$ and consequently \eqref{lambda}. 
		 \end{proof}
	 
	 A direct consequence of Lemma \ref{intermediate} is that \eqref{inequal} holds and thus \eqref{ODE_decay} holds thanks to a classic Gronwall lemma. 
	 From $E(\cc(t)|\ww) \leq E(\cc_0|\ww)$ and the inequality $\Psi(x;y) \geq (\sqrt{x} - \sqrt{y})^2$ we get
	 \begin{equation*}
	 	E(\cc_0|\ww) \geq \sum_{i=1}^{N}\Psi(c_i(t); c_{i,\infty})  \geq \sum_{i=1}^{N}\left(\sqrt{c_i(t)} - \sqrt{c_{i,\infty}}\right)^2 \geq \frac{1}{2}\sum_{i=1}^{N}c_i(t) - \sum_{i=1}^{N}c_{i,\infty}.
	 \end{equation*}
	 Thus for all $i=1,\ldots, N$ and all $t>0$,
	 \begin{equation}\label{KK}
	 	c_i(t) \leq \tilde{K}:= 2\left(E(\cc_0|\ww) + \sum_{i=1}^{N}c_{i,\infty}\right).
	 \end{equation}
	 We can therefore show that
	 \begin{align*}
	 	E(\cc(t)|\ww) &\geq \sum_{i=1}^{N}\left(\sqrt{c_i(t)} - \sqrt{c_{i,\infty}}\right)^2 = \sum_{i=1}^{N}\frac{(c_i(t) - c_{i,\infty})^2}{(\sqrt{c_i(t)} + \sqrt{c_{i,\infty}})^2} \\
		&\geq \frac{1}{(\sqrt{\tilde{K}} + \max_{i}\{\sqrt{c_{i,\infty}}\})^2}\sum_{i=1}^{N}|c_i(t) - c_{i,\infty}|^2
	 \end{align*}
	 and finally get \eqref{Converge_ODE}.
%
\end{proof}

\subsection{Convergence to equilibrium for PDE systems}\label{convergenceconvex}\hfill\\
We now begin the

\begin{proof}[Proof of Theorem \ref{EEDE}]

We now consider $\mathcal{E}(\cc|\ww)$ given by formula \eqref{FreeEnergy_PDE} and 
$\mathcal{D}(\cc)$ given by \eqref{ED_PDE} as stated in the introduction.
 \medskip
 
 	Note that  in the PDE setting, the entropy entropy-dissipation estimate \eqref{main} is a functional inequality in contrast to the finite dimensional inequality \eqref{lambda} in the ODE setting. 
Thus, the techniques used in Proposition \ref{ODE_Theorem} are necessary (since \eqref{main} includes the ODE case) yet far from sufficient.
In order to prove \eqref{main},
we first rewrite its right hand side by using the additivity property of the relative entropy, that is (remembering that $\overline{c_i} = \int_{\Omega} c_i$ and $\ww$
do not depend upon $x$):
	\begin{equation*}
			\mathcal{E}(\cc|\ww) = \sum_{i=1}^{N}\int_{\Omega}c_i\log{\frac{c_i}{\overline{c_i}}}dx + \sum_{i=1}^{N}\left(\overline{c_i}\log{\frac{\overline{c_i}}{c_{i,\infty}}} - \overline{c_i} + c_{i,\infty}\right)
			=\mathcal{E}(\cc|\overline{\cc}) + \mathcal{E}(\overline{\cc}|\ww).
	\end{equation*}
	In order to treat the first term $\mathcal{E}(\cc|\overline{\cc})$ in the above identity, we apply the Logarithmic Sobolev Inequality
	\begin{equation*}
		\int_{\Omega}\frac{|\nabla f|^2}{f}dx \geq C_{LSI}\int_{\Omega}f\log{\frac{f}{\overline{f}}}dx
	\end{equation*}
	to estimate
	\begin{equation}\label{lam1}
		\frac{1}{2}\mathcal{D}(\cc) \geq \frac12 \sum_{i=1}^N \int_{\Omega}d_i\frac{|\nabla c_i|^2}{c_i}dx \geq \lambda_1\mathcal{E}(\cc|\overline{\cc}),
	\end{equation}
	with $\lambda_1 = \frac 12 C_{LSI}\min_{i}\{d_i\}$.
	By denoting 
	\begin{equation}\label{322bis}
		G(\cc) := \frac{1}{2}\sum_{r=1}^{|\mathcal R|}k_r\ww^{y_r}\Psi\left(\frac{\cc^{y_r}}{\ww^{y_r}};\frac{\cc^{y_r'}}{\ww^{y_r'}}\right) \quad \text{ and } \quad F(\cc) := \lambda_1\sum_{i=1}^{N}\Psi(c_i;c_{i,\infty}),
	\end{equation}
	we estimate (using again the Logarithmic Sobolev Inequality and the additivity of the relative entropy) 
	\begin{align}\label{tt1}
		\frac{1}{2}\mathcal{D}(\cc) \geq \lambda_1\mathcal{E}(\cc|\overline{\cc}) + \int_{\Omega}G(\cc)dx &= \int_{\Omega}\underbrace{\left[F(\cc) + G(\cc)\right]}_{:= \Phi(\cc)}dx - F(\overline{\cc}) \nonumber \\
		&=: \int_{\Omega}\Phi(\cc)\,dx - F(\overline{\cc}).
	\end{align}
	Let $\widehat\Phi$ be the convexification of $\Phi$, that is,
	 $\widehat \Phi$ is the supremum of all affine functions below $\Phi$.
	 By noticing that $\Phi \ge \widehat{\Phi}$ and that $\widehat{\Phi}$ is convex,
	  we can now use Jensen's inequality (recall that $|\Omega| = 1$, as stated in the paragraph devoted to notations at the end of the introduction)
	   and get
	\begin{equation}\label{lam2}
		\frac{1}{2}\mathcal{D}(\cc) \geq \int_{\Omega}\widehat{\Phi}(\cc)\,dx - F(\overline{\cc}) \geq \widehat \Phi(\overline{\cc}) - F(\overline{\cc}).
	\end{equation}	
		It remains to show the following finite-dimensional inequality:
		\begin{equation*}
			\widehat{\Phi}(\overline{\cc}) - F(\overline{\cc}) \geq \lambda_2\, \mathcal{E}(\overline{\cc}|\ww),
		\end{equation*}
		for some positive constant $\lambda_2>0$. By using Jensen's inequality, we see that 
		\begin{equation*}
			\mathcal{E}(\overline{\cc}|\ww) \leq \mathcal{E}(\cc|\ww) \leq \mathcal{E}(\cc_0|\ww).
		\end{equation*} 
		We define, for some given $K>0$,
		\begin{equation*}			
			\mathfrak C_M := \{\boldsymbol{\xi}\in \mathbb R^N_{+}: \mathbb Q\, \boldsymbol{\xi} = M \quad \text{ and } \quad \mathcal{E}(\boldsymbol{\xi}|\ww) \leq K
			\}.
		\end{equation*}
	Our goal is now to prove the
	\begin{lemma}\label{lem:PDEfinite}
	\begin{equation}\label{lambda2}
		\lambda_2:= \inf_{\boldsymbol{\xi} \in \mathfrak{C}_M}\frac{\widehat \Phi({\boldsymbol{\xi}}) - F(\boldsymbol{\xi})}{\mathcal{E}({\boldsymbol{\xi}}|\ww)} >0.
	\end{equation}
	\end{lemma}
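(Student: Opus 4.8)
The plan is to follow the template of Lemma~\ref{intermediate}, viewing \eqref{lambda2} as a finite-dimensional ratio that can only degenerate either far from $\ww$ or in the limit $\boldsymbol{\xi}\to\ww$, while accounting for the convexification $\widehat\Phi$. First I would record two elementary facts. Since $G\ge0$ we have $F\le\Phi$, and $F=\lambda_1\sum_i\Psi(\xi_i;c_{i,\infty})$ is convex; as $\widehat\Phi$ is the largest convex minorant of $\Phi$, this gives $F\le\widehat\Phi$, so the numerator $\widehat\Phi-F\ge0$ and hence $\lambda_2\ge0$. Moreover the set $\mathfrak{C}_M$ is compact: the bound $\mathcal{E}(\boldsymbol{\xi}|\ww)\le K$ together with the superlinear growth of $\Psi(\cdot;c_{i,\infty})$ bounds each $\xi_i$ from above, and $\mathfrak{C}_M$ is closed because $\mathcal{E}$ extends continuously up to $\partial\mathbb{R}^N_{\geq 0}$. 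I would then argue by contradiction: if $\lambda_2=0$, there is a minimising sequence $\boldsymbol{\xi}_n\in\mathfrak{C}_M$ with $(\widehat\Phi(\boldsymbol{\xi}_n)-F(\boldsymbol{\xi}_n))/\mathcal{E}(\boldsymbol{\xi}_n|\ww)\to0$ which, after extraction, converges to some $\boldsymbol{\xi}_*\in\mathfrak{C}_M$.

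The heart of the proof is to show that the numerator vanishes only at $\ww$, i.e. that $\widehat\Phi(\boldsymbol{\xi}_0)=F(\boldsymbol{\xi}_0)$ with $\mathbb{Q}\,\boldsymbol{\xi}_0=M$ forces $\boldsymbol{\xi}_0=\ww$. Since $\Phi$ is coercive and lower semicontinuous, Carath\'eodory's theorem for the convex envelope yields an attained representation $\boldsymbol{\xi}_0=\sum_j\theta_j\boldsymbol{\zeta}_j$ (with $\theta_j\ge0$, $\sum_j\theta_j=1$, at most $N+1$ terms, all with $\Phi(\boldsymbol{\zeta}_j)<\infty$) such that $\widehat\Phi(\boldsymbol{\xi}_0)=\sum_j\theta_j\Phi(\boldsymbol{\zeta}_j)$. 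Writing $\Phi=F+G$, using Jensen's inequality for the convex $F$ and $G\ge0$,
\begin{equation*}
F(\boldsymbol{\xi}_0)=\widehat\Phi(\boldsymbol{\xi}_0)=\sum_j\theta_j F(\boldsymbol{\zeta}_j)+\sum_j\theta_j G(\boldsymbol{\zeta}_j)\ge F(\boldsymbol{\xi}_0)+\sum_j\theta_j G(\boldsymbol{\zeta}_j)\ge F(\boldsymbol{\xi}_0),
\end{equation*}
so equality holds throughout. Equality in Jensen's inequality for the \emph{strictly} convex $F$ (each $\Psi(\cdot;c_{i,\infty})$ has $\Psi''=1/\xi_i>0$) forces every $\boldsymbol{\zeta}_j$ with $\theta_j>0$ to equal $\boldsymbol{\xi}_0$, whence $\widehat\Phi(\boldsymbol{\xi}_0)=\Phi(\boldsymbol{\xi}_0)$ and $G(\boldsymbol{\xi}_0)=0$. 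By Prop.~\ref{cale} (recall $G=\tfrac12 D$), $G(\boldsymbol{\xi}_0)=0$ means $\boldsymbol{\xi}_0$ is a complex balance equilibrium; the absence of boundary equilibria makes it strictly positive, and uniqueness of the positive equilibrium in the mass class $\mathbb{Q}\,\boldsymbol{\xi}_0=M$ gives $\boldsymbol{\xi}_0=\ww$. Thus $\widehat\Phi-F>0$ on $\mathfrak{C}_M\setminus\{\ww\}$.

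It remains to exclude the two degeneration scenarios. If $\boldsymbol{\xi}_*\neq\ww$, then $\mathcal{E}(\boldsymbol{\xi}_n|\ww)$ stays bounded below, so $\widehat\Phi(\boldsymbol{\xi}_n)-F(\boldsymbol{\xi}_n)\to0$; lower semicontinuity of $\widehat\Phi$ together with continuity of $F$ yields $\widehat\Phi(\boldsymbol{\xi}_*)-F(\boldsymbol{\xi}_*)\le0$, hence $=0$, contradicting the previous paragraph. If instead $\boldsymbol{\xi}_*=\ww$, I would use that $\ww$ is the global minimiser of the $C^2$ function $\Phi$ with positive definite Hessian there (that of $F$ is $\lambda_1\,\mathrm{diag}(c_{i,\infty}^{-1})$, that of $G$ is positive semidefinite as $\ww$ minimises $G$), so $\Phi$ is strictly convex near $\ww$ and therefore $\widehat\Phi=\Phi$ on a neighbourhood of $\ww$. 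For $n$ large the numerator is then exactly $\Phi(\boldsymbol{\xi}_n)-F(\boldsymbol{\xi}_n)=G(\boldsymbol{\xi}_n)=\tfrac12 D(\boldsymbol{\xi}_n)$, and since $\mathbb{Q}(\boldsymbol{\xi}_n-\ww)=0$, the ratio $\tfrac12\,D(\boldsymbol{\xi}_n)/\mathcal{E}(\boldsymbol{\xi}_n|\ww)$ has a strictly positive $\liminf$ by the Taylor-expansion/linear-algebra computation of Lemma~\ref{intermediate} (the positive definiteness of $\mathbb{Q}\,\mathrm{diag}(\ww)\,\mathbb{Q}^{\top}$). This again contradicts the assumption, so $\lambda_2>0$.

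The main obstacle is the nondegeneracy step of the second paragraph: the convexification could a priori flatten $\widehat\Phi$ all the way down to $F$ on a large region, and the above Carath\'eodory argument is precisely what shows that it touches $F$ only at the equilibrium, the decisive ingredients being the \emph{strict} convexity of the entropy part $F$ and the characterisation of the zeros of $G$ via Prop.~\ref{cale}. A secondary technical point, needed to handle the limit $\boldsymbol{\xi}_n\to\ww$, is the local identity $\widehat\Phi=\Phi$ near $\ww$, which rests on $\ww$ being the global minimiser of $\Phi$ with positive definite Hessian.
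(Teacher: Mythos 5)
Your proof follows the paper's skeleton: nonnegativity of the numerator, reduction to sequences $\boldsymbol{\xi}\to\ww$, and then a local identity $\widehat\Phi=\Phi$ near $\ww$ so that the ratio becomes $G/\mathcal{E}$ and the Taylor/linear-algebra argument of Lemma \ref{intermediate} (i.e.\ \eqref{e9}) applies. Where you genuinely depart from the paper is in ruling out degeneration away from $\ww$: the paper argues via $\widehat\Phi-F\ge\widehat G$ together with the claim that $\widehat G$ vanishes on the mass class only at $\ww$, whereas you work directly with $\widehat\Phi-F$ through an attained Carath\'eodory representation and equality in Jensen's inequality for the strictly convex $F$. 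This is a real difference, and your version is the more robust one: $\widehat G$ is the envelope over the whole orthant, hence it vanishes on the convex hull of the equilibrium set $\{G=0\}$, and the intersection of that hull with $\{\mathbb Q\,\boldsymbol{\xi}=M\}$ can be strictly larger than $\{\ww\}$ (already for $\mathcal A\rightleftharpoons 2\mathcal B$ the equilibrium set is a parabola and this intersection is a whole segment). Your argument shows nonetheless that $\widehat\Phi-F$ itself vanishes on the mass class only at $\ww$, which is exactly what the proof needs. (Do record a justification of attainment in the Carath\'eodory representation --- superlinear growth of $F$ plus lower semicontinuity --- but this is standard.)

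There is, however, a gap at the step you call ``secondary'': the inference ``$\Phi$ is strictly convex near $\ww$ and therefore $\widehat\Phi=\Phi$ on a neighbourhood of $\ww$'' is not valid as stated. The convex envelope is a global object; local strict convexity of $\Phi$ near a point says nothing by itself about whether affine minorants of $\Phi$ can touch $\Phi$ there, since the envelope can be dragged down by values of $\Phi$ far away. The statement you need is precisely the paper's Lemma \ref{lem:conv} (taken from \cite{MiHaMa14} and reproved in the Appendix), and its proof is a genuine argument: one shows that for $\boldsymbol{a}$ close to $\ww$ the tangent plane $L_{\boldsymbol{a}}\Phi$ is a \emph{global} minorant of $\Phi$ on $\mathbb{R}^N_{\geq 0}$, using (i) $\Phi(\ww)=0$, $D\Phi(\ww)=0$ and local convexity, so that $L_{\boldsymbol{a}}\Phi\le\Phi$ on a fixed ball around $\ww$, and (ii) the strictly positive infimum of $\Phi$ outside that ball together with the superlinear growth of $F$, so that the small-slope, small-intercept plane $L_{\boldsymbol{a}}\Phi$ cannot cross $\Phi$ far away. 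Your hypotheses (unique global minimiser, positive definite Hessian, coercivity) are exactly the right ingredients and the claim is true, but the ``therefore'' hides the entire content of that lemma; as written this step is unproven. Once it is supplied, the rest of your argument --- reduction to $\liminf G/\mathcal{E}>0$ via the positive definiteness of $\mathbb{Q}\,\mathrm{diag}(\ww)\,\mathbb{Q}^{\top}$ as in Lemma \ref{intermediate} --- goes through.
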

	\begin{proof}
	We first observe that $\lambda_2\geq 0$. Indeed, using the
	inequality 
	 $\widehat{f + g} \geq \widehat{f} + \widehat{g}$ (see e.g. \cite{WCHL96}), we have
	\begin{equation*}
		\widehat{\Phi}(\boldsymbol\xi) - F(\boldsymbol{\xi}) \geq \widehat{F}(\boldsymbol\xi) + \widehat{G}(\boldsymbol\xi) - F(\boldsymbol\xi) = \widehat{G}(\boldsymbol\xi) \geq 0,
	\end{equation*}
	thanks to the convexity of $F$ (and the nonnegativity of $G$). On the other hand, since $\mathcal{E}(\boldsymbol\xi|\ww)$ is bounded above in $\mathfrak C_M$, then
	 $\lambda_2$ can be equal to zero only when 
	$$
		\widehat \Phi(\boldsymbol\xi) - F(\boldsymbol\xi) \rightarrow 0 \quad \text{ or equivalently } \quad \widehat{G}(\boldsymbol\xi) \rightarrow 0.
	$$
	Now using the fact that (when $\mathbb Q\,\boldsymbol\xi = M$)  $G(\boldsymbol\xi) = 0$ if and only if $\boldsymbol\xi = \ww$, it follows that $\widehat{G}(\boldsymbol\xi) = 0$ if and only if $\boldsymbol\xi = \ww$, thanks to the definition of the convexification.
	 Therefore $\lambda_2$ could only vanish for states $\mathfrak C_M \ni \boldsymbol\xi \rightarrow \ww$. 
	In other words, it is sufficient to prove that
	\begin{equation}\label{h1}
		\liminf_{\mathfrak{C}_M\ni \boldsymbol\xi\rightarrow \ww}\frac{\widehat{\Phi}(\boldsymbol\xi) - F(\boldsymbol\xi)}{\mathcal{E}(\boldsymbol\xi|\ww)} > 0.
	\end{equation}
We then use the
\medskip 

\begin{lemma}\label{lem:conv}\cite{MiHaMa14}\hfill\\
	Let $\Phi$ be defined by \eqref{tt1} and \eqref{322bis}. 
	
	Then, there exists $\delta>0$ such that for all $\boldsymbol\xi\in \mathbb{R}_{>0}^N\cap B(\ww, \delta)$ (ball centered at $\ww$ with radius $\delta$) the identity
	 $\widehat \Phi(\boldsymbol\xi) = \Phi(\boldsymbol\xi)$ holds.
\end{lemma}
\begin{proof}
The proof of this Lemma is given in \cite[Lemma 3.4]{MiHaMa14}. 
For the sake of completeness and the convenience of the reader, we also provide a proof in the Appendix of this paper. 

We point out that all estimates in the proof of Lemma \ref{lem:conv} are explicit. This means that
it is in principle possible to actually estimate the radius $\delta >0$ of the ball $B(\ww, \delta)$, on which  $\Phi$ and its convexification $\widehat{\Phi}$ are identical.
\end{proof}

	We continue the proof of Lemma \ref{lem:PDEfinite}. Thanks to Lemma \ref{lem:conv} above, we see that $\widehat{\Phi} \equiv \Phi$ in a neighborhood of $\ww$. Thus, \eqref{h1} is equivalent to 
	\begin{equation}\label{h2}
		\liminf_{\mathfrak{C}_M\ni \boldsymbol\xi \rightarrow \ww}\frac{G({\boldsymbol\xi})}{\mathcal{E}({\boldsymbol{\xi}}|\ww)} >0 ,
	\end{equation}	
which was already proven in Proposition \ref{ODE_Theorem} (see \eqref{e9} and note that $D$ in \eqref{e9} corresponds to $G$ in \eqref{h2}), up to a small modification ($E(\cc_0 | \ww)$ is replaced by some arbitrary $K>0$). 
	\end{proof}
	Therefore, from \eqref{lam1}, \eqref{lam2} and \eqref{lambda2}, we obtain the main estimate \eqref{main} with $\lambda  = \min\{\lambda_1, \lambda_2\}$.
\medskip

We first note that for the renormalised solutions considered in our theorem (that is, satisfying 
\eqref{integrability}), the entropy inequality 
$ \mathcal E(\bar{\cc}(t)|\ww) \le \mathcal E(\cc_0|\ww)$ holds, so that we can use  
 estimate \eqref{main}.
 We can then directly apply an integral form of Gronwall's lemma, see e.g. \cite{Wil} or more specifically \cite{FL15}, to finally ensure that
$$ \mathcal{E}(\cc(t)|\ww) \le e^{- \lambda\, t} \,  \mathcal{E}(\cc_0|\ww). $$
\medskip

Finally, we conclude the proof of Theorem \ref{e0} by using the
\medskip 

\begin{lemma}[Csisz\'{a}r-Kullback-Pinsker inequality, \cite{Arnold}]\label{CKPinequality}\hfill\\
	Assume that $\cc:\Omega \rightarrow \mathbb{R}_+$ is nonnegative and measurable and $\ww\in \mathbb{R}^N_{>0}$ such that $\mathbb Q\, \overline{\cc} = \mathbb Q\,\ww$. 
	
Then, there exists a constant $C_{CKP} >0$ depending only on $\Omega$ and $\mathbb Q$ such that
	\begin{equation*}
		C_{CKP}\sum_{i=1}^{N}\|c_i - c_{i,\infty}\|_{L^1(\Omega)}^2 \leq \mathcal{E}(\cc|\ww).
	\end{equation*}
\end{lemma}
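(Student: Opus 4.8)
The plan is to reduce the claimed inequality to a purely component-by-component estimate and then invoke the elementary bound \eqref{elest}, namely $\Psi(x,y)\ge(\sqrt{x}-\sqrt{y})^2$, which is already at our disposal. Writing the entropy as $\mathcal{E}(\cc|\ww)=\sum_{i=1}^N\int_\Omega \Psi(c_i;c_{i,\infty})\,dx$, it suffices to bound each $\|c_i-c_{i,\infty}\|_{L^1(\Omega)}^2$ by a constant times $\int_\Omega\Psi(c_i;c_{i,\infty})\,dx$ and to sum over $i$. The only global information about the full vector $\cc$ that I will need is an a priori upper bound on the spatial averages $\overline{c_i}$, which I intend to extract from the conservation constraint $\mathbb{Q}\,\overline{\cc}=\mathbb{Q}\,\ww$.

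For the component-wise estimate I would factor $|c_i - c_{i,\infty}| = |\sqrt{c_i}-\sqrt{c_{i,\infty}}|\,(\sqrt{c_i}+\sqrt{c_{i,\infty}})$ and apply the Cauchy--Schwarz inequality in $L^2(\Omega)$:
\begin{equation*}
\|c_i-c_{i,\infty}\|_{L^1(\Omega)}^2 \le \Big(\int_\Omega (\sqrt{c_i}-\sqrt{c_{i,\infty}})^2\,dx\Big)\Big(\int_\Omega (\sqrt{c_i}+\sqrt{c_{i,\infty}})^2\,dx\Big).
\end{equation*}
The first factor is controlled by $\int_\Omega\Psi(c_i;c_{i,\infty})\,dx$ thanks to \eqref{elest}, while the second factor is bounded using $(\sqrt{u}+\sqrt{v})^2\le 2(u+v)$ and $|\Omega|=1$ by $2\int_\Omega(c_i+c_{i,\infty})\,dx=2(\overline{c_i}+c_{i,\infty})$. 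This yields the key pointwise-in-$i$ bound
\begin{equation*}
\|c_i-c_{i,\infty}\|_{L^1(\Omega)}^2 \le 2(\overline{c_i}+c_{i,\infty})\int_\Omega\Psi(c_i;c_{i,\infty})\,dx .
\end{equation*}

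It then remains to bound $\overline{c_i}+c_{i,\infty}$ uniformly. Since $\ww$ is fixed, the terms $c_{i,\infty}$ are harmless, and the real point is to bound the averages $\overline{c_i}$. Here I would use that the network is conservative, i.e. that some strictly positive vector lies in the row span of $\mathbb{Q}$ (a positive combination of the conservation laws), so that $\mathbb{Q}\,\overline{\cc}=\mathbb{Q}\,\ww=M$ together with $\overline{c_i}\ge0$ forces $\overline{c_i}\le C(M,\mathbb{Q})$ for every $i$. Setting $C_{CKP}:=\big(2\max_i(C(M,\mathbb{Q})+c_{i,\infty})\big)^{-1}$ and summing the component-wise estimate over $i$ gives exactly $C_{CKP}\sum_i\|c_i-c_{i,\infty}\|_{L^1(\Omega)}^2\le \mathcal{E}(\cc|\ww)$.

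The main (and essentially the only) obstacle is precisely this uniform bound on $\overline{c_i}$: without it the inequality is false, since taking $c_i$ equal to a large constant makes the left-hand side grow quadratically in $\overline{c_i}$ while $\mathcal{E}$ grows only like $\overline{c_i}\log\overline{c_i}$. Consequently $C_{CKP}$ must genuinely depend on the mass $M$ (and on $\mathbb{Q}$), the decisive structural ingredient being the conservativity of the network that makes $M$ control all the averages. An equivalent route, should one prefer to keep the spatial and finite-dimensional mechanisms separate, is to use the additivity identity $\mathcal{E}(\cc|\ww)=\mathcal{E}(\cc|\overline{\cc})+\mathcal{E}(\overline{\cc}|\ww)$, apply the scalar Csisz\'ar--Kullback--Pinsker inequality to each $\mathcal{E}(\cc|\overline{\cc})$ to control $\|c_i-\overline{c_i}\|_{L^1(\Omega)}$, bound the finite-dimensional relative entropy $\mathcal{E}(\overline{\cc}|\ww)$ from below by $\sum_i(\sqrt{\overline{c_i}}-\sqrt{c_{i,\infty}})^2$, and recombine via the triangle inequality $\|c_i-c_{i,\infty}\|_{L^1(\Omega)}\le\|c_i-\overline{c_i}\|_{L^1(\Omega)}+|\overline{c_i}-c_{i,\infty}|$; this variant requires the same a priori bound on $\overline{c_i}$.
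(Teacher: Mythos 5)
Your argument is necessarily a different route from the paper's, because the paper does not prove Lemma \ref{CKPinequality} at all: it is quoted from \cite{Arnold}. Your self-contained chain is correct as far as it goes. The factorization $c_i-c_{i,\infty}=(\sqrt{c_i}-\sqrt{c_{i,\infty}})(\sqrt{c_i}+\sqrt{c_{i,\infty}})$, Cauchy--Schwarz, the elementary bound \eqref{elest}, and $|\Omega|=1$ indeed give $\|c_i-c_{i,\infty}\|_{L^1(\Omega)}^2\le 2\,(\overline{c_i}+c_{i,\infty})\int_\Omega\Psi(c_i;c_{i,\infty})\,dx$, so the whole lemma reduces to an upper bound on the averages $\overline{c_i}$, exactly as you say. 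Your scaling remark is also correct and important: without such a bound the inequality is false, and moreover $C_{CKP}$ must depend on $M$ (equivalently on $\ww$) and not merely on $\Omega$ and $\mathbb{Q}$ as the lemma's wording suggests --- replacing $(\cc,\ww)$ by $(t\cc,t\ww)$ scales the left-hand side by $t^2$ but the entropy only by $t$.

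The one genuine caveat is your appeal to conservativity (a strictly positive vector in the row span of $\mathbb{Q}$): this is not among the paper's standing hypotheses, and complex balanced networks need not satisfy it. For instance, $\mathcal{A}+\mathcal{B}\rightleftharpoons\emptyset$ is detailed balanced with no boundary equilibria, has $m=1$ and $\mathbb{Q}=(1,-1)$, and the set $\{\boldsymbol{\xi}\ge 0:\ \mathbb{Q}\boldsymbol{\xi}=\mathbb{Q}\ww\}$ is unbounded; taking $\cc=\ww+t(1,1)$ constant in space satisfies the constraint while the ratio of the two sides of the lemma blows up like $t/\log t$. So what you prove is a corrected statement (conservative networks, or an added a priori bound on $\overline{\cc}$), not the literal one --- but the literal one is unprovable, so this is a defect of the paper's formulation rather than of your proof. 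It is worth noting how the gap is closed where the lemma is actually used: in Thm.~\ref{EEDE} the solutions additionally satisfy $\mathcal{E}(\overline{\cc}|\ww)\le\mathcal{E}(\cc_0|\ww)$, which bounds $\overline{c_i}$ as in \eqref{KK} without any conservativity; and for the cyclic system and the $3\times3$ system the conservation laws \eqref{MassCons} and \eqref{mass3x3} have strictly positive coefficients, so there your argument applies verbatim. If you replace ``conservativity'' by ``the entropy upper bound available wherever the lemma is invoked,'' your argument becomes a complete, elementary substitute for the citation of \cite{Arnold}, with a constant depending on $\Omega$, $\mathbb{Q}$, $M$ (and that bound), consistent with your own conclusion.
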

\end{proof}


\subsection{A constructive entropy method and its application to a cycle of reactions}\label{sec:constructive}\hfill\\
Theorem \ref{EEDE} shows exponential convergence to equilibrium for the system \eqref{e0}, \eqref{Nin} when it is complex balanced and has no boundary equilibria. However, it does not give a quantitative estimate for the convergence rate $\lambda $. The reason is that the strictly positive limit in \eqref{h2}, which can be computed explicitly,
 is not necessary the positive infimum $\lambda_2$ in \eqref{lambda2}. Though this infimum comes out of an optimisation problem in finite dimension (and not from an abstract compactness argument in infinite dimension), it remains not explicitly computable.
 Therefore, in this subsection, we propose a constructive method to prove the entropy entropy-dissipation estimate \eqref{main} and to explicitly estimate the convergence rate $\lambda$.

This method builds on some recent ideas developed in \cite{FT15} for nonlinear detailed balance systems. It consists of four steps. The first three ones can be proven for any general complex balanced systems without boundary equilibria. The last step, being based on the structure of the mass conservation laws, is 
proven only for a specific network consisting of a cycle of reactions with arbitrary number of species, described in Thm. \ref{c_lem:eede}.
\medskip 

We begin therefore the 
\medskip

{\it{Proof of Thm. \ref{c_lem:eede}}} :
Since the three first steps of the proof hold for complex balance general systems without boundary equilibria, we use  the notations $\cc$, $\overline{\cc}$, $\ww$. For step four we change the notation to ${\bf{a}}$, ${\bf{\bar{a}}}$, ${\bf{a}}_{\infty}$ to underline that this step is specific to system 
\eqref{c_cycle_0}.

\medskip 

\noindent{\bf Step 1} (Additivity of the relative entropy and Logarithmic Sobolev Inequality)\\
As in the proof of Thm. \ref{EEDE}, we use the additivity property
		(for $\mathcal{E}$ given by formula \eqref{FreeEnergy_PDE}):
		\begin{equation*}	
			\mathcal{E}(\cc|\ww) = \mathcal{E}(\cc|\overline{\cc}) + \mathcal{E}(\overline{\cc}|\ww),
		\end{equation*}
		and control the term $\mathcal{E}(\cc|\overline{\cc})$ via the Logarithmic Sobolev Inequality as in \eqref{lam1} (with $\mathcal{D}$ defined by formula \eqref{ED_PDE}), i.e.
		\begin{equation*}
			\frac12 \mathcal{D}(\cc) \geq \lambda_1 \mathcal{E}(\cc|\overline{\cc}),
		\end{equation*}
		for an explicit constant $\lambda_1>0$. It remains therefore to control $\mathcal{E}(\overline{\cc}|\ww)$.
		\medskip
	
\noindent{\bf Step 2} (Upper and lower bounds using quadratic terms of square root concentrations)\\
		We estimate $\mathcal{D}(\cc)$ and $\mathcal{E}(\overline{\cc}|\ww)$ by quadratic terms involving
		 the square roots of concentrations, which are significantly easier to handle than logarithmic terms. 
Recalling the short hand notation $C_i := \sqrt{c_{i}}$ and $C_{i,\infty} := \sqrt{c_{i,\infty}}$ for square roots of concentrations, we estimate first
		\begin{equation}\label{sstep2}
			\frac12\mathcal{D}(\cc) \geq 2\min_{j}\{d_j\}\sum_{i=1}^{N} \|\nabla C_i\|^2  + \frac12\sum_{r=1}^{R}k_r\ww^{y_r}\biggl\|\frac{\CC^{y_r}}{\WW^{y_r}} - \frac{\CC^{y_r'}}{\WW^{y_r'}}\biggr\|^2,
		\end{equation}
thanks to inequality \eqref{elest}. Now, by recalling the conservation of mass \eqref{MassCons} and the nonnegativity of the solution, we see that $\overline{c_i}\leq M$ for all $i=1,\ldots, N$. Moreover, since $\Psi(\cdot, y)/(\sqrt{\cdot} - \sqrt{y})^2$ is increasing for each fixed $y>0$, the elementary estimate 
	\begin{equation*}
		\frac{\Psi(x;y)}{(\sqrt{x} - \sqrt{y})^2} \leq \frac{\Psi(M;y)}{(\sqrt{M} - \sqrt{y})^2}
		\end{equation*}
	 holds for all $x\in (0,M]$ and $y>0$,
and	we can estimate $\mathcal{E}(\overline{\cc}|\ww)$ as follows:
		\begin{equation}\label{E2quad}
			\mathcal{E}(\overline{\cc}|\ww) \leq \sum_{i=1}^{N}\frac{\Psi(M;c_{i,\infty})}{\bigl(\sqrt{M} - \sqrt{c_{i,\infty}}\bigr)^2}\left(\sqrt{\overline{C_i^2}} - C_{i,\infty}\right)^2 \leq K_2\sum_{i=1}^{N}\left(\sqrt{\overline{C_i^2}} - C_{i,\infty}\right)^2,
		\end{equation}
 where $K_2$ depends only on $M$ and $\ww$.
 		\medskip

\noindent{\bf Step 3} (Control of the reaction dissipation term via a reaction dissipation term for spatial averages)\\	
As another step towards exploiting the mass conservation laws, we estimate $\mathcal{D}(\cc)$ further by
		\begin{equation}\label{Dquad}
			\frac12\mathcal{D}(\cc) \geq K_1\biggl(\,\sum_{i=1}^{N}\|\nabla C_i\|^2 + \sum_{i=1}^{N}\biggl(\frac{\overline{\CC}^{y_r}}{\WW^{y_r}} - \frac{\overline{\CC}^{y_r'}}{\WW^{y_r'}}\biggr)^{\!2}\,\biggr)
		\end{equation}
		for an explicit constant $K_1>0$. This technical step follows from \cite[Lemma 2.6]{FT15} with slight modifications. 
			For the sake of completeness, we recall here the main arguments of the proof. \textcolor{black}{The proof makes use of a domain decomposition corresponding to the deviation to the averages of $C_i$, i.e. by denoting $\delta_i(x) = C_i(x) - \overline{C}_i$, we consider the decomposition 
			\begin{equation*}
				\Omega = S \cup S^{c}
			\end{equation*}
			where $S = \{x\in\Omega: |\delta_i(x)| \leq L \text{ for all } i = 1,\ldots, N \}$ with a fixed constant $L>0$ (which can be chosen arbitrarily). On the set $S$, thanks to a Taylor expansion, we have
			\begin{equation*}
				\frac{\mathbf{C}^{y_r}}{\mathbf{C}_{\infty}^{y_r}} = \frac{1}{\mathbf{C}_{\infty}^{y_r}}\prod_{i=1}^{N}\left(\overline{C_i} + \delta_i\right)^{y_{r,i}} = \frac{\overline{\mathbf{C}}^{y_r}}{\mathbf{C}_{\infty}^{y_r}} + \widetilde{R}(\overline{C_1},.., \overline{C_N}, \delta_1,.., \delta_N)\sum_{i=1}^{N}\delta_i
			\end{equation*}
			with $|\widetilde{R}(\overline{C_i}, \delta_i)| \leq C(L)$ thanks to the upper bounds {$\overline{C}_i\le \sqrt{\overline{c_i}} \le \sqrt{\tilde{K}}$} (obtained by a computation similar to (\ref{KK})) and $|\delta_i(x)| \leq L$ in $S$. It thus follows that
			\begin{equation}\label{z1}
				\frac12\sum_{r=1}^{R}k_r\ww^{y_r}\left\|\frac{\mathbf{C}^{y_r}}{\mathbf{C}_{\infty}^{y_r}} - \frac{\mathbf{C}^{y_r'}}{\mathbf{C}_{\infty}^{y_r'}}\right\|_{L^2(S)}^2 \geq  \beta_1\sum_{r=1}^{R}\left[\frac{\overline{\mathbf{C}}^{y_r}}{\mathbf{C}_{\infty}^{y_r}} - \frac{\overline{\mathbf{C}}^{y_r'}}{\mathbf{C}_{\infty}^{y_r'}}\right]^2|S| - \beta_2\sum_{i=1}^{N}\|\delta_i\|_{L^2(S)}^2
			\end{equation}
			for some constants $\beta_1, \beta_2>0$.
On the other hand, by using the lower bounds $|\delta_i|\ge L$ on $S^{c}$ for some $1\leq i\leq N$, the boundedness of $\overline{C_i}$, and Poincar\'e's inequality,  it follows that
			\begin{equation}\label{z2}
				\frac12\min_{j}\{d_j\}\sum_{i=1}^{N}\|\nabla C_i\|^2 \geq \beta_3\sum_{i=1}^{N}\|\delta_i\|^2_{L^2(\Omega)} \geq \beta_3 N L^2 |S^c| \geq  \beta_4\sum_{r=1}^{R}\left[\frac{\overline{\mathbf{C}}^{y_r}}{\mathbf{C}_{\infty}^{y_r}} - \frac{\overline{\mathbf{C}}^{y_r'}}{\mathbf{C}_{\infty}^{y_r'}}\right]^2|S^{c}|
			\end{equation}
			for constants $\beta_3, \beta_4 >0$. Note that all constants $\beta_1, \beta_2, \beta_3, \beta_4$ are independent of $S$.
			Now, a combination of \eqref{sstep2}, \eqref{z1} and \eqref{z2} leads to, for any $\theta \in (0,1)$,
			\begin{equation*}
				\begin{aligned}
					\frac{\mathcal{D}(\cc)}{2} &\geq \min_j\{d_j\}\sum_{i=1}^{N}\|\nabla C_i\|^2 + \!\left(\!\beta_3\!\sum_{i=1}^{N}\|\delta_i\|^2_{L^2(\Omega)} + \frac{\min_j\{d_j\}}{2}\sum_{i=1}^{N}\|\nabla C_i\|^2 + \frac{\theta}{2}\sum_{r=1}^{R}k_r\ww^{y_r}\left\|\frac{\mathbf{C}^{y_r}}{\mathbf{C}_{\infty}^{y_r}} - \frac{\mathbf{C}^{y_r'}}{\mathbf{C}_{\infty}^{y_r'}}\right\|^2\right)\\
					&\geq \min_j\{d_j\}\sum_{i=1}^{N}\|\nabla C_i\|^2 + \left(\beta_3 - \theta \beta_2\right)\sum_{i=1}^{N}\|\delta_i\|^2_{L^2(S)} + \min\{\theta\beta_1, \beta_4\}\sum_{r=1}^{R}\left[\frac{\overline{\mathbf{C}}^{y_r}}{\mathbf{C}_{\infty}^{y_r}} - \frac{\overline{\mathbf{C}}^{y_r'}}{\mathbf{C}_{\infty}^{y_r'}}\right]^2,
				\end{aligned}
			\end{equation*}
where we recall that $|S|+|S^{c}|=|\Omega|=1$.
			Hence, \eqref{Dquad} follows by choosing $L$, $0 < \theta < \min\{ \beta_3\beta_2^{-1}, 1\}$ and $K_1 = \min\{\theta\beta_1, \beta_4, \min_{j}\{d_j\}\}$.}
		\medskip
		
\noindent{\bf Step 4} (Conservation laws and deviations of averaged concentration around the equilibrium)\\
Since this step is specific to the cyclic reaction \eqref{c_cycle_0}, we change the notation to ${\mathbf{a}}$, $\overline{\mathbf{a}}$, ${\mathbf{a}}_{\infty}$, ${\mathbf{A}}$, etc. in the remainder of the proof. Moreover, we recall the notations
 $A_i = \sqrt{a_i}$, $A_{i,\infty} = \sqrt{a_{i,\infty}}$, and use the periodic notation $a_{N+1} := a_1$ or $a_{N+1,\infty} := a_{1,\infty}$, etc.
\medskip

\medskip 

First, we recall that  the system \eqref{c_cycle_0} is indeed a complex balanced system satisfying exactly 
one conservation law (corresponding to the conservation of the total mass): $\sum_{i=1}^N \frac{1}{\alpha_i}\int_{\Omega} a_i = M$. Indeed, the following
lemma holds:
\medskip 

	
	
	
	\begin{lemma}\label{c_equilibrium}
		For any strictly positive initial mass $M>0$, there exists a unique strictly positive complex balance equilibrium $\mathbf{a}_{\infty} = (a_{1,\infty}, a_{2,\infty}, \ldots, a_{N,\infty})$ to the system \eqref{c_cycle_0}, which solves
		\begin{equation}\label{sisc}
		\begin{cases}
		k_1a_{1,\infty}^{\alpha_1} = k_2a_{2,\infty}^{\alpha_2} = \ldots = k_Na_{N,\infty}^{\alpha_N},\\
		\frac{a_{1,\infty}}{\alpha_1} + \frac{a_{2,\infty}}{\alpha_2} + \ldots + \frac{a_{N,\infty}}{\alpha_N} = M.
		\end{cases}
		\end{equation}
	\end{lemma}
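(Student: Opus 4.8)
The plan is to first turn the complex balance condition \eqref{ComplexBalance} into a concrete algebraic system for the cyclic network, and then to solve this system by reducing it to a single scalar equation. For the network \eqref{c_cycle_0} the complexes are precisely $\alpha_i\mathcal{A}_i$ (i.e. the stoichiometric vectors having $\alpha_i$ in the $i$-th slot and zero elsewhere), and each such complex is the source of exactly one reaction (the one leaving $\alpha_i\mathcal{A}_i$, with rate $k_i$) and the target of exactly one reaction (the one entering $\alpha_i\mathcal{A}_i$, with rate $k_{i-1}$, indices understood cyclically so that $k_0:=k_N$ and $a_{0,\infty}:=a_{N,\infty}$). Reading off \eqref{ComplexBalance} at the complex $\alpha_i\mathcal{A}_i$ therefore gives the single relation $k_i a_{i,\infty}^{\alpha_i}=k_{i-1}a_{i-1,\infty}^{\alpha_{i-1}}$ for each $i$, and the collection of these relations is exactly equivalent to the chain of equalities in the first line of \eqref{sisc}. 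Conversely, any strictly positive vector obeying this chain satisfies the complex balance condition. Combining this with the mass conservation law \eqref{MassCons}, I obtain that the strictly positive complex balance equilibria of mass $M$ are precisely the strictly positive solutions of \eqref{sisc}.

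Next I would parametrise the solution set of the chain of equalities. Denoting the common value by $\zeta := k_1 a_{1,\infty}^{\alpha_1}=\cdots=k_N a_{N,\infty}^{\alpha_N}$, which is necessarily strictly positive for a positive equilibrium, one inverts each relation to get $a_{i,\infty}=(\zeta/k_i)^{1/\alpha_i}$. Substituting into the second line of \eqref{sisc} collapses the whole system into the single scalar equation
\begin{equation*}
	f(\zeta) := \sum_{i=1}^{N}\frac{1}{\alpha_i}\left(\frac{\zeta}{k_i}\right)^{1/\alpha_i} = M,
\end{equation*}
so that existence and uniqueness of $\mathbf{a}_{\infty}$ is equivalent to existence and uniqueness of a strictly positive root $\zeta^{*}$ of $f(\zeta)=M$.

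Finally I would establish this root by elementary one-dimensional analysis of $f$ on $(0,+\infty)$. Each summand $\zeta\mapsto \alpha_i^{-1}(\zeta/k_i)^{1/\alpha_i}$ is continuous and strictly increasing, tends to $0$ as $\zeta\to 0^{+}$ and to $+\infty$ as $\zeta\to+\infty$; hence so does $f$, which is therefore a strictly increasing continuous bijection from $(0,+\infty)$ onto $(0,+\infty)$. Consequently, for the prescribed $M>0$ there is exactly one $\zeta^{*}>0$ with $f(\zeta^{*})=M$, and setting $a_{i,\infty}:=(\zeta^{*}/k_i)^{1/\alpha_i}$ yields the unique strictly positive complex balance equilibrium of mass $M$. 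Since such an equilibrium exists for every $M>0$, the network \eqref{c_cycle_0} is complex balanced.

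There is no genuine analytic obstacle here: the argument is essentially bookkeeping followed by a monotonicity/intermediate-value argument in one variable. The only point demanding a little care is the first step, namely correctly identifying, for each complex, which reactions enter the in- and out-flow sums in \eqref{ComplexBalance}, so that the complex balance condition collapses exactly to the chain $k_i a_{i,\infty}^{\alpha_i}=k_{i-1}a_{i-1,\infty}^{\alpha_{i-1}}$ (and not to some weaker or different relation); the remaining existence and uniqueness then follow for free.
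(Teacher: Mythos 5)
Your proof is correct and follows essentially the same route as the paper's: both reduce the system \eqref{sisc} to a single scalar equation $f=M$ with $f$ continuous, strictly increasing, vanishing at $0$ and tending to $+\infty$, and conclude by monotonicity. The only (harmless) differences are that you parametrise the solution set by the common flux $\zeta = k_i a_{i,\infty}^{\alpha_i}$ whereas the paper uses $a_{1,\infty}$ itself (a strictly increasing change of variables), and that you additionally verify that the general complex balance condition \eqref{ComplexBalance} collapses for the cycle to the chain of equalities in \eqref{sisc} — a step the paper's proof takes for granted.
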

	\begin{proof}
	Using the first equation of \eqref{sisc},  we see that
	 $a_{i,\infty} = \left(\frac{k_1}{k_i}a_{1,\infty}^{\alpha_1}\right)^{1/\alpha_i}$ for all $i=2,3,\ldots, N$. Hence, $a_{1,\infty}$ solves
		\begin{equation*}
		\frac{a_{1,\infty}}{\alpha_1}  +\frac{1}{\alpha_2}\left(\frac{k_1}{k_2}a_{1,\infty}^{\alpha_1}\right)^{1/\alpha_2} + \ldots + \frac{1}{\alpha_N}\left(\frac{k_1}{k_N}a_{1,\infty}^{\alpha_1}\right)^{1/\alpha_N} = M.
		\end{equation*}
Denoting the left hand side of this equation by $f(a_{1,\infty})$, we see that $z \mapsto f(z)$ is a strictly increasing function on $[0,+\infty)$ with $f(0) = 0$ and $\lim_{z\rightarrow+\infty}f(z) = +\infty$. Thus, for any $M>0$, there exists a unique solution $a_{1,\infty}$ to the equation $f(z) = M$, which completes the proof of the lemma.
	\end{proof}
	
\medskip	
	
In the case of a cyclic reaction \eqref{c_cycle_0}, the relative entropy is specifically defined by \eqref{ee1},
while the corresponding entropy dissipation is given by \eqref{ee2}.

Moreover, following the above general method, 
it will be convenient for the readability of the proof to briefly restate the required key estimates of 
Steps 1-3:
First, we see that Step 1 becomes 
		\begin{equation}\label{c_e1}
			\mathcal{E}({\mathbf{a}}|{\mathbf{a}}_{\infty}) = \mathcal{E}({\mathbf{a}}|\overline{\mathbf{a}}) + \mathcal{E}(\overline{\mathbf{a}}|{\mathbf{a}}_{\infty}),
		\end{equation}	
		and, thanks to the logarithmic Sobolev inequality,
		\begin{equation}\label{c_e5}
			\frac12 \mathcal{D}({\mathbf{a}}) \geq \lambda_1\mathcal{E}({\mathbf{a}}|\overline{\mathbf{a}}).
		\end{equation}
		Step 2 shows that $\mathcal{D}({\mathbf{a}})$ and $\mathcal{E}(\overline{\mathbf{a}}|{\mathbf{a}}_{\infty})$ are controlled by quadratic terms of square roots of concentrations 
		\begin{align}\label{c_e6_0}
			\mathcal{D}({\mathbf{a}})&\geq 4\min_{j}\{d_j\}\sum_{i=1}^{N}\|\nabla A_i\|^2 + \sum_{i=1}^{N}k_{i}a_{i,\infty}^{\alpha_i}\biggl\|\frac{{A}_i^{\alpha_i}}{A_{i,\infty}^{\alpha_i}} - \frac{{A}_{i+1}^{\alpha_{i+1}}}{A_{{i+1},\infty}^{\alpha_{i+1}}}\biggr\|^2,\\
		\label{c_e7}
			\mathcal{E}(\overline{\mathbf{a}}|{\mathbf{a}}_{\infty})&\leq K_2\, \sum_{i=1}^N \left(\sqrt{\overline{A_i^2}} - A_{i,\infty}\right)^2.
		\end{align}
		In Step 3, we obtain moreover,
		\begin{equation}\label{c_e6}
			\mathcal{D}(\mathbf a)\geq K_1\biggl(\,\sum_{i=1}^{N}\|\nabla A_i\|^2 + \sum_{i=1}^{N}\biggl(\frac{\overline{A}_i^{\alpha_i}}{A_{i,\infty}^{\alpha_i}} - \frac{\overline{A}_{i+1}^{\alpha_{i+1}}}{A_{{i+1},\infty}^{\alpha_{i+1}}}\biggr)^2\,\biggr).
		\end{equation}
\medskip
		
In order to complete the proof of Thm. \ref{c_lem:eede} in the final Step 4, it therefore remains to show that
		\begin{equation}\label{remain}
			\sum_{i=1}^{N}\|\nabla A_i\|^2 + \sum_{i=1}^{N}\biggl(\frac{\overline{A}_i^{\alpha_i}}{A_{i,\infty}^{\alpha_i}} - \frac{\overline{A}_{i+1}^{\alpha_{i+1}}}{A_{{i+1},\infty}^{\alpha_{i+1}}}\biggr)^2
			\geq K_3 \sum_{i=1}^{N} \left(\sqrt{\overline{A_i^2}} - A_{i,\infty}\right)^2.
		\end{equation}
We recall the definition $\delta_i(x) = A_i(x) - \overline{A}_i$, which implies $\|\delta_i\|^2 = \overline{A_i^2} - \overline{A}_i^2$ for $i=1,\ldots, N$. By using the ansatz 
		\begin{equation}\label{c_ansatz}
		\overline{A_i^2} = A_{i,\infty}^2(1+\mu_i)^2 \quad \text{ with } \quad \mu_i\in [-1,+\infty)\qquad\text{for}\quad i=1,\ldots, N, 
		\end{equation}
		we compute
		\begin{equation*}
		\overline{A}_i = A_{i,\infty}(1+\mu_i) - \frac{\|\delta_i\|^2}{\sqrt{\overline{A_i^2}} + \overline{A}_i} =: A_{i,\infty}(1+\mu_i) - \|\delta_i\|^2R(A_i) \qquad\text{for all}\quad i=1,2,\ldots, N,
\end{equation*}
where we have defined $R(A_i):= \Bigl(\sqrt{\overline{A_i^2}} + \overline{A}_i\Bigr)^{-1}$.
Moreover, thanks to the mass conservation law $\sum_{i=1}^N \frac{1}{\alpha_i}\int_{\Omega} a_i(x)\, dx = M = \sum_{i=1}^{N}\frac{1}{\alpha_i}a_{i,\infty}$, we know that
		\begin{equation}\label{mass_cycle}
			\sum_{i=1}^{N}\frac{1}{\alpha_i}A_{i,\infty}^2\,\mu_i(\mu_i + 2) = 0.
		\end{equation}
Next, we can always fix an $\varepsilon>0$ small enough such that
		\begin{equation}\label{gamma}
			\gamma:= \min_{i,j\in\{1,..,N\}}\left\{\frac{1}{2^{\alpha_j+1}A_{j,\infty}^{2\alpha_j}}\left(\frac{M - \varepsilon^2/\alpha_{i}}{N - 1}\right)^{\alpha_j} - \frac{\varepsilon^{2\alpha_i}}{A_{i,\infty}^{2\alpha_i}}\right\} >0,
		\end{equation}
		and consider two cases.

\begin{description}[topsep=5pt, leftmargin=7mm]
\item[Case 4.1] If $\overline{A_i^2} \geq \varepsilon^2$ for all $i=1,\ldots, N$, then $R(A_i)\leq {\varepsilon}^{-1}$ for $i=1,2,\ldots, N$. 
Note that 
$$
	\mu_i = -1 + \frac{\sqrt{\overline{A_i^2}}}{A_{i,\infty}} \leq -1 + \frac{\sqrt{\alpha_i M}}{A_{i,\infty}}
$$
thanks to \eqref{c_ansatz} and $\|\delta_i\| \leq \sqrt{\overline{A_i^2}} \leq \sqrt{\alpha_i M}$.
Moreover, we use the ansatz \eqref{c_ansatz} and the Taylor expansion 
\begin{equation*}
	\frac{\overline{A}_i^{\alpha_i}}{A_{i,\infty}^{\alpha_i}} = \left((1+\mu_i) - \frac{\|\delta_i\|^2R(A_i)}{A_{i,\infty}}\right)^{\alpha_i}
	 = (1+\mu_i)^{\alpha_i} - \widetilde{R}_i\left(\mu_i, R(A_i), \|\delta_i\|, \alpha_i, A_{i,\infty}\right)\|\delta_i\| 
\end{equation*}
with a bounded remainder term $\widetilde{R}_i\left(\mu_i, R(A_i), \|\delta_i\|, \alpha_i, A_{i,\infty}\right)\leq C(\varepsilon)$ thanks to the bounds on $\mu_i$, $R(A_i)$ and $\|\delta_i\|$.
This yields for a $\theta\in (0,1)$ (to be chosen) the lower bound  
			\begin{equation}\label{t1}
				\begin{aligned}
				\text{LHS of \eqref{remain}}
					&\geq  \sum_{i=1}^{N}\|\nabla A_i\|^2 + 
					\theta\sum_{i=1}^{N}\biggl[\left((1+\mu_i)^{\alpha_i} - \widetilde{R}_i\|\delta_i\|\right)\\
					&\hspace{5cm} -  \left((1+\mu_{i+1})^{\alpha_{i+1}} - \widetilde{R}_{i+1}\|\delta_{i+1}\|\right)\biggr]^2\\
					&\geq \sum_{i=1}^{N}\|\nabla A_i\|^2 + \frac{\theta}{2} \sum_{i=1}^{N}\bigg((1+\mu_i)^{\alpha_i} - (1 +\mu_{i+1})^{\alpha_{i+1}}\bigg)^2 - \theta C(\varepsilon)\sum_{i=1}^{N}\|\delta_i\|^2\\
					&\geq \frac{\theta}{2}\sum_{i=1}^{N}\bigg((1+\mu_i)^{\alpha_i} - (1+\mu_{i+1})^{\alpha_{i+1}}\bigg)^2 ,
				\end{aligned}
			\end{equation}
where we have applied the Poincar\'{e}'s inequality and chosen $\theta$ small enough in the last step.
Moreover, the right hand side of \eqref{remain} is bounded above by
			\begin{equation}\label{t2}
			K_3 \max_{i=1,\ldots, N}\{A_{i,\infty}^2\}\sum_{i=1}^{N}\mu_i^2. 
			\end{equation} 
			Thanks to Lemma \ref{finite_cycle} below, we obtain \eqref{remain} from \eqref{t1} and \eqref{t2} by choosing 
			\begin{equation}\label{K3_1}
				K_3 \leq \frac{\theta}{2N^N\max_{i=1,\ldots,N}\{A_{i,\infty}^2\}}.
			\end{equation}
			
	\begin{lemma}\label{finite_cycle}
		Assume that $\mu_i \in [-1,+\infty)$ and $\alpha_i \in \mathbb N_{>0}$, $i=1,2,\ldots, N$ satisfy (for some $A_{i,\infty} \ge 0$),
		\begin{equation}\label{mass_cycle_1}
			\sum_{i=1}^{N}\frac{1}{\alpha_i}A_{i,\infty}^2(\mu_i^2 + 2\mu_i) = 0.
		\end{equation}
		Then, 
		\begin{equation}\label{c_e2_1}
			\sum_{i=1}^{N}\Bigl((1+\mu_i)^{\alpha_i} - (1+\mu_{i+1})^{\alpha_{i+1}}\Bigr)^2 \geq \frac{1}{N^{N}}\sum_{i=1}^{N}\mu_i^2.
		\end{equation}
	\end{lemma}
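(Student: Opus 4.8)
The plan is to reduce the inequality to a statement about the auxiliary variables $z_i := (1+\mu_i)^{\alpha_i}\ge 0$ (with the periodic convention $z_{N+1}:=z_1$), since the left-hand side of \eqref{c_e2_1} is then exactly the cyclic sum $\sum_{i=1}^N(z_i-z_{i+1})^2$. If all $\mu_i$ vanish both sides are zero, so I may assume this is not the case. First I would record the pointwise comparison $|z_i-1|\ge|\mu_i|$, valid for every $i$: for $\mu_i\ge 0$ it follows from Bernoulli's inequality $(1+\mu_i)^{\alpha_i}\ge 1+\alpha_i\mu_i\ge 1+\mu_i$ (using $\alpha_i\ge 1$), while for $-1\le\mu_i<0$, setting $t=1+\mu_i\in[0,1)$ and using $1-t^{\alpha_i}=(1-t)\sum_{k=0}^{\alpha_i-1}t^k\ge 1-t$ gives $1-z_i\ge|\mu_i|$. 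In particular $\sum_i\mu_i^2\le\sum_i(z_i-1)^2$, so it suffices to bound $\sum_i(z_i-1)^2$ by the cyclic sum.

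The essential role of the constraint \eqref{mass_cycle_1} is to locate the value $1$ between the smallest and largest of the $z_i$. Rewriting $\mu_i^2+2\mu_i=(1+\mu_i)^2-1$, the constraint reads $\sum_i w_i\big((1+\mu_i)^2-1\big)=0$ with nonnegative weights $w_i:=\tfrac{1}{\alpha_i}A_{i,\infty}^2$ which are not all zero (in the application all $A_{i,\infty}=\sqrt{a_{i,\infty}}>0$, the equilibrium of Lemma \ref{c_equilibrium} being strictly positive). If $\min_i z_i>1$, then $1+\mu_i>1$, hence $\mu_i>0$ and $\mu_i^2+2\mu_i>0$ for all $i$, producing a strictly positive weighted sum and a contradiction; symmetrically $\max_i z_i<1$ would force every term strictly negative. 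Hence $\min_i z_i\le 1\le\max_i z_i$, and since each $z_i$ together with the value $1$ lies in $[\min_j z_j,\,\max_j z_j]$, I obtain $|z_i-1|\le \mathrm{osc}(z):=\max_j z_j-\min_j z_j$ for every $i$.

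It then remains to control the oscillation by the cyclic gradient. Joining an index achieving $\max_j z_j$ to one achieving $\min_j z_j$ along the shorter arc of the cycle (which traverses at most $N$ of the edges $(i,i+1)$) and applying Cauchy--Schwarz gives $\mathrm{osc}(z)^2\le N\sum_{i=1}^N(z_i-z_{i+1})^2$. Chaining the three bounds yields
\[
\sum_{i=1}^N\mu_i^2\ \le\ \sum_{i=1}^N(z_i-1)^2\ \le\ N\,\mathrm{osc}(z)^2\ \le\ N^2\sum_{i=1}^N(z_i-z_{i+1})^2 ,
\]
and since $N^2\le N^N$ for $N\ge 2$, this proves \eqref{c_e2_1} --- in fact with the sharper constant $N^{-2}$.

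The elementary ingredients (the pointwise bound and the Cauchy--Schwarz/discrete Poincar\'e argument on the cycle) are routine. The step I expect to be the main obstacle is the sandwiching $\min_i z_i\le 1\le\max_i z_i$: this is precisely where the mass-conservation constraint is used, through the sign of $\mu_i^2+2\mu_i$, and it tacitly requires the weights $A_{i,\infty}^2$ not to vanish simultaneously. Everything else is a direct chain of elementary estimates.
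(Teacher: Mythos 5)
Your proposal is correct, but it takes a genuinely different route from the paper's proof. Writing $z_i=(1+\mu_i)^{\alpha_i}$, the paper first passes from the cyclic differences to \emph{all pairwise} differences, $\sum_i (z_i-z_{i+1})^2 \geq N^{-N}\sum_{i\neq j}(z_i-z_j)^2$, via repeated use of $a^2+b^2\geq \tfrac12(a+b)^2$; it then uses the constraint to split the indices into a nonempty set $I_0$ where $\mu_i\geq 0$ and its nonempty complement where $\mu_j<0$, and estimates only the cross pairs, $(1+\mu_i)^{\alpha_i}-(1+\mu_j)^{\alpha_j}\geq \mu_i-\mu_j\geq 0$, hence $(z_i-z_j)^2\geq(\mu_i-\mu_j)^2\geq \mu_i^2+\mu_j^2$, which after summation recovers $\sum_i\mu_i^2$. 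You instead use the value $1$ as a pivot: the same monotonicity ingredient gives the pointwise bound $|z_i-1|\geq|\mu_i|$, the constraint is used only to sandwich $1$ between $\min_i z_i$ and $\max_i z_i$, and the deviations from the pivot are then controlled by the oscillation, which a Cauchy--Schwarz (discrete Poincar\'e) argument along the cycle bounds by the cyclic differences. The two proofs exploit the conservation law in the same weak way --- the paper's remark after the lemma states that only the fact that the $\mu_i$ cannot all have one strict sign is used, and your sandwiching is the equivalent statement in the $z$ variables --- but your route buys two things: a sharper constant $N^{-2}$ instead of $N^{-N}$ (which would propagate into a larger admissible $K_3$ in \eqref{K3_1} and hence a better explicit rate), and slightly more robustness, since your contradiction argument needs only \emph{one} weight $A_{i,\infty}^2$ to be nonzero, while the paper's splitting into two nonempty sign classes tacitly uses positivity of all weights. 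The caveat you flag is genuine but afflicts the paper's proof equally: if all $A_{i,\infty}$ vanish, the constraint \eqref{mass_cycle_1} is vacuous and \eqref{c_e2_1} is false, so the hypothesis ``for some $A_{i,\infty}\geq 0$'' must be read as in the application, where $\mathbf{a}_{\infty}$ is strictly positive by Lemma \ref{c_equilibrium}.
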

	\begin{proof}
					Note that the left hand side of \eqref{c_e2_1} can be bounded below by
					\begin{equation}\label{c_e3}
					\sum_{i=1}^{N}\bigg((1+\mu_i)^{\alpha_i} - (1+\mu_{i+1})^{\alpha_{i+1}}\bigg)^2 \geq \frac{1}{N^N} \sum_{i\not=j}\biggl((1+\mu_i)^{\alpha_i} - (1+\mu_j)^{\alpha_j}\biggr)^2,
					\end{equation}
thanks to the elementary inequality $a^2 + b^2 \geq \frac{1}{2}(a+b)^2$. Thanks to the mass conservation \eqref{mass_cycle_1}, there exists $\emptyset \not= I_0\subset \{1,2,\ldots, N\}$ such that $\mu_i \geq 0$ for $i\in I_0$ and $\mu_j <0$ for $j\in I_0^C$, where also the complement $I_0^C$ is not empty. Thus, for $i\in I_0$ and $j\in I_0^C$, we have
\begin{equation*}
	(1+\mu_i)^{\alpha_i} - (1+\mu_j)^{\alpha_j} \geq (1+\mu_i) - (1+\mu_j) = \mu_i - \mu_j \geq 0.
\end{equation*}
Then
\begin{equation*}
\Bigl((1+\mu_i)^{\alpha_i} - (1+\mu_j)^{\alpha_j}\Bigr)^2 \geq (\mu_i - \mu_j)^2 \geq \mu_i^2 + \mu_j^2
	\end{equation*}
	for all $i\in I_0$ and $j\in I_0^C$. We can thus continue to estimate \eqref{c_e3} 
\begin{equation}\label{c_e4}
\begin{aligned}
\sum_{i=1}^{N}\Bigl((1+\mu_i)^{\alpha_i} - (1-\mu_{i+1})^{\alpha_{i+1}}\Bigr)^2
&\geq \frac{1}{N^N}\sum_{i\not=j}\Bigl((1+\mu_i)^{\alpha_i} - (1+\mu_j)^{\alpha_j}\Bigr)^2\\
&\geq \frac{1}{N^N}\sum_{i\in I_0, j\in I_0^C}(\mu_i^2 + \mu_j^2)
\geq \frac{1}{N^{N}}\sum_{i=1}^{N}\mu_i^2.
\end{aligned}
\end{equation}
This ends the proof of Lemma \ref{finite_cycle} and Case 4.1 
of Thm. \ref{c_lem:eede}.
\end{proof}

\begin{remark} The proof of the above Lemma \ref{finite_cycle} used the conservation law \eqref{mass_cycle_1} in a rather weak way. Indeed, it only 
uses the fact that not all $\mu_i$ have the same sign.
We believe that
the ideas used in the proof of Lemma \ref{finite_cycle}  can be generalised
 to many complex balanced systems, 
once the structure of the conservation laws is explicitly given.
\end{remark}

\item[Case 4.2] We now consider cases where there exists (at least) one $i_0$ such that $\overline{A_{i_0}^2} \leq \varepsilon^2$. For sufficiently small $\varepsilon$, such cases can only occur away from the unique strictly positive equilibrium state. Hence, we expect that the entropy dissipation (and its lower bound on the left-hand-side of  \eqref{remain}) should be bounded below by a positive constant, which expresses this distance from the equilibrium state.	 

At first, however, we note that the right hand side of \eqref{remain} (which is an upper bound of the relative entropy towards the equilibrium) is bounded above by a constant due to the following estimate:
\begin{equation}\label{boundrhs}
	\hspace{5mm} K_3\sum_{i=1}^{N}\left(\sqrt{\overline{A_i^2}} - A_{i,\infty}\right)^2 \leq 2K_3\sum_{i=1}^{N}(\overline{a}_i + a_{i,\infty}) \leq 2K_3\biggl(M\max\{\alpha_i\} + \sum_{i=1}^{N}a_{i,\infty}\biggr).
\end{equation}
Hence, in order to prove \eqref{remain}, it remains  to show that the left hand side of \eqref{remain} is bounded below by a positive constant. To show that, we 
define
\begin{equation}\label{eta}
	\eta := \min_{i}\frac{M-\varepsilon^2/\alpha_i}{2(N-1)},
	\end{equation}
and distinguish two further subcases representing states with large and small spatial variations: 
\medskip 
			
\begin{description}[topsep=5pt, leftmargin=5mm]
\item[Subcase 1] There exists   ${i_*} \in \{1,2,\ldots, N\}$ such that $\|\delta_{i_*}\|^2 \geq \eta>0$. Then, we can estimate directly thanks to \eqref{boundrhs} and Poincar\'{e}-Wirtinger's inequality:
\begin{equation}\label{boundD1}
					\text{LHS of (\ref{remain})} \geq  \sum_{i=1}^{N}C_P\|\delta_{i}\|^2 \geq C_P\eta \geq K_3\sum_{i=1}^{N}\left(\sqrt{\overline{A_i^2}} - A_{i,\infty}\right)^2 ,
				\end{equation}
	by choosing
				\begin{equation}\label{K3_2}
					K_3 \leq \frac{C_P\eta}{2\left(M\max\{\alpha_i\} + \sum_{i=1}^{N}c_{i,\infty}\right)}.
				\end{equation}
\item[Subcase 2] For all  $i=1,2,\ldots, N$,  we have $\|\delta_i\|^2 \leq \eta$. In this case concerning states away from equilibrium yet with small spatial variation, we expect that the reaction terms represent in large parts the distance from the equilibrium.
This can be quantified from firstly recalling the conservation law 
$
\sum_{i=1}^{N}\frac{1}{\alpha_i}\overline{A_i^2} = M$, 
and from observing the estimate	
\begin{equation*}
\sum_{i\not=i_0}\frac{1}{\alpha_i}\overline{A_i^2} = M - \frac{1}{\alpha_{i_0}}\overline{A_{i_0}^2} \geq M - \varepsilon^2/\alpha_{i_0}.
\end{equation*}
	Thus, there exists $j_0 \not= i_0$ such that 
				\begin{equation*}
				\overline{A_{j_0}^2} \geq \alpha_{j_0}\frac{M - \varepsilon^2/\alpha_{i_0}}{N - 1} \geq \frac{M - \varepsilon^2/\alpha_{i_0}}{N - 1}
				\end{equation*}
				since $\alpha_{j_0} \geq 1$. 				Hence, by using \eqref{eta},
				\begin{equation*}
				\overline{A}_{j_0}^2 = \overline{A_{j_0}^2} - \|\delta_{j_0}\|^2 \geq \frac{M - \varepsilon^2/\alpha_{i_0}}{N - 1} - \eta \geq  \frac{M - \varepsilon^2/\alpha_{i_0}}{2(N - 1)}.
				\end{equation*}
Now, with $\gamma$ defined in \eqref{gamma}, by using the triangle inequality and Young's inequality, we get
\begin{align}
\text{LHS of \eqref{remain}} &\geq  \sum_{i=1}^{N}\left(\frac{\overline{A}_i^{\alpha_i}}{A_{i,\infty}^{\alpha_i}} - \frac{\overline{A}_{i+1}^{\alpha_{i+1}}}{A_{i+1,\infty}^{\alpha_{i+1}}}\right)^2 \geq \frac{1}{|j_0 - i_0| + 1}\left(\frac{\overline{A}_{i_0}^{\alpha_{i_0}}}{A_{i_0,\infty}^{\alpha_{i_0}}} - \frac{\overline{A}_{j_0}^{\alpha_{j_0}}}{A_{j_0,\infty}^{\alpha_{j_0}}}\right)^2\nonumber\\
&\geq \frac{1}{N}\left(\frac{1}{2}\frac{\overline{A}_{j_0}^{2\alpha_{j_0}}}{A_{j_0,\infty}^{2\alpha_{j_0}}} - \frac{\overline{A}_{i_0}^{2\alpha_{i_0}}}{A_{i_0,\infty}^{2\alpha_{i_0}}}\right)\nonumber
\geq \frac{1}{N}\left(\frac{1}{2^{\alpha_{j_0}+1}A_{j_0,\infty}^{2\alpha_{j_0}}}\left(\frac{M-\varepsilon^2/\alpha_{i_0}}{N-1}\right)^{\alpha_{j_0}} - \frac{\varepsilon^{2\alpha_{i_0}}}{A_{i_0,\infty}^{2\alpha_{i_0}}}\right)\\
&\geq \frac{\gamma}{N}
\geq K_3\sum_{i=1}^{N}\left(\sqrt{\overline{A_i^2}} - A_{i,\infty}\right)^2, \label{boundD2}
\end{align}
whenever
\begin{equation}\label{K3_3}
	K_3 \leq \frac{\gamma}{2N\left(M\max\{\alpha_i\} + \sum_{i=1}^{N}c_{i,\infty}\right)}.
\end{equation}
\end{description}
\end{description}

From \eqref{K3_1}, \eqref{K3_2} and \eqref{K3_3}, we obtain \eqref{remain} with an explicit $K_3$.
Then by combining \eqref{c_e1}, \eqref{c_e5} and \eqref{remain},
we can conclude the desired entropy entropy-dissipation inequality. 

The second part of the proof of Thm. \ref{c_lem:eede}
concerns the exponential convergence of the renormalised solution towards the strictly positive complex balance equilibrium. 

Firstly, if the solution of the system \eqref{c_cycle_0} is regular enough in order to 
 rigorously satisfy the 
weak entropy entropy-dissipation law \eqref{integrability}, i.e., for a.a. $t\geq s > 0$,
\begin{equation*}
	\mathcal{E}(\mathbf a|\mathbf a_{\infty})(t) + \int_{s}^{t}\mathcal{D}(\mathbf a)(r)dr \leq \mathcal E(\mathbf a|\mathbf a_{\infty})(s),
\end{equation*}
then exponential convergence to equilibrium in relative entropy $\mathcal{E}(\mathbf a|\mathbf a_{\infty})$
follows from a suitable Gronwall argument, see e.g. \cite{Wil} or more specifically \cite{FL15}.
Moreover, the Csisz\'{a}r-Kullback-Pinsker inequality yields exponential convergence of solutions to the complex balance equilibrium in $L^1$.
\medskip 

However, for renormalised solution as introduced in \cite{Fis15}, it is unclear if the weak entropy entropy-dissipation law  \eqref{integrability} holds due to the lacking integrability of the nonlinear reaction terms in the entropy-dissipation.
We will resolve this issue by proving exponential convergence to equilibrium with a uniform rate for the family of  
approximating sequences which were used to construct renormalised solutions in \cite{Fis15}. 

Since the below estimates actually hold for general systems and not just for the cyclic reaction  \eqref{c_cycle_0},
we revert to the notations for general complex balanced systems \eqref{e0}, i.e.  
\begin{equation*}
	\partial_t\cc - \mathbb D\Delta \cc = \mathbf R(\cc), \quad \nabla \cc \cdot \nu = 0, \quad \cc(x,0) = \cc_0(x).
\end{equation*}
The corresponding approximating systems used by \cite{Fis15} write as 
\begin{equation}\label{approx}
	\partial_t\cc^{\varepsilon} - \mathbb D\Delta \cc^{\varepsilon} = \frac{\mathbf R(\cc^{\varepsilon})}{1+\varepsilon|\mathbf R(\cc^{\varepsilon})|}, \quad \nabla \cc^{\varepsilon} \cdot \nu = 0, \quad \cc^{\varepsilon}(x,0) = \cc^{\varepsilon}_0(x),
\end{equation}
where $\cc^{\varepsilon}_0$ is a suitable approximation of $\cc_0$ as $\varepsilon\rightarrow 0$.
It was proven in \cite{Fis15} that \eqref{approx} has a unique weak solution $\cc^{\varepsilon}$,
 with $\cc^{\varepsilon} \in L^{\infty}([0,+\infty[; L\log L(\Omega))$ and, as $\varepsilon \rightarrow 0$,
\begin{equation}\label{a.e.}
	\cc^{\varepsilon} \rightarrow \cc \quad \text{ almost everywhere on } \Omega\times [0,+\infty),
\end{equation}
where $\cc$ is the renormalised solution of \eqref{e0}. Note that \eqref{e0} and \eqref{approx} share the same complex balance equilibrium $\ww$. Moreover, we use the same relative entropy for \eqref{approx} as
for the limiting system: 
\begin{equation*}
	\mathcal E(\cc^{\varepsilon}|\ww) = \sum_{i=1}^{N}\int_{\Omega}\left(c_i^{\varepsilon}\log{\frac{c_i^{\varepsilon}}{c_{i,\infty}}} - c_i^{\varepsilon} + c_{i,\infty}\right)dx.
\end{equation*}
By using Proposition \ref{cale} and the notation $G(\cc)$ in \eqref{322bis}, we compute the entropy-dissipation for \eqref{approx} as
\begin{equation*}
	\mathcal{D}^{\varepsilon}(\cc^{\varepsilon}) = -\frac{d}{dt}\mathcal{E}(\cc^{\varepsilon}|\ww) = \sum_{i=1}^{N}d_i\int_{\Omega}\frac{|\nabla c^{\varepsilon}_i|^2}{c^{\varepsilon}_i}dx + \int_{\Omega}\frac{G(\cc^{\varepsilon})}{1+\varepsilon|\mathbf R(\cc^{\varepsilon})|}dx.
\end{equation*}
We can now follow the first part of the proof of Thm. \ref{c_lem:eede} to show that
\begin{equation}\label{desired}
	\mathcal{D}^{\varepsilon}(\cc^{\varepsilon}) \geq \eta\, \mathcal{E}(\cc^{\varepsilon}|\ww)
\end{equation}
where {\it $\eta>0$ is independent of $\varepsilon$}. Indeed, in the first step, we write
\begin{equation*}
	\mathcal{E}(\cc^{\varepsilon}|\ww) = \mathcal E(\cc^{\varepsilon}|\overline{\cc^{\varepsilon}}) + \mathcal E(\overline{\cc^{\varepsilon}}|\ww),
\end{equation*}
in which the first term is controlled by $\mathcal D^{\varepsilon}(\cc^{\varepsilon})$ thanks to the Logarithmic Sobolev inequality
\begin{equation*}
	\frac{1}{2}\mathcal D^{\varepsilon}(\cc^{\varepsilon}) \geq \lambda_1\mathcal E(\cc^{\varepsilon}|\overline{\cc^{\varepsilon}}),
\end{equation*}
where $\lambda_1$ is independent of $\varepsilon$. In the second step, we directly estimate $\mathcal{D}^{\varepsilon}(\cc^{\varepsilon})$ below by
\begin{equation*}
	\mathcal D^{\varepsilon}(\cc^{\varepsilon}) \geq 4\min_{j}\{d_j\}\sum_{i=1}^{N}\|\nabla C^{\varepsilon}_i\|^2 + \int_{\Omega}\frac{H(\mathbf C^{\varepsilon})}{1+\varepsilon|\mathbf R (\cc^{\varepsilon})|}dx,
\end{equation*}
where
\begin{equation*}
	H(\mathbf C^{\varepsilon}) = \sum_{r=1}^{|\mathcal R|}k_r\ww^{y_r}\left(\frac{(\CC^{\varepsilon})^{y_r}}{\WW^{y_r}} - \frac{(\CC^{\varepsilon})^{y_r'}}{\WW^{y_r'}}\right)^2.
\end{equation*}
Following Step 3 and  using the domain decomposition $\Omega=S\cup S^{c}$, we observe first that on $S = \{x\in\Omega: |\delta_i(x)| \leq L \text{ for all } i = 1,\ldots, N \}$. 
{Secondly, we note that for cyclic reactions satisfying the mass conservation law \eqref{MassCons}, we estimate 
 $\overline{C_i^{\varepsilon}}^2\le \overline{c_i^{\varepsilon}}\leq \alpha_i M$ thanks to Jensen's inequality. 
Therefore, we have 
$|C_i^{\varepsilon}|\le |\delta_i|+ \overline{C_i^{\varepsilon}}$ and thus
$|\mathbf R (\cc^{\varepsilon})| \le C(L,M)$ for some constant $C(L,M)$ depending
on $L$ and $M$ (and the rates appearing in $\mathbf{R}$).  
Note that for general systems, we can equally apply the upper bounds \eqref{KK} instead of \eqref{MassCons} and replace $M$ by $\tilde K$.}
 
Hence, we obtain for $0\le \varepsilon \le 1$ the uniform lower bound  
$$
\frac{1}{1+\varepsilon|\mathbf R (\cc^{\varepsilon})|}\ge \frac{1}{1+C(L,M)}.
$$
As a consequence, with this simple modification, we can show as in the proof of Thm. \ref{c_lem:eede} that 
\begin{equation*}
	\mathcal D^{\varepsilon}(\cc^{\varepsilon}) \geq K_1\left(\sum_{i=1}^{N}\|\nabla C^{\varepsilon}_i\|^2 + \sum_{i=1}^{N}\left(\frac{\overline{\CC^{\varepsilon}}^{y_r}}{\WW^{y_r}} - \frac{\overline{\CC^{\varepsilon}}^{y_r'}}{\WW^{y_r'}}\right)^2\right)
\end{equation*}
where {\it $K_1>0$ is independent of $\varepsilon$}. Note that the statement involving  $S^{c}$ holds without change. 

Moreover, we can now repeat all the arguments in Step 4 to finally prove estimate \eqref{desired}. Hence, by applying the Gronwall lemma, 
we obtain convergence to equilibrium in relative entropy, i.e.
\begin{equation}\label{desired_1}
	\mathcal E(\cc^{\varepsilon}(t)|\ww) \leq e^{-\eta t}\mathcal{E}(\cc^{\varepsilon}_0|\ww),
\end{equation}
for all $t>0$ and with a rate $\eta$ which is independent of $\varepsilon$. Thanks to the almost everywhere convergence \eqref{a.e.} and the convexity of $\mathcal E$, we can pass to the limit $\varepsilon \rightarrow 0$ in \eqref{desired_1}, and end up with
\begin{equation*}
	\mathcal{E}(\cc(t)|\ww) \leq \liminf_{\varepsilon\rightarrow 0}\mathcal{E}(\cc^{\varepsilon}(t)|\ww) \leq e^{-\eta t}\mathcal{E}(\cc_0|\ww),
\end{equation*}
which, combined with the Csisz\'{a}r-Kullback-Pinsker inequality, allows us to finish the proof of Thm. \ref{c_lem:eede}.\hfill  $\blacksquare$

\section{Complex balanced systems with boundary equilibria}\label{sec:boundary}

In the previous section, we have shown that for a complex balanced reaction-diffusion system without boundary equilibria,  all solution trajectories converge exponentially fast 
to the unique strictly positive complex balance equilibrium thanks to a functional entropy entropy-dissipation inequality.

It was also pointed out in the introduction that if a system has boundary equilibria, then such an entropy entropy-dissipation estimate 
cannot hold with the same generality. 
Indeed, if a trajectory approaches a boundary equilibrium, then the entropy dissipation tends to zero while the relative 
entropy functional towards the positive complex balance equilibrium remains strictly positive, see \eqref{compute}. The question of the large time behaviour for general complex balanced reaction-diffusion systems possessing boundary equilibria is
therefore an open problem. 
\medskip

In this section, we give  some partial answers. 
We first consider 
 the two by two model \eqref{2x2_0},
 and then
  the three by three model \eqref{3x3_0}.
 In the two by two model, the boundedness of solutions away from the boundary equilibrium $(0, M)$ follows from a comparison-principle argument. The instability of the boundary equilibrium $(0,0,M)$ for the three by three model, however, turns out be tricky and makes system \eqref{3x3_0} a more interesting example. 
 
In fact, we do not obtain the instability directly. To establish Thm. \ref{ExpConverge}, we instead prove that if a trajectory should approach the boundary equilibrium, then the rate of this convergence process cannot be faster than $1/(1+t)$. That (too) slow convergence is sufficient to apply an entropy entropy-dissipation-like estimate which yields instead convergence to the unique strictly positive complex balance equilibrium with an algebraic rate. 

In a second step, thanks to this algebraically fast convergence, it follows after some positive time $T>0$ that such trajectories remain outside of a neighbourhood of the boundary equilibrium. 
Hence, another entropy entropy-dissipation estimate can be proven, which implies exponential convergence to the unique strictly positive complex balance equilibrium.


\begin{remark}\label{generalisation}
It is easy to see that the result of Theorem \ref{ExpConverge} can be
 extended
  to the following class of complex balanced reaction networks: 
\begin{equation} \label{gen}
\begin{tikzpicture}[baseline=(current  bounding  box.center)]
	\node (a) {$\mathcal A$} node (b) at (2,0) {$\alpha \,\mathcal B+\mathcal C$} node (c) at (0,-1.5) {$(\alpha + 1)\mathcal B$};
	\draw[arrows=->] ([xshift =0.5mm]a.east) -- node [above] {\scalebox{.8}[.8]{$k_1$}} ([xshift =-0.5mm]b.west);
	\draw[arrows=->] ([yshift=0.5mm]c.north) -- node [left] {\scalebox{.8}[.8]{$k_3$}} ([yshift=-0.5mm]a.south);
	\draw[arrows=->] ([xshift =-0.5mm,yshift=-0.5mm]b.south) -- node [right] {\scalebox{.8}[.8]{$k_2$}} ([yshift =0.5mm]c.east);
\end{tikzpicture}
\end{equation}
with $\alpha \in \mathbb{N}_{>0}$. All these systems  possess the boundary equilibrium $(0,0,M)$. 


\end{remark}
\medskip


\textcolor{black}{It is important to notice a substantial difference between the ODE and the PDE settings of \eqref{2x2_0} and \eqref{3x3_0}. In the ODE setting it can be easily verified that any trajectory starting away from the boundary equilibria (even when starting at the boundary $\partial\mathbb R^2_+$ or $\partial\mathbb R^3_+$, respectively), will enter the interior of the positive quadrant and thus eventually converge to the unique strictly positive complex balance equilibrium. The PDE setting, however, allows the existence of a non-trivial class of initial data, for which solutions converge to the boundary equilibrium. We present such an example for system \eqref{3x3_0} in Rmk. \ref{r44}.}
\medskip

We now start the 
\medskip

\begin{proof}[Proof of Prop.\ref{2x2_theorem}]
We show first the existence of a unique positive equilibrium and a boundary equilibrium for \eqref{2x2_0}. Indeed, the system \eqref{2x2_0} satisfies the mass conservation
\begin{equation*}
	\int_{\Omega}(a(x,t) + b(x,t))dx = M:= \int_{\Omega}(a_0(x) + b_0(x))dx \qquad \forall\, t>0.
\end{equation*}
Thus, an equilibrium of \eqref{2x2_0} solves
\begin{equation*}
	\begin{cases}
		a_{\infty}^2 = a_{\infty}b_{\infty},\\
		a_{\infty} + b_{\infty} = M,
	\end{cases}
\end{equation*}
so that there exists a unique strictly positive equilibrium $(a_{\infty}, b_{\infty}) = (M/2, M/2)$,
 and there also exists a boundary equilibrium $(a^*, b^*) = (0,M)$.

Next, we show the uniform propagation of lower and upper bounds for solutions to \eqref{2x2_0}. While comparison principles cannot be expected to hold for general systems of parabolic equations, the specific structure of the $2\times2$ system \eqref{2x2_0} 
allows to prove the following {\it{a priori}} bounds and global existence of classical solutions:

\begin{lemma}[Uniform propagation of positive lower and upper bounds and existence of global classical solutions]\label{bound2x2}\hfill\\
Under the assumption of Prop. \ref{2x2_theorem},
there exists a unique global classical solution $(a, b)$ to \eqref{2x2_0} satisfying the same bounds
	\begin{equation} \label{bbo}
	\varepsilon^2 \leq a(t,x), b(t,x) \leq \Lambda \qquad \text{ for all }\quad x\in\Omega, \quad t>0.
	\end{equation}
\end{lemma}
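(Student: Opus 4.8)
The plan is to obtain a unique local-in-time solution by standard parabolic theory and then to upgrade it to a global classical solution by producing the uniform-in-time $L^\infty$ bounds \eqref{bbo} through an invariant-region (comparison) argument. For the local theory, since the reaction terms $-a^2+ab$ and $a^2-ab$ are smooth (polynomial) and the initial data lie in $L^\infty(\Omega)$, a contraction mapping argument on the mild formulation (or parabolic semigroup theory) yields a unique solution $(a,b)$ on a maximal interval $[0,T_{\max})$, which is classical ($C^2$ in $x$, $C^1$ in $t$, indeed smooth) for $t>0$ by parabolic smoothing, and which obeys the continuation criterion: if $T_{\max}<+\infty$, then $\|a(t)\|_{L^\infty(\Omega)}+\|b(t)\|_{L^\infty(\Omega)}\to+\infty$ as $t\uparrow T_{\max}$. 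Nonnegativity is preserved because the system is quasi-positive, since $(-a^2+ab)|_{a=0}=0$ and $(a^2-ab)|_{b=0}=a^2\ge 0$.

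The core of the argument is then to show that the rectangle $[\varepsilon^2,\Lambda]^2$ is invariant, which immediately gives \eqref{bbo}. The decisive structural observation is that the reaction terms factor as $-a^2+ab=a(b-a)$ and $a^2-ab=-a(b-a)$, so that on each face of the rectangle the reaction field points inward \emph{provided the other component still lies in} $[\varepsilon^2,\Lambda]$. Concretely, on the face $\{a=\varepsilon^2\}$ the $a$-reaction equals $\varepsilon^2(b-\varepsilon^2)\ge 0$ whenever $b\ge\varepsilon^2$; on $\{a=\Lambda\}$ it equals $\Lambda(b-\Lambda)\le 0$ whenever $b\le\Lambda$; on $\{b=\varepsilon^2\}$ the $b$-reaction equals $a(a-\varepsilon^2)\ge 0$ whenever $a\ge\varepsilon^2$; and on $\{b=\Lambda\}$ it equals $a(a-\Lambda)\le 0$ whenever $a\le\Lambda$. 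Thus the bound on each variable is protected by its own reaction exactly as long as the \emph{other} variable respects its bounds.

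To make this a rigorous maximum-principle statement I would introduce the perturbed barriers $\underline{u}_\delta(t)=\varepsilon^2-\delta e^{Kt}$ and $\overline{u}_\delta(t)=\Lambda+\delta e^{Kt}$ for small $\delta>0$ and large $K>0$, and let $t^\ast$ be the first time at which any of the four perturbed bounds is touched by $a$ or $b$. Strictly before $t^\ast$ all four (perturbed) bounds hold, so at a touching point one may insert the already-available control on the complementary component into the factorized reaction term. For instance, at a lower touching $a(x_0,t_0)=\underline u_\delta(t_0)$ one has $\Delta a\ge 0$ at the interior spatial minimum, hence $a_t\ge a(b-a)\ge -a\,a\ge -\Lambda^2$, whereas $\underline u_\delta'(t_0)=-\delta K e^{Kt_0}$; choosing $K$ so that $\delta K>\Lambda^2$ gives $a_t>\underline u_\delta'(t_0)$, contradicting $a_t\le\underline u_\delta'(t_0)$ at a first contact. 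The other three faces are handled symmetrically, with $K$ chosen to absorb the reaction's Lipschitz bound on the compact range $[\,\varepsilon^2-\delta,\Lambda+\delta\,]^2$. The homogeneous Neumann condition is dealt with by Hopf's lemma, since a boundary touching point would force a nonzero inward normal derivative, contradicting $\nabla a\cdot\nu=0$. Letting $\delta\to 0$ yields $\varepsilon^2\le a,b\le\Lambda$ on $[0,T_{\max})$. Finally, this uniform bound together with the continuation criterion forces $T_{\max}=+\infty$, producing the global classical solution, with uniqueness inherited from the local theory.

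The main obstacle is precisely the coupling between the two equations: neither bound is accessible by a scalar comparison in isolation, because controlling $a$ requires controlling $b$ and conversely. This circularity is resolved by the invariant-rectangle/first-exit mechanism, which exploits that at the first instant any bound could fail, the complementary component is still confined to $[\varepsilon^2,\Lambda]$, so the reaction field genuinely points inward. The secondary technical points are the correct treatment of the Neumann boundary via Hopf's lemma and the use of exponential barriers to convert the non-strict inward-pointing inequalities into the strict ones needed for the contradiction.
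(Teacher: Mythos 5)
Your strategy---local existence plus a continuation criterion, then the invariant rectangle $[\varepsilon^2,\Lambda]^2$ established by a first-contact comparison argument---is in substance the same as the paper's primary argument, which sketches exactly this first-violation-time contradiction with the parabolic extremum principle (following \cite{kirane}), and which additionally offers an alternative proof via the $L^p$-hierarchy $\frac{d}{dt}\int_\Omega\bigl(\frac{1}{pa^p}+\frac{1}{pb^p}\bigr)dx\le 0$, letting $p\to\infty$. However, there is a genuine gap in your quantitative implementation of the first-contact step. At a lower contact of $a$ you estimate $a_t\ge a(b-a)\ge -\Lambda^2$ and then require $\delta K>\Lambda^2$, i.e. $K\ge \Lambda^2/\delta$. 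With this coupling the barrier degenerates: for every fixed $t>0$ one has $\delta e^{K(\delta)t}\ge \delta e^{\Lambda^2 t/\delta}\to+\infty$ as $\delta\to 0$, so the bounds you have actually proven, $\varepsilon^2-\delta e^{K(\delta)t}\le a,b\le \Lambda+\delta e^{K(\delta)t}$, carry no information in the limit, and the final step ``letting $\delta\to0$ yields $\varepsilon^2\le a,b\le\Lambda$'' fails. The crude bound $a(b-a)\ge-\Lambda^2$ discards exactly the inward-pointing structure that your first-exit set-up was designed to exploit.

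The repair is the one your own parenthetical remark about the Lipschitz constant points to, and it costs one line. At the first contact time $t_0$ all four perturbed bounds still hold non-strictly everywhere; hence at a lower contact point of $a$ you know $b(x_0,t_0)\ge \varepsilon^2-\delta e^{Kt_0}=a(x_0,t_0)$, so $a(b-a)\ge 0$ there and $a_t\ge d_a\Delta a\ge 0>-\delta K e^{Kt_0}=\underline{u}_\delta'(t_0)$, a contradiction for \emph{any} fixed $K>0$. Symmetrically: at $a=\Lambda+\delta e^{Kt_0}$ one has $b\le a$, so $a(b-a)\le 0$; at $b=\varepsilon^2-\delta e^{Kt_0}$ one has $a\ge b$, so $a(a-b)\ge 0$; at $b=\Lambda+\delta e^{Kt_0}$ one has $a\le b$, so $a(a-b)\le 0$. (Equivalently, the reaction at the contact point differs from its value on the exact face, where it points inward, by at most $L\delta e^{Kt_0}$ with $L$ the Lipschitz constant of the reaction on the enlarged rectangle, so any fixed $K>L$ suffices.) With $K$ independent of $\delta$, the limit $\delta\to 0$ now gives \eqref{bbo}, and the remainder of your argument (Hopf's lemma at Neumann boundary contact points, continuation criterion, uniqueness) goes through as written.
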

\begin{proof}
	The propagation of the claimed strictly positive lower and upper bounds to solutions of system \eqref{bound2x2} follows, for instance, from comparison principle arguments. Following e.g. \cite{kirane}, one can argue that if one of the (possibly regularised) solutions $a$ or $b$ should violate these positive lower or upper bounds at some position $x_0\in\Omega$ for a first time at $t_0>0$, then the right hand side of the corresponding equations (for the concentration minus the lower/upper bound) 
	has the proper sign to yield a contradiction with the parabolic minimum/maximum principle for such extremal points. 
	
	Alternatively, the following argument (which works for the same structural reasons as the comparison principle argument) equally allows to prove the propagation of the positive lower and upper bounds as limits of a hierarchy 
	of corresponding $L^p$-bounds. 
We formally compute
	(the formal computation can easily be made rigorous via approximating systems with strictly positive solutions e.g. by adding vanishing positive source terms) 
	the quantity leading to the lower bound (the upper bound can be proven in a similar way):
	\begin{multline*}
	\frac{d}{dt}\int_{\Omega}\left(\frac{1}{pa^p} + \frac{1}{pb^p}\right)dx\\
	=-\int_{\Omega}d_a(p+1)a^{-p-2}|\nabla a|^2dx - \int_{\Omega}d_b(p+1)b^{-p-2}|\nabla b|^2dx
	+ \int_{\Omega}a(a^{-p-1} - b^{-p-1})(a-b)dx\leq 0.
	\end{multline*}
	Thus,
	\begin{equation*}	
	\sup_{t>0}\left\{\int_{\Omega}\Bigl(\frac{1}{a^p} + \frac{1}{b^p}\Bigr)dx\right\}  \leq \int_{\Omega}\Bigl(\frac{1}{a_0^p} + \frac{1}{b_0^p}\Bigr)dx \ 
	\xrightarrow{p\to\infty}\
	\sup_{t\ge 0}\left\{\left\|\frac{1}{a(t)}\right\|_{L^{\infty}_x}+ \left\|\frac{1}{b(t)}\right\|_{L^{\infty}_x}\right\} \leq \left\|\frac{1}{a_0}\right\|_{L^{\infty}_x} + \left\|\frac{1}{b_0}\right\|_{L^{\infty}_x}.
	\end{equation*}
	Finally, due to the uniform-in-time $L^{\infty}$-bounds \eqref{bbo}, the existence of a unique global classical solution 
	to \eqref{2x2_0} (which satisfies the positive lower and upper bounds point-wise in $x$) follows from classical arguments.
\end{proof}	

Lemma \ref{bound2x2} implies that initial data $(a_0, b_0)$, which are a.e. bounded away from the boundary equilibrium $(0,M)$, yield solutions $(a(\cdot,t), b(\cdot,t))$, which are uniformly-in-time bounded away from the same boundary equilibrium.  

	Then, direct computations show that
	\begin{equation}\label{l1}
	\begin{aligned}
	\frac{1}{2}\frac{d}{dt}\left(\|a(\cdot,t) - a_{\infty}\|^2 + \|b(\cdot,t) - b_{\infty}\|^2\right)
	&= - d_a\int_{\Omega}|\nabla a|^2dx - d_b\int_{\Omega}|\nabla b|^2dx - \int_{\Omega}a(a - b)^2dx\\
	&\leq - d_a\int_{\Omega}|\nabla a|^2dx - d_b\int_{\Omega}|\nabla b|^2dx - \varepsilon^2\int_{\Omega}(a - b)^2dx ,
	\end{aligned}
	\end{equation}
	thanks to Lemma \ref{bound2x2}. Due to the mass conservation $\overline{a}(t) + \overline{b}(t) = M$ and previously established entropy entropy-dissipation estimates for detailed balance reaction-diffusion systems without boundary equilibria (see e.g. \cite{DeFe06,DeFe08,FT15}), we obtain, for all $t>0$,  
	\begin{equation*}
	d_a\int_{\Omega}|\nabla a|^2dx + d_b\int_{\Omega}|\nabla b|^2dx + \varepsilon^2\int_{\Omega}(a - b)^2dx
	\geq \frac{\lambda_{\varepsilon}}{2}\left(\|a(\cdot,t) - a_{\infty}\|^2 + \|b(\cdot,t) - b_{\infty}\|^2\right),
	\end{equation*}
	where $\lambda_{\varepsilon}$ depends only on $\Omega$, $M$, $d_a, d_b$ (and $\varepsilon$) and moreover $\lambda_{\varepsilon} = O(\varepsilon^2)$. By inserting the above estimate into \eqref{l1}, exponential convergence to equilibrium follows from a standard Gronwall inequality.
\end{proof}

\begin{proof}[Proof of Cor. \ref{2x2cor}]
First, we remark that classical existence results imply for each continuous  nonnegative initial data a unique global classical nonnegative (in the sense that both components are nonnegative) solution $(a,b)$ to the system \eqref{2x2_0}. Moreover, these solutions satisfy an $L^{\infty}$ upper bound
as in the proof of Prop. \ref{2x2_theorem}. 
Thus, we get the estimate
$$
\begin{cases}
a_t - d_a  \Delta a \ge -\|a\|_{L^{\infty}} a, \\
a_0(x)\ge 0 \quad \text{with} \quad \int_{\Omega} a_0(x)\,dx >0.
\end{cases}
$$
Hence, the strong minimum principle for the heat-equation with Neumann boundary conditions (see e.g. \cite{Pao}) 
implies for classical solutions that for any $\tau>0$ there exists an $\varepsilon_a(\tau)>0$ such that $a(x,\tau)\ge \varepsilon_a(\tau)$. Otherwise, any point 
where $a(x,\tau)=0$ (with $x \in \Omega$) would be a minimum and yield a contradiction with the minimum principle except if $a_0(x)\equiv 0$ on $\Omega$, which we have excluded of the set of admissible initial data (note 
that for homogeneous Dirichlet data, we would indeed only have $a(x,\tau)> 0$ on $\Omega$). 
The same argument holds for $b(x,t)$ provided that 
$\int_{\Omega} b_0(x)\,dx >0$. In the special case when
$\int_{\Omega} b_0(x)\,dx =0$, we observe 
 that 
$$
\frac{d}{dt} \int_{\Omega} b(x,t)\,dx \bigg|_{t=0}  \geq \int_{\Omega} a_0^2(x)\,dx   
>0.
$$
Hence, for any (sufficiently small) $\tau/2>0$, we see that $\int_{\Omega} b(x,\tau/2)\,dx>0$ 
and, via an analog minimum principle argument, that $b(x,\tau)\ge \varepsilon_b(\tau)$ for some $\varepsilon_b(\tau)>0$.
As a consequence, the statement of Corollary \ref{2x2cor} follows from applying Proposition \ref{2x2_theorem} 
for $t\ge\tau>0$.

For the second statement of Cor. \ref{2x2cor}, we observe that $a_0\equiv0$ and  \eqref{2x2_0} imply that the solution $a(x,t)\equiv0$ for all times, and thus 
$$
\begin{cases}
b_t - d_b\Delta b = 0, &x\in\Omega, \quad t>0,\\
b_0(x)\ge 0, &x\in\Omega, \quad \int_{\Omega} b_0(x)\,dx=M.
\end{cases}
$$
Therefore, by standard results for the heat equation and for all $t>0$, we have 
\begin{equation*}
\|b(t) - M\|^2 \leq e^{-d_b\lambda_P t}\|b_0 - M\|^2,
\end{equation*}
where $\lambda_P$ is the best constant in Poincar\'{e}-Wirtinger's inequality $\|\nabla f\|^2 \geq \lambda_P\|f - \overline{f}\|^2$ on the domain $\Omega$.

\end{proof}

\begin{remark}[Higher regularity]
By using the $L^{\infty}$ bounds on $a$ and $b$, we can actually obtain exponential convergence to equilibrium in any $L^p$-norm with $1\leq p <+\infty$ via interpolation. Moreover, following e.g. \cite{DeFe08}, one can verify that any Sobolev norm of classical solutions to systems \eqref{2x2_0} (which matches the assumed regularity of the boundary $\partial\Omega$) will grow at most polynomially-in-time, so that exponential convergence  to equilibrium in a (slightly lower) Sobolev norm follows again via interpolation with \eqref{ExpoDecay2x2}.  
\end{remark}	

\begin{remark}[Generalised $2\times2$ systems]
The above arguments can be generalised to obtain exponential convergence to equilibrium for the following class of systems
	\begin{equation*}
	\begin{cases}
	a_t - d_a\Delta a = -\varphi(a)(r(a) - r(b)), &x\in\Omega, \quad t>0,\\
	b_t - d_b\Delta b = \varphi(a)(r(a) - r(b)), &x\in\Omega, \quad t>0,
	\end{cases}
	\end{equation*}
	subject to homogeneous Neumann boundary condition and initial data satisfying the bounds \eqref{bbo}, where $\varphi: [0,+\infty) \rightarrow [0,+\infty)$ is continuously increasing and $r: [0,+\infty) \rightarrow [0,+\infty)$ and its derivative $r'$ are continuously increasing.
%
	
	As example, we can consider reactions of the form 
	\begin{equation*}
	(n+m)\mathcal{A} \leftrightharpoons n\mathcal{A} + m\mathcal{B}
	\end{equation*}
	with $n\geq 0$ and $m\geq 1$, which are described by the mass action law systems 
	\begin{equation*}
	\begin{cases}
	a_t - d_a\Delta a = -m\,a^n(a^m - b^m), &x\in\Omega, \quad t>0,\\
	b_t - d_b\Delta b = m\,a^n(a^m - b^m), &x\in\Omega, \quad t>0,
	\end{cases}
	\end{equation*}
	corresponding to the cases $\varphi(z) = mz^n$ and $r(z) = z^m$.
\end{remark}
\bigskip

We now consider the
 complex-balanced chemical reaction network defined by \eqref{3x3_0}.
In strict contrast to the $2\times2$ system \eqref{2x2_0}, 
general reaction-diffusion systems like \eqref{3x3_0} do not allow to show uniform-in-time propagation of upper and positive lower {\it{a priori}} $L^{\infty}$-estimates.

The existence of global classical solutions to \eqref{3x3_0}
in space dimensions $N\leq 5$ follows from bootstrap arguments (in the spirit of e.g. \cite{Rot}) thanks to the specific structure of \eqref{3x3_0}, and since the nonlinear reaction terms are at most quadratic.
In higher space dimensions $N\ge6$, existence of global classical solutions can be shown under a ``closeness'' condition on the diffusion coefficients by following e.g. \cite{CDF14}.   
 The (rather standard) bootstrap argument along with the complete proof of the following Lemma stating existence of global, classical solutions is given in the Appendix. 
\begin{lemma}[Global classical solutions]\label{ClassicalSolution}\hfill\\
Let $\Omega$ be a bounded smooth ($C^2$) domain of $\mathbb{R}^N$, and  $d_a,d_b,d_c>0$, $k_1, k_2, k_3 >0$.
Assume nonnegative initial data $a_0, b_0, c_0\in L^{\infty}(\Omega)$,
and denote  
\begin{equation*}
\delta:= \max\{d_a,d_b,d_c\} - \min\{d_a,d_b,d_c\}.
\end{equation*}
Consider either $1\leq N\leq 5$ or $N\geq 6$ and $\delta>0$  sufficiently small (depending on $N$ as specified in the Appendix). 

Then, there exists a unique, nonnegative, global classical solution $(a,b,c)$ to system \eqref{3x3_0}, which satisfies the following $L^{\infty}$-bound for all $T>0$:
\begin{equation*}
\|a\|_{L^{\infty}(\Omega_T)} + \|b\|_{L^{\infty}(\Omega_T)} + \|c\|_{L^{\infty}(\Omega_T)} \leq C(T),
\end{equation*}
where $C(T)$ grows at most polynomially with respect to $T$.

Moreover, $a, b, c$ satisfy the conservation of mass 
\begin{equation}\label{mass3x3}
\int_{\Omega}(2\,a(x,t) + b(x,t) + c(x,t))\,dx = M:= \int_{\Omega}(2\,a_0(x) + b_0(x) + c_0(x))\, dx \quad \text{ for all } \quad t>0.
\end{equation}
\end{lemma}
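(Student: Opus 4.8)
The plan is to combine a standard local existence theory with \emph{a priori} estimates coming from the mass-control structure of \eqref{3x3_0}, and then to bootstrap those estimates up to an $L^\infty$-bound that grows only polynomially in time. First I would invoke quasilinear parabolic theory: since the reaction terms $R_a = -k_1 a + k_3 b^2$, $R_b = k_1 a + k_2 bc - 2k_3 b^2$ and $R_c = k_1 a - k_2 bc$ are polynomial, hence locally Lipschitz, the system admits a unique maximal classical solution $(a,b,c)$ on $[0,T_{\max})$, together with the blow-up alternative that $T_{\max}<\infty$ forces $\|a(t)\|_{L^\infty}+\|b(t)\|_{L^\infty}+\|c(t)\|_{L^\infty}\to\infty$ as $t\to T_{\max}$; thus global existence reduces to ruling out finite-time blow-up of the $L^\infty$-norm. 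Nonnegativity follows from quasi-positivity, since on each face the relevant reaction term is nonnegative (for instance $R_a|_{a=0}=k_3 b^2\ge 0$, $R_b|_{b=0}=k_1 a\ge 0$, $R_c|_{c=0}=k_1 a\ge 0$), so nonnegative data yield nonnegative $(a,b,c)$.

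Next I would extract the fundamental bounds from the conservation structure. The weighted sum $z:=2a+b+c$ satisfies $\partial_t z=\Delta(2d_a a + d_b b + d_c c)$, which integrates (using the Neumann condition) to the mass conservation \eqref{mass3x3}; in particular $(a,b,c)$ is bounded in $L^\infty((0,\infty);L^1(\Omega))$ uniformly in time. Writing $2d_a a + d_b b + d_c c = D\,z$ with $D:=(2d_a a + d_b b + d_c c)/(2a+b+c)\in[\min_i d_i,\max_i d_i]$, the function $z$ solves the scalar equation $\partial_t z-\Delta(D z)=0$ with a bounded, strictly positive coefficient. The duality method then yields an $L^2((0,T)\times\Omega)$ estimate for $z$, and the improved duality estimates of \cite{CDF14} upgrade this to an $L^{2+\varepsilon}((0,T)\times\Omega)$ bound, where the gain $\varepsilon>0$ increases as the spread $\delta=\max_i d_i-\min_i d_i$ of the diffusion coefficients decreases. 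Consequently each of $a,b,c$ lies in $L^{2+\varepsilon}((0,T)\times\Omega)$.

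The core of the argument is then the bootstrap, in the spirit of \cite{Rot}, from space-time integrability to an $L^\infty$-bound. Once $(a,b,c)\in L^p((0,T)\times\Omega)$ is known, the quadratic reaction terms satisfy $R_i\in L^{p/2}((0,T)\times\Omega)$, and parabolic maximal regularity together with the anisotropic parabolic Sobolev embedding upgrades the solution to $L^q$ with $\tfrac1q=\tfrac2p-\tfrac{2}{N+2}$. This iteration is self-improving, i.e. $q>p$, precisely when $p>(N+2)/2$, in which case finitely many steps reach an exponent large enough that $R_i\in L^p$ with $p>(N+2)/2$ and classical parabolic regularity delivers $(a,b,c)\in L^\infty$. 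The whole scheme therefore hinges on whether the starting exponent $2+\varepsilon$ exceeds the scaling-critical threshold $(N+2)/2$: this holds unconditionally for $1\le N\le 5$, whereas for $N\ge 6$ it requires the gain $\varepsilon$ provided by the improved duality estimate to be sufficiently large, which is exactly the role of the closeness assumption on the diffusion coefficients.

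Finally I would track the time-dependence of the constants. Since the conserved mass $M$ is independent of $T$, and each duality and bootstrap step introduces only polynomial-in-$T$ factors (coming from integrating the heat kernel and from maximal-regularity constants over $(0,T)$), the resulting estimate takes the claimed form $\|a\|_{L^\infty(\Omega_T)}+\|b\|_{L^\infty(\Omega_T)}+\|c\|_{L^\infty(\Omega_T)}\le C(T)$ with $C(T)$ growing at most polynomially. Combined with the blow-up alternative, this forces $T_{\max}=+\infty$ and yields the global classical solution, while \eqref{mass3x3} records the conservation law already used. I expect the main obstacle to be the bootstrap/duality step, and more precisely guaranteeing a starting integrability exponent above the scaling-critical value $(N+2)/2$; this is the sole place where the dimension restriction and the closeness condition are genuinely needed, all the remaining steps being standard.
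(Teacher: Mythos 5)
Your overall scaffolding (local existence with the blow-up alternative, quasi-positivity for nonnegativity, mass conservation, and duality to get $z=2a+b+c\in L^2(\Omega_T)$) matches the paper, and your treatment of $N\geq 6$ under a closeness condition is also the paper's. The genuine gap is in the bootstrap for $3\leq N\leq 5$, which is the heart of the lemma. You treat the nonlinearity symmetrically: from $(a,b,c)\in L^p(\Omega_T)$ you place all reaction terms in $L^{p/2}$ and invoke heat regularity, which, as you correctly note, is self-improving only for $p>(N+2)/2$. But your claimed starting point fails: the improved duality estimates of \cite{CDF14} give $L^{2+\varepsilon}$ with $\varepsilon$ small unless the diffusion coefficients are close to each other, so the inequality $2+\varepsilon>(N+2)/2$ is \emph{not} unconditional for $1\leq N\leq 5$ --- for $N=3,4,5$ it would require $\varepsilon>1/2$, $\varepsilon>1$, $\varepsilon>3/2$ respectively, i.e. exactly the kind of closeness assumption on $d_a,d_b,d_c$ that the lemma is designed to avoid in these dimensions. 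With your symmetric iteration, the unconditionally covered cases reduce to $N\leq 2$.

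The paper closes the cases $3\leq N\leq 5$ by exploiting the triangular structure of \eqref{3x3_0} rather than its mere quadratic growth. The key point is that the inequality \eqref{ff3}, $c_t-d_c\Delta c\leq k_1 a$, has a source which is \emph{linear} in $a$, so $a\in L^2$ alone already yields $c\in L^{s}$ for all $s<\tfrac{2(N+2)}{N-2}$ by Lemma \ref{HeatRegularity} (no halving of the exponent); then $a+bc$ gains integrability, hence $b$ does via \eqref{ff2}, hence $b^2$ does, hence $a$ does via \eqref{ff1}. Iterating this cycle $a\to c\to b\to a$ strictly increases the exponents and reaches $L^\infty(\Omega_T)$ in finitely many steps for $N=3,4,5$ starting from bare $L^2$ (this is the content of the tables in the Appendix), whereas the same cycle stalls after one step for $N\geq 6$ --- which is precisely where the paper falls back on \cite{CDF14} and the closeness condition. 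To repair your argument you must replace the symmetric step ``$R_i\in L^{p/2}$'' by this component-wise bookkeeping; the polynomial-in-$T$ growth of the constants then follows as you say, since the duality estimate of \cite{Pie10} and each application of Lemma \ref{HeatRegularity} carry at most polynomial growth in $T$.
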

		

Considering system \eqref{3x3_0}, we observe that any equilibrium $(a_{\infty}, b_{\infty}, c_{\infty})$ solves 
the stationary state conditions
\begin{equation*}
\begin{cases}
-k_1a_{\infty} + k_3b_{\infty}^2 = 0,\\
	k_1a_{\infty} + k_2b_{\infty}c_{\infty} - 2k_3b_{\infty}^2 = 0,\\
	k_1a_{\infty} - k_2b_{\infty}c_{\infty} = 0,
\end{cases}
\end{equation*}
and satisfies the mass conservation law
\begin{equation*}
	2a_{\infty} + b_{\infty} + c_{\infty} = M.
\end{equation*}
Thus, it follows that system \eqref{3x3_0} features two equilibria, first the unique positive complex-balance equilibrium
\begin{equation}\label{defeq} 
(a_{\infty}, b_{\infty}, c_{\infty}) \qquad \text{where}\quad a_{\infty} = \frac{k_3}{k_1}b_{\infty}^2, \ b_{\infty} = \frac{-k_1+\sqrt{k_1(k_1+2k_2M)}}{2k_2} \  \text{ and } \ c_{\infty} = \frac{k_3}{k_2}b_{\infty},
\end{equation}
and secondly the boundary equilibrium
\begin{equation*}
(a^*, b^*, c^*) = (0,0,M).
\end{equation*}
	
For notational convenience, we shall write
 $\mathbf{c} = (a,b,c)$ and $\mathbf{c}_{\infty} = (a_{\infty}, b_{\infty}, c_{\infty})$.	
At some places we shall also use (coherently with the first sections of this paper)  $c_1 := a$, $c_2 := b$ and $c_3 := c$ (and similarly for $c_{i,\infty}$ with $i=1,2,3$). 

The relative entropy writes as (cf. \eqref{FreeEnergy_PDE})
\begin{equation}\label{efin}
	\mathcal{E}(\mathbf{c}|\mathbf{c}_{\infty}) = \sum_{i=1}^{3}\int_{\Omega}\left(c_i\log{\frac{c_i}{c_{i,\infty}}} - c_i + c_{i,\infty}\right)dx,
\end{equation}
and its entropy dissipation is given by (cf. Prop. \ref{cale})
\begin{equation}\label{newd}
	\begin{aligned}
		\mathcal{D}(\mathbf{c}) 
			&= d_a\int_{\Omega}\frac{|\nabla a|^2}{a}dx + d_b\int_{\Omega}\frac{|\nabla b|^2}{b}dx + d_c\int_{\Omega}\frac{|\nabla c|^2}{c}dx\\
			&\quad + \int_{\Omega}\left[k_1a_{\infty}\Psi\left(\frac{a}{a_{\infty}}; \frac{bc}{b_{\infty}c_{\infty}}\right) + k_2b_{\infty}c_{\infty}\Psi\left(\frac{bc}{b_{\infty}c_{\infty}}; \frac{b^2}{b_{\infty}^2}\right)
			               + k_3b_{\infty}^2\Psi\left(\frac{b^2}{b_{\infty}^2}; \frac{a}{a_{\infty}}\right)\right]dx .
	\end{aligned}
\end{equation}
	
The following uniform-in-time {\it{a priori}} estimates for the average $\overline{c_i}(t)= \int_{\Omega} c_i(x,t)\,dx$ are a  natural consequence of the mass conservation law  \eqref{mass3x3} and the nonnegativity of the solutions:
\begin{equation}\label{mass}
2 \overline{a}(t)+ \overline{b}(t) + \overline{c}(t)  = M \quad {\hbox{for all}} \quad  t \ge 0.
\end{equation}

It is easy to see that if $\mathbf{c}_k = (a_k,b_k,c_k)$ is a space homogeneous sequence satisfying \eqref{mass} and converging to $(0,0,M)$, then
\begin{equation}\label{compute}
	\lim_{k\rightarrow \infty}\mathcal{D}(\mathbf c_k) = 0 \quad \text{ and } \quad \lim_{k\rightarrow \infty} \mathcal{E}(\mathbf c_k|\ww) = a_{\infty} + b_{\infty} + \Psi(M;c_{\infty}) >0,
\end{equation}
that means there does not exist any $\lambda >0$ such that $\mathcal{D}(\cc) \geq \lambda \mathcal{E}(\cc|\ww)$ for all $\cc$ satisfying the mass conservation \eqref{mass}. Hence, the convergence of solutions to \eqref{3x3_0} towards the strictly positive complex balance equilibrium is not clear. In fact, similarly to the Cor.\ref{2x2cor}, the following example shows that there is a non-trivial class of initial data, for which solutions to \eqref{3x3_0} will converge to the boundary equilibrium. 
\begin{remark}[Convergence towards boundary equilibrium]\label{r44}\hfill\\
Consider initial data $a_0 = b_0 \equiv 0$ and $\overline{c_0} = M$, yet with $c_0\not\equiv M$ (so that the initial data $(a_0,b_0,c_0)$ is not the boundary equilibrium $(0,0,M)$). 

By arguing similarly to the second part of Cor. \ref{2x2cor}, we obtain that 
 $a(t) = b(t) \equiv 0$ (for all $t \ge 0$), while $c:= c(t)$ is given by the solution of the homogeneous heat equation (with homogeneous Neumann boundary conditions), and thus converges exponentially fast to $M$ as
\begin{equation*}
\|c(t) - M\|^2 \leq e^{-d_c\lambda_P t}\|c_0 - M\|^2,
\end{equation*}
where $\lambda_P$ is the constant in Poincar\'{e}-Wirtinger's inequality $\|\nabla f\|^2 \geq \lambda_P\|f - \overline{f}\|^2$ on the domain $\Omega$.
	
\end{remark}

We now start the
\begin{proof}[Proof of Theorem \ref{ExpConverge}]
	We consider in this proof a classical solution $\cc := \cc(x,t)$ of system
	\eqref{3x3_0}.
	\par 
We first use the assumption $\|\frac{1}{b_0}\|_{L^{\infty}(\Omega)}<\infty$
to show that 
\begin{equation}\label{hh}
\inf_{x\in\Omega}b(x,t) \geq h(t):= \frac{1}{\bigl\|\frac{1}{b_0}\bigr\|_{L^{\infty}} + 2k_3t}, \qquad \text{for all}\quad t \ge 0.
\end{equation}
Indeed thanks to 
 the continuity of classical solutions, there exists a $t^*>0$ such that 
$\|b(t, \cdot)^{-1}\|_{L^{\infty}} \leq 2\,\|b_0^{-1}\|_{L^{\infty}}$ for all $t\in [0,t^*]$. 
Thus, for all $t\in [0,t^*]$, we can compute 
\begin{equation*}
\partial_t\left(\frac 1b\right) - d_b\,\Delta \left(\frac 1b\right) = -\frac{k_1a}{2b^2} - \frac{k_2c}{b} +2k_3 - 2d_b\frac{|\nabla b|^2}{b^3} \leq 2k_3.
\end{equation*}
Thus, using the maximum principle, we obtain
\begin{equation}\label{bi}
\left\|\frac{1}{b(t)}\right\|_{L^{\infty}(\Omega)} \leq \left\|\frac{1}{b_0}\right\|_{L^{\infty}(\Omega)} + 2k_3 t,\qquad \text{for all}\quad t\in [0,t^*],
\end{equation}
which implies the desired estimate for the time-interval $[0,t^*]$. Furthermore, it is easy to see that this argument can be iterated in time, and eventually yields the bound \eqref{bi}
for all 
$t>0$.
\medskip
	
We see therefore that if $b$ decays to zero, then it decays at most with a rate of the form $(1+t)^{-1}$. We will see that this lower bound,
 in combination with the entropy entropy-dissipation structure, is sufficient to show
  that it is impossible for the solution to converge to the boundary equilibrium.
	

As previously,  $\mathcal{E}$ defined by \eqref{efin} satisfies 
\begin{equation}\label{newadd}
\mathcal{E}(\cc|\ww) = \mathcal{E}(\cc|\overline{\cc}) 
+ \mathcal{E}(\overline{\cc}|\ww),
\end{equation}
 and via the Logarithmic Sobolev Inequality, we get 
\begin{equation}\label{add}
	\frac{1}{2}\mathcal{D}(\cc) \geq \lambda_1\mathcal{E}(\cc|\overline{\cc}).
\end{equation}
We denote  
$
	F(\cc) = \lambda_1\sum_{i=1}^{3}\Psi(c_i; c_{i,\infty}).
$
By using estimate \eqref{bi} (for all $t \ge 0$), the Logarithmic Sobolev Inequality
 and the additivity property \eqref{newadd}, we see that (with $h(t)$ being defined in
 \eqref{hh}) 
		\begin{equation*}
			\begin{aligned}
				\frac{1}2 \mathcal{D}(\mathbf{c}) &\geq \lambda_1(\mathcal{E}(\cc|\ww) - \mathcal{E}(\overline{\cc}|\ww))\\
				&\qquad  + \frac 12 \int_{\Omega}\left[k_1a_{\infty}\Psi\left(\frac{a}{a_{\infty}}; \frac{bc}{b_{\infty}c_{\infty}}\right) + k_2c_{\infty}h(t)\Psi\left(\frac{c}{c_{\infty}}; \frac{b}{b_{\infty}}\right)
							               + k_3b_{\infty}^2\Psi\left(\frac{b^2}{b_{\infty}^2}; \frac{a}{a_{\infty}}\right)\right]dx\\
				&\geq \alpha h(t) \bigg\{ \int_{\Omega}\underbrace{\left[F(\cc) + \Psi\left(\frac{a}{a_{\infty}}; \frac{bc}{b_{\infty}c_{\infty}}\right) + \Psi\left(\frac{c}{c_{\infty}}; \frac{b}{b_{\infty}}\right) 
								+ \Psi\left(\frac{b^2}{b_{\infty}^2}; \frac{a}{a_{\infty}}\right)\right]}_{:=\Phi(\cc)}dx - F(\overline{\cc}) \bigg\} \\
				&\geq \alpha \,h(t)\, \left(\int_{\Omega}\widehat{\Phi}(\cc)\, dx - F(\overline{\cc})\right)\\
				&\geq \alpha\, h(t)\, (\widehat{\Phi}(\overline{\cc}) - F(\overline{\cc})),
\end{aligned}		
\end{equation*}
where 
$\alpha = \left\|\frac{1}{b_0}\right\|_{L^{\infty}}\min\left\{ \lambda_1;\; \frac 12 k_1a_{\infty};\; \frac 12 k_2c_{\infty}\left\|\frac{1}{b_0}\right\|_{L^{\infty}}^{-1}; \; \frac 12 k_3b_{\infty}^2 \right\}$,
$\Phi(\cc) = F({\cc}) +Q(\cc) $ and 
$$Q(\cc):= \Psi\left(\frac{a}{a_{\infty}}; \frac{bc}{b_{\infty}c_{\infty}}\right) + \Psi\left(\frac{c}{c_{\infty}}; \frac{b}{b_{\infty}}\right)+ \Psi\left(\frac{b^2}{b_{\infty}^2}; \frac{a}{a_{\infty}}\right) .
$$
We also recall that $\widehat{\Phi}$ denotes the convexification of $\Phi$. Define 
$$\mathfrak{C}_M = \{\boldsymbol{\xi} = (a,b,c)\in \mathbb{R}^3_{\geq 0}: 2a + b + c = M \quad \text{ and } \quad \mathcal{E}(\boldsymbol{\xi}|\ww) \leq \mathcal{E}(\cc_0|\ww)\}.$$
the set of all concentrations satisfying the mass conservation \eqref{mass3x3} and suitably bounded relative entropy. Note that 
$$
	\boldsymbol{\xi} \in \mathfrak{C}_M \quad \text{ and } \quad Q(\boldsymbol{\xi})=   0 \qquad \Longleftrightarrow \qquad \boldsymbol{\xi}\equiv \ww ,
$$
and
$$
	\widehat{\Phi}(\boldsymbol{\xi}) - F(\boldsymbol{\xi}) \geq \widehat{F}(\boldsymbol{\xi}) + \widehat{Q}(\boldsymbol{\xi}) - F(\boldsymbol{\xi}) \geq \widehat{Q}(\boldsymbol{\xi}) \geq 0.
$$
Hence, 
\begin{equation*}
	\inf_{\boldsymbol{\xi}\in \mathfrak{C}_M}\frac{\widehat{\Phi}(\boldsymbol{\xi}) - F(\boldsymbol{\xi})}{\mathcal{E}(\boldsymbol{\xi}|\ww)} > 0 \quad \text{ if } \quad \liminf_{\mathfrak{C}_M\ni \boldsymbol{\xi} \rightarrow \ww}\frac{\widehat{\Phi}(\boldsymbol{\xi}) - F(\boldsymbol{\xi})}{\mathcal{E}(\boldsymbol{\xi}|\ww)} > 0.
\end{equation*}
On the other hand, we can estimate by using Lemma \ref{lem:conv} and Taylor expansion around $\ww$ (see Proposition \ref{ODE_Theorem} or \cite[Proposition 3.3]{MiHaMa14})
\begin{equation*}
	\liminf_{\mathfrak{C}_M\ni \boldsymbol{\xi} \rightarrow \ww}\frac{\widehat{\Phi}(\boldsymbol{\xi}) - F(\boldsymbol{\xi})}{\mathcal{E}(\boldsymbol{\xi}|\ww)} \ge \liminf_{\mathfrak{C}_M\ni \boldsymbol{\xi} \rightarrow \ww}\frac{{\widehat{Q}}(\boldsymbol{\xi})}{\mathcal{E}(\boldsymbol{\xi}|\ww)} = 
	\liminf_{\mathfrak{C}_M\ni \boldsymbol{\xi} \rightarrow \ww}\frac{Q(\boldsymbol{\xi})}{\mathcal{E}(\boldsymbol{\xi}|\ww)} > 0.
\end{equation*}
Finally, we see that
\begin{equation*}
	\inf_{\boldsymbol{\xi}\in \mathfrak{C}_M}\frac{\widehat{\Phi}(\boldsymbol{\xi}) - F(\boldsymbol{\xi})}{\mathcal{E}(\boldsymbol{\xi}|\ww)} \geq \beta >0,
\end{equation*}
and consequently
\begin{equation}\label{nonExplicit}
			\frac 12\mathcal{D}(\cc) \geq \alpha\beta h(t)\mathcal{E}(\overline{\cc}|\ww),
		\end{equation}
		which, combining with \eqref{add}, implies 
		\begin{equation}\label{gammaExplicit}
			\mathcal{D}(\cc) \geq \gamma h(t)\mathcal{E}(\cc|\ww) \quad \text{ for all } \quad t>0,
		\end{equation}
		where $\gamma = \min\left\{\alpha\beta; \lambda_1\left\|\frac{1}{b_0}\right\|_{L^{\infty}(\Omega)} \right\}$.
Then, thanks to Gronwall's lemma, we see that for all $t \ge 0$,
		\begin{equation*}
			\mathcal{E}(\cc(t)|\ww) \leq \mathcal{E}(\cc_0|\ww){\left(\left\|\frac{1}{b_0}\right\|_{L^{\infty}(\Omega)} + 2k_3t\right)^{-\gamma/2k_3}} 
		\end{equation*}
and the relative entropy with respect to the complex balance equilibrium decays therefore
 to zero with the algebraic rate $\gamma/2k_3$.
	Moreover, by using 
a Csisz\'{a}r-Kullback-Pinsker inequality (see e.g. Lemma \ref{CKPinequality}), we get the
estimate
\begin{equation*}
		\|a(t) - a_{\infty}\|_{L^1(\Omega)}^2 + \|b(t) - b_{\infty}\|_{L^1(\Omega)}^2 + \|c(t) - c_{\infty}\|_{L^1(\Omega)}^2 \leq \frac{\mathcal{E}(\cc_0|\ww)}{C_{CKP}}{\left(\left\|\frac{1}{b_0}\right\|_{L^{\infty}(\Omega)} + 2k_3t\right)^{-\gamma/2k_3}}.
	\end{equation*}
As a consequence, there exists for any (sufficiently small) $ \varepsilon>0$ a time $T_{\varepsilon}>0$ such that solutions are bounded away from the boundary equilibrium in the sense of $L^1$, i.e. $\|a(t)\|_{L^1(\Omega)} \geq \varepsilon^2$, $\|b(t)\|_{L^1(\Omega)} \geq \varepsilon^2$ and $\|c(t)\|_{L^1(\Omega)} \geq \varepsilon^2$ for all $t\geq T_{\varepsilon}$. These $L^1$-bounds away from the boundary equilibrium allow to apply a specialised entropy entropy-dissipation estimate proven in Lemma \ref{3x3_eede} below. Finally, via another Gronwall arguments, we obtain
	\begin{equation*}
		\mathcal{E}(\cc(t)|\ww) \leq e^{-\lambda_{\varepsilon} t}\mathcal{E}(\cc_0|\ww) \qquad \text{ for all }\quad t\geq T_{\varepsilon}
	\end{equation*}
	which consequently, together with another use of a Csisz\'{a}r-Kullback-Pinsker inequality, implies 
	Theorem \ref{ExpConverge}.		
\end{proof}	

\begin{remark}\label{rmk:explicit}
We remark that since we prove estimate \eqref{nonExplicit} via the convexification technique, we get 
a non-explicit constant $\gamma$ in \eqref{gammaExplicit}. In fact, $\gamma$  can be explicitly estimated by using the constructive method in Subsection \ref{sec:constructive}, at the price of a much longer proof.
\end{remark}

	\begin{lemma}[Entropy entropy-dissipation estimate]\label{3x3_eede}
Consider a positive initial mass $M>0$. 

Then, for any nonnegative, measurable functions $a, b, c:\Omega \rightarrow \mathbb{R}_+$ satisfying the mass conservation law
\begin{equation*}
\int_{\Omega}(2a(x) + b(x) + c(x))\, dx = M,
\end{equation*}
and the lower bounds $\|a\|_{L^1(\Omega)} \geq \varepsilon^2$, $\|b\|_{L^1(\Omega)} \geq \varepsilon^2$ and $\|c\|_{L^1(\Omega)} \geq \varepsilon^2$,
 the functional inequality
\begin{equation}\label{e_estimate}
\mathcal D(\mathbf{c}) \geq \lambda_{\varepsilon}\,\mathcal{E}(\mathbf{c}|\mathbf{c}_{\infty})
\end{equation}
holds, for some explicit constant 
 $\lambda_{\varepsilon}>0$, 
  which depends only on the initial mass $M$, the domain $\Omega$, the diffusion coefficients $d_a, d_b, d_c$, the reaction rates $k_1$, $k_2$, $k_3$, and on $\varepsilon$. Here $\mathcal{E}(\mathbf c|\mathbf c_{\infty})$ and $\mathcal D(\mathbf c)$ are defined by \eqref{efin} and \eqref{newd}, while
  $\ww$ is defined by \eqref{defeq}. Moreover, we have
		\begin{equation*}
		\lambda_{\varepsilon} = O(\varepsilon^6) \quad \text{ as } \quad \varepsilon\rightarrow 0.
		\end{equation*}
	\end{lemma}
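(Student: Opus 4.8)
The plan is to prove \eqref{e_estimate} by running the constructive four--step scheme of Subsection~\ref{sec:constructive}, specialised to the three complexes $\mathcal A$, $\mathcal B+\mathcal C$, $2\mathcal B$ of \eqref{3x3_0} and to the single conservation law $2\overline a+\overline b+\overline c=M$; the $L^1$--lower bounds $\overline{c_i}\ge\varepsilon^2$ enter only to keep the admissible spatial averages uniformly away from the boundary equilibrium $(0,0,M)$. Writing $c_1:=a,\ c_2:=b,\ c_3:=c$ and $C_i:=\sqrt{c_i}$, $C_{i,\infty}:=\sqrt{c_{i,\infty}}$, I would first use the additivity $\mathcal E(\cc|\ww)=\mathcal E(\cc|\overline{\cc})+\mathcal E(\overline{\cc}|\ww)$ and absorb $\mathcal E(\cc|\overline{\cc})$ into half of $\mathcal D(\cc)$ via the Logarithmic Sobolev Inequality, exactly as in \eqref{add}. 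This reduces the claim to bounding the finite--dimensional quantity $\mathcal E(\overline{\cc}|\ww)$ by the remaining half of $\mathcal D(\cc)$.

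For the reduction I would follow Steps~2--3: the remaining $\tfrac12\mathcal D(\cc)$ is bounded below, through $\Psi(x;y)\ge(\sqrt x-\sqrt y)^2$ (cf. \eqref{elest}), by $2\min_j\{d_j\}\sum_i\|\nabla C_i\|^2$ plus the three quadratic reaction terms (with prefactors $k_1a_\infty$, $k_2b_\infty c_\infty$, $k_3b_\infty^2$), while $\overline{c_i}\le M$ yields $\mathcal E(\overline{\cc}|\ww)\le K_2\sum_i(\sqrt{\overline{c_i}}-C_{i,\infty})^2$ with explicit $K_2$. The domain decomposition $\Omega=S\cup S^c$ of \cite[Lemma~2.6]{FT15} then replaces the pointwise reaction terms by reaction terms of the averages, so that, setting
\[
  P_1:=\frac{\overline{C_1}}{C_{1,\infty}},\qquad P_2:=\frac{\overline{C_2}\,\overline{C_3}}{C_{2,\infty}C_{3,\infty}},\qquad P_3:=\frac{\overline{C_2}^{\,2}}{C_{2,\infty}^{2}},
\]
it remains to establish the finite--dimensional inequality
\begin{equation*}
  \sum_{i=1}^{3}\|\nabla C_i\|^2+(P_1-P_2)^2+(P_2-P_3)^2+(P_3-P_1)^2\ \ge\ K_3\sum_{i=1}^{3}\bigl(\sqrt{\overline{c_i}}-C_{i,\infty}\bigr)^2 .
\end{equation*}
The three bracketed quantities are the values of the complexes $\mathcal A$, $\mathcal B+\mathcal C$, $2\mathcal B$ evaluated at the averaged square roots, and they appear in the cyclic difference pattern inherited from the reaction cycle $\mathcal A\to\mathcal B+\mathcal C\to 2\mathcal B\to\mathcal A$.

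The crux is this last inequality (Step~4). I would first dispose of large spatial fluctuations: if $\|\delta_i\|^2:=\|C_i-\overline{C_i}\|^2$ exceeds a fixed threshold for some $i$, then Poincar\'e's inequality makes $\sum_i\|\nabla C_i\|^2$ bounded below by a positive constant while the right--hand side is bounded above, and we conclude. Otherwise $\overline{C_i}^{\,2}=\overline{c_i}-\|\delta_i\|^2\gtrsim\varepsilon^2$, so every averaged square root is $\gtrsim\varepsilon$; with the ansatz $\overline{c_i}=c_{i,\infty}(1+\mu_i)^2$, $\mu_i\in[-1,\infty)$, the conservation law reads $2a_\infty\mu_1(\mu_1+2)+b_\infty\mu_2(\mu_2+2)+c_\infty\mu_3(\mu_3+2)=0$, so the $\mu_i$ cannot all share the same sign, and (up to remainders controlled by $\|\delta_i\|$) one has $P_1\approx1+\mu_1$, $P_2\approx(1+\mu_2)(1+\mu_3)$, $P_3\approx(1+\mu_2)^2$. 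A Taylor expansion around $\ww$, using the positive definiteness of the scalar $\mathbb Q\,\mathrm{diag}(\ww)\,\mathbb Q^{\top}=4a_\infty+b_\infty+c_\infty>0$ exactly as in the ODE Lemma~\ref{intermediate}, settles a neighbourhood of $\ww$; for averages bounded away from $\ww$, the fact that not all $\mu_i$ agree in sign forces at least one cyclic difference to be bounded below.

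The main obstacle, and the source of the explicit decay rate, is the quantitative lower bound on the cyclic reaction terms when the averages saturate $\overline{c_i}\approx\varepsilon^2$ near the boundary equilibrium. Because the complex $2\mathcal B$ enters quadratically through $P_3=\overline{C_2}^{\,2}/C_{2,\infty}^{2}$, the worst admissible configuration (two averages at their lower bound, the remaining mass carried by $\overline c\approx M$) makes the reaction dissipation degenerate polynomially in $\varepsilon$; chaining the $\varepsilon$--dependent thresholds — the fluctuation cut--off, the Taylor remainder bounds (scaling like $\varepsilon^{-1}$), and the conservation--constrained minimum of the cyclic differences — then yields an explicit $K_3$, and hence an explicit $\lambda_\varepsilon=\min\{\lambda_1,\ \tfrac{K_1K_3}{2K_2}\}$ with $\lambda_\varepsilon=O(\varepsilon^6)$ as $\varepsilon\to0$. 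Finally, a Csisz\'ar--Kullback--Pinsker inequality (Lemma~\ref{CKPinequality}) converts \eqref{e_estimate} into the claimed $L^1$ statement.
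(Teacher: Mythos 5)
Your Steps 1--3 coincide with the paper's own proof: the same use of additivity of the relative entropy and the Logarithmic Sobolev Inequality, the same quadratic bounds via $\Psi(x;y)\ge(\sqrt x-\sqrt y)^2$, the same passage to reaction terms of averaged square roots via the domain decomposition of \cite{FT15}, and your $P_1,P_2,P_3$ are exactly the paper's $\overline A/A_\infty$, $\overline B\,\overline C/(B_\infty C_\infty)$, $\overline B^{\,2}/B_\infty^2$. Your accounting of the rate, $O(\varepsilon^4)$ from absorbing the Taylor remainders (which scale like $\varepsilon^{-1}$) into the gradient terms and $O(\varepsilon^2)$ from the finite-dimensional inequality, multiplying to $\lambda_\varepsilon=O(\varepsilon^6)$, is also the paper's.

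The gap is in Step 4, which you yourself identify as the crux. First, the claim that ``for averages bounded away from $\ww$, the fact that not all $\mu_i$ agree in sign forces at least one cyclic difference to be bounded below'' is false as stated: along the boundary-equilibrium direction $\mu_A,\mu_B\to-1$, with $\mu_C>0$ determined by the conservation law $2a_\infty\mu_A(\mu_A+2)+b_\infty\mu_B(\mu_B+2)+c_\infty\mu_C(\mu_C+2)=0$, the signs disagree and yet all three differences $(1+\mu_A)-(1+\mu_B)^2$, $(1+\mu_B)^2-(1+\mu_B)(1+\mu_C)$, $(1+\mu_B)(1+\mu_C)-(1+\mu_A)$ tend to zero. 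This is precisely the degeneracy created by the boundary equilibrium $(0,0,M)$ that makes the lemma delicate. The sign structure works only in combination with the quantitative bound $(1+\mu_B)^2\ge\varepsilon^2/B_\infty^2$ (coming from $\overline b\ge\varepsilon^2$), which multiplies the middle difference $(\mu_B-\mu_C)^2$; the paper then runs an explicit three-case analysis on the sign patterns of $(\mu_A,\mu_B,\mu_C)$ permitted by the conservation law and obtains the uniform constant $K_5=\tfrac14\min\{\varepsilon^2/B_\infty^2,\,1\}$, with no near/far dichotomy at all. Second, treating a neighbourhood of $\ww$ by a Taylor expansion ``exactly as in Lemma \ref{intermediate}'' imports a liminf-type, non-constructive step; this is exactly the device the lemma is designed to avoid (cf. Remark \ref{rmk:explicit}), since it would not produce the explicit $\lambda_\varepsilon$ that the statement asserts. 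Both defects are repairable, and the repair is the paper's uniform sign case analysis; but as written, the key finite-dimensional inequality is asserted rather than proven, and one of the two mechanisms offered for it breaks down precisely at the configuration the lemma is about.
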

	
\begin{proof} 
We will apply the  method described in Subsection \ref{sec:constructive}.
We already know that \eqref{add} (that is, Step 1), holds.
\medskip 

In Step 2, we bound the entropy dissipation $\mathcal{D}(\cc)$ below and the relative entropy $\mathcal{E}(\cc|\ww)$ above by quadratic terms for the square root concentrations $\CC := (A,B,C)$:
\begin{equation*}
		\mathcal{D}(\cc)\geq  K_0\biggl(\|\nabla A\|^2 + \|\nabla B\|^2 + \|\nabla C\|^2
		 + \biggl\|\frac{{A}}{A_{\infty}} - \frac{{B}^2}{B_{\infty}^2}\biggr\|^2 + \biggl\|\frac{{B}^2}{B_{\infty}^2} - \frac{{B}{C}}{B_{\infty}C_{\infty}}\biggr\|^2 + \biggl\|\frac{{B}{C}}{B_{\infty}C_{\infty}} - \frac{{A}}{A_{\infty}}\biggr\|^2\biggr), 
\end{equation*}
		and
		\begin{equation*}
		\mathcal{E}(\overline{\cc}|\ww) \leq K_2\left(\left(\sqrt{\overline{A^2}} - A_{\infty}\right)^2 + \left(\sqrt{\overline{B^2}} - B_{\infty}\right)^2 +\left(\sqrt{\overline{C^2}} - C_{\infty}\right)^2\right).
		\end{equation*}

%
%
Step 3 enables to estimate the entropy dissipation $\mathcal{D}(\cc)$
 in terms of reaction terms for averaged quantities:
\begin{equation*}
\mathcal{D}(\cc)\geq  K_1\biggl[\|\nabla A\|^2 + \|\nabla B\|^2 + \|\nabla C\|^2 + \biggl(\frac{\overline{A}}{A_{\infty}} - \frac{\overline{B}^2}{B_{\infty}^2}\biggr)^2 + \biggl(\frac{\overline{B}^2}{B_{\infty}^2} - \frac{\overline{B}\,\overline{C}}{B_{\infty}C_{\infty}}\biggr)^2 + \biggl(\frac{\overline{B}\,\overline{C}}{B_{\infty}C_{\infty}} - \frac{\overline{A}}{A_{\infty}}\biggr)^2\biggr] . 
\end{equation*}
It remains to find $K_3>0$ such that
\begin{multline}\label{g5}
\|\nabla A\|^2 + \|\nabla B\|^2 + \|\nabla C\|^2 + \biggl(\frac{\overline{A}}{A_{\infty}} - \frac{\overline{B}^2}{B_{\infty}^2}\biggr)^2 + \biggl(\frac{\overline{B}^2}{B_{\infty}^2} - \frac{\overline{B}\,\overline{C}}{B_{\infty}C_{\infty}}\biggr)^2 + \biggl(\frac{\overline{B}\,\overline{C}}{B_{\infty}C_{\infty}} - \frac{\overline{A}}{A_{\infty}}\biggr)^2\\
\geq K_3\left(\left(\sqrt{\overline{A^2}} - A_{\infty}\right)^2 + \left(\sqrt{\overline{B^2}} - B_{\infty}\right)^2 +\left(\sqrt{\overline{C^2}} - C_{\infty}\right)^2\right).
\end{multline}
We exploit the ansatz 
\begin{equation*}
\overline{A^2} = A_{\infty}^2(1+\mu_A)^2, \quad \overline{B^2} = B_{\infty}^2(1+\mu_B)^2, \quad \overline{C^2} = C_{\infty}^2(1+\mu_C)^2, \qquad \text{with} \quad
\mu_A, \mu_B, \mu_C\in [-1,+\infty). 
\end{equation*}
Thanks to the natural upper bounds for $\overline{A^2}, \overline{B^2}$ and $\overline{C^2}$, we have more precisely
		\begin{equation*}
		-1\leq \mu_A, \mu_B, \mu_C \leq \mu_{\max} <+\infty ,
		\end{equation*}
		for some $\mu_{\max}>0$. 
		We again denote  $\delta_{A}(x) = A(x) - \overline{A}$, $\delta_B(x) = B(x) - \overline{B}$ and $\delta_C(x) = C(x) - \overline{C}$, and recall that
		\begin{equation*}
		\overline{A} = A_{\infty}(1+\mu_A) - \frac{\|\delta_A\|^2}{\sqrt{\overline{A^2}} + \overline{A}} =: A_{\infty}(1+\mu_A) - R(A)\|\delta_A\|^2 .
		\end{equation*}
Similarly,
		\begin{equation*}
		\overline{B} = B_{\infty}(1+\mu_B) - R(B)\|\delta_B\|^2 \quad \text{ and } \quad \overline{C} = C_{\infty}(1+\mu_C) - R(C)\|\delta_C\|^2.
		\end{equation*}
Thanks to the assumption $\overline{A^2} \geq \varepsilon^2$, $\overline{B^2} \geq \varepsilon^2$ and $\overline{C^2} \geq \varepsilon^2$, we see that
$$
R(A), \;R(B), \;R(C) \leq \frac{1}{\varepsilon}.
$$ 
By using computations similar to \eqref{t1}, we get 
	\begin{equation*}
		\begin{aligned}
			&\left(\frac{\overline{A}}{A_{\infty}} - \frac{\overline{B}^2}{B_{\infty}^2}\right)^2
			= \left[(1+\mu_A) - \frac{R(A)\|\delta_A\|^2}{A_{\infty}} - \left((1+\mu_B) - \frac{R(B)\|\delta_B\|^2}{\textcolor{blue}{B_{\infty}}}\right)^2\right]^2\\
			&\quad \geq \frac 12 \left[(1+\mu_A) - (1+\mu_B)^2\right]^2
			- 3\frac{\|\delta_A\|^4}{A_{\infty}^2}R(A)^2 - 3\frac{4\|\delta_B\|^4(1+\mu_B)^2}{\textcolor{black}{B_{\infty}^2}}R(B)^2 - 3\frac{\|\delta_B\|^8}{\textcolor{black}{B_{\infty}^4}}R(B)^4\\
			&\quad \geq \frac 12 \left[(1+\mu_A) - (1+\mu_B)^2\right]^2 - 3\frac{\|\delta_A\|^2}{A_{\infty}^2} \frac{\|\delta_A\|^2}{\varepsilon^2} - 3\frac{4\|\delta_B\|^2(1+\mu_B)^2}{\textcolor{black}{B_{\infty}^2}} \frac{\|\delta_B\|^2}{\varepsilon^2} - 3\frac{\|\delta_B\|^6}{\textcolor{black}{B_{\infty}^4}} \frac{\|\delta_B\|^2}{\varepsilon^4}			.
		\end{aligned}
	\end{equation*}
	Then, thanks to the boundedness of $\|\delta_A\|$ and $\|\delta_B\|$ and $1/\varepsilon^2 \leq 1/\varepsilon^4$ for  $\varepsilon \in ]0,1]$, we can estimate
	\begin{equation*}
		\begin{aligned}
			\left(\frac{\overline{A}}{A_{\infty}} - \frac{\overline{B}^2}{B_{\infty}^2}\right)^2
			\geq \frac 12 \left[(1+\mu_A) - (1+\mu_B)^2\right]^2 - \frac{K}{\varepsilon^4}\left(\|\delta_A\|^2 + \|\delta_B\|^2\right),
		\end{aligned}
	\end{equation*}
	where $K$ is independent of $\varepsilon$. Similarly,
	\begin{equation*}
		\left(\frac{\overline{B}^2}{B_{\infty}^2} - \frac{\overline{B}\,\overline{C}}{B_{\infty}C_{\infty}}\right)^2 \geq \frac{1}{2}(1+\mu_B)^2(\mu_B - \mu_C)^2 - \frac{K}{\varepsilon^4}(\|\delta_B\|^2 + \|\delta_C\|^2)
	\end{equation*}
	and
	\begin{equation*}
		\left(\frac{\overline{B}\,\overline{C}}{B_{\infty}C_{\infty}} - \frac{\overline{A}}{A_{\infty}}\right)^2 \geq \frac 12\left[(1+\mu_B)(1+\mu_C) - (1+\mu_A)\right]^2 - \frac{K}{\varepsilon^4}(\|\delta_A\|^2 + \|\delta_B\|^2 + \|\delta_C\|^2).
	\end{equation*}
	The constant $K$ above may vary  but is always {\it independent of $\varepsilon$.}
	Therefore, the left hand side of \eqref{g5} is bounded below by
			\begin{equation*}
			\begin{aligned}
			K_4(\varepsilon)\biggl(\left((1+\mu_A) - (1+\mu_B)^2\right)^2 + (1+\mu_B)^2\left(\mu_B - \mu_C\right)^2+ \left((1+\mu_B)(1+\mu_C) - (1+\mu_A)\right)^2\biggr),
			\end{aligned}
			\end{equation*}
			with $K_4(\varepsilon) = O(\varepsilon^4)$, 
			while the right hand side of \eqref{g5} is bounded above by
		\begin{equation*}
		K_3\max\{\Ainf^2, \Binf^2, \Cinf^2\}\left(\mu_A^2 + \mu_B^2 + \mu_C^2\right).
		\end{equation*}
Therefore, we only have to show for some $K_5>0$ (and $K_5 = O(\varepsilon^2)$ as $\varepsilon\rightarrow 0$), that
\begin{multline}\label{g6}
\left((1+\mu_A) - (1+\mu_B)^2\right)^2 + (1+\mu_B)^2\left(\mu_B - \mu_C\right)^2+ \left((1+\mu_B)(1+\mu_C) - (1+\mu_A)\right)^2\\
		\geq K_5\left(\mu_A^2 + \mu_B^2 +\mu_C^2\right),
\end{multline}
under the constraint imposed by the mass conservation law
\begin{equation}\label{massmu}
2 A_{\infty}^2\,\mu_A(\mu_A + 2) + B_{\infty}^2\,\mu_B(\mu_B + 2) + C_{\infty}^2\,\mu_C(\mu_C + 2) = 0.
\end{equation}
To prove \eqref{g6}, we first use $\overline{B^2} \geq \varepsilon^2$, which leads to {$(1 + \mu_B)^2 \geq \frac{\varepsilon^2}{\Binf^2}$} and
\begin{equation}\label{1star}
	(1+\mu_B)^2(\mu_B - \mu_C)^2 \geq \frac{\varepsilon^2}{\Binf^2}(\mu_B - \mu_C)^2.
\end{equation}
The mass conservation law \eqref{massmu} implies then that only 
the following three cases concerning the signs of $\mu_A, \mu_B$ and $\mu_C$ can appear:
\begin{itemize}
\item[(I)] $\mu_A$ and $\mu_B$ have different signs,
\item[(II)] $\mu_A \geq 0$, $\mu_B \geq 0$ and $\mu_C\leq 0$,
\item[(III)] $\mu_A \leq 0$, $\mu_B \leq 0$ and $\mu_C \geq 0$.
\end{itemize}
We will treat each case separately.
		
\medskip		
\noindent \underline{Case (I): {\it $\mu_A$ and $\mu_B$ have different signs}}. In this case, we see that
			\begin{equation}\label{2star}
				\left[(1+\mu_A) - (1+\mu_B)^2\right]^2 \geq \mu_A^2 + \mu_B^2.
			\end{equation}
			Indeed, if $\mu_A\geq 0$ and $\mu_B \leq 0$,
			\begin{equation*}
				\left[(1+\mu_A) - (1+\mu_B)^2\right]^2 = (\mu_A - \mu_B - \mu_B(1+\mu_B))^2 \geq (\mu_A - \mu_B)^2 \geq \mu_A^2 + \mu_B^2
			\end{equation*}
			since $\mu_A - \mu_B \geq 0$ and $-\mu_B(1+\mu_B)\geq 0$.			
			If $\mu_A\leq 0$ and $\mu_B \geq 0$, then 
			\begin{equation*}
				\left[(1+\mu_A) - (1+\mu_B)^2\right]^2 = (\mu_B - \mu_A + \mu_B(1+\mu_B))^2 \geq (\mu_B - \mu_A)^2 \geq \mu_B^2 + \mu_A^2
			\end{equation*}
			From \eqref{1star} and \eqref{2star}, we get
			\begin{equation*}
				\text{LHS of } \eqref{g6} \geq \mu_A^2 + \mu_B^2 + \frac{\varepsilon^2}{B_{\infty}^2}(\mu_B - \mu_C)^2 \geq \frac{1}{4}\min\left\{\frac{\varepsilon^2}{B_{\infty}^2}; 1\right\}(\mu_A^2 + \mu_B^2 + \mu_C^2),
			\end{equation*}
			 and thus obtain \eqref{g6} with	
$$
K_5 = \frac{1}{4}\min\left\{\frac{\varepsilon^2}{\Binf^2}; 1\right\}.
$$
		
		\medskip
		\noindent \underline{Case (II): {\it $\mu_A \geq 0$, $\mu_B \geq 0$ and $\mu_C\leq 0$}}. In this case, we first estimate further \eqref{1star} as
		\begin{equation}\label{1star_bis}
			(1+\mu_B)^2(\mu_B - \mu_C)^2 \geq {\frac{\varepsilon^2}{\Binf^2}}(\mu_B - \mu_C)^2 \geq \frac{\varepsilon^2}{\Binf^2}(\mu_B^2 + \mu_C^2).
		\end{equation}
		There are two possibilities: $\mu_A\geq \mu_B$ or $\mu_B \geq \mu_A$.	
			If $\mu_A \geq \mu_B$, then
			\begin{equation}\label{3star}
				\left[(1+\mu_B)(1+\mu_C) - (1+\mu_A)\right]^2 = (\mu_A - \mu_B - \mu_C(1+\mu_B))^2 \geq (\mu_A - \mu_B)^2.
			\end{equation}			
			If $\mu_B \geq \mu_A$, then 
			\begin{equation}\label{4star}
				\left[(1+\mu_A) - (1+\mu_B)^2\right]^2 = (\mu_B - \mu_A + \mu_B(1+\mu_B))^2 \geq (\mu_B - \mu_A)^2.
			\end{equation}
			Therefore, thanks to \eqref{1star_bis}, \eqref{3star} and \eqref{4star}, we can obtain \eqref{g6} in case (II) with
			$$
				K_5 = \frac{1}{4}\min\left\{\frac{\varepsilon^2}{\Binf^2}; 1\right\}.
			$$
		
		\medskip		
		\noindent \underline{Case (III): {\it $\mu_A \leq 0$, $\mu_B \leq 0$ and $\mu_C\geq 0$}}. This case can be treated in the same way as case (II) by using
		\begin{equation*}
			(1+\mu_B)^2(\mu_B - \mu_C)^2 \geq \frac{\varepsilon^2}{\Binf^2}(\mu_C - \mu_B)^2 \geq \frac{\varepsilon^2}{\Binf^2}(\mu_C^2 + \mu_B^2),
		\end{equation*}
		and considering $\mu_A \geq \mu_B$ or $\mu_A \leq \mu_B$. We get from this case \eqref{g6}, with again
			$$
				K_5 = \frac{1}{4}\min\left\{\frac{\varepsilon^2}{\Binf^2}; 1\right\}.
			$$
		
		\medskip
		In conclusion, we have proven \eqref{g6}, with
		$$
			K_5 = \frac{1}{4}\min\left\{\frac{\varepsilon^2}{\Binf^2}; 1\right\},
		$$
		and consequently finished the proof of Lemma \ref{3x3_eede}.
	\end{proof}
\begin{remark}
Obtaining a general stability statement of the strictly positive complex balance equilibrium for general (complex balanced) reaction-diffusion systems featuring boundary equilibria looks quite involved, and remains open for future investigation. We would just like to point out 
that the difficulty appears only when a trajectory $t \mapsto \ww(t)$
gets too close to a boundary equilibrium.

More precisely, let us consider a complex balanced system,
and denote 
\begin{equation*}
	\mathcal{B}_E = \{\cc^* \in \partial\mathbb{R}^N_{\geq 0}: \cc^* \text{ is a complex balance equilibrium of } \eqref{e0} \}
\end{equation*}
and
\begin{equation*}
	\mathcal{N}_{\varepsilon}(\mathcal B_E) = \{\boldsymbol{\xi}\in \mathbb{R}^N_{>0}: d(\boldsymbol{\xi}, \mathcal B_E) \geq \varepsilon\}
\end{equation*}
 an $\varepsilon$-neighbourhood of $\mathcal B_E$, where $d$ is the Euclidean distance in $\mathbb R^n$.
Let us assume that there exists $\varepsilon>0$  such that each trajectory of \eqref{e0}  starting outside of $\mathcal{N}_\varepsilon(\mathcal B_E)$ remains outside $\mathcal{N}_{\varepsilon}(\mathcal B_E)$ for all $t \ge 0$, 
 that is
	\begin{equation*}
		\overline{\cc}(t) \in \mathbb{R}^N_{>0}\backslash \mathcal{N}_{\varepsilon}(\mathcal B_E) \quad \text{ for all } \quad t \ge 0.
	\end{equation*}
	Then one can show that there exists a constant $\lambda_{\varepsilon}>0$ depending on the domain $\Omega$, the diffusion coefficients $\mathbb D$, the stoichiometric coefficients, and additionally  the parameter $\varepsilon$, such that
	\begin{equation*}
		\mathcal{D}(\cc(t)) \geq \lambda_{\varepsilon} \,\mathcal{E}(\cc(t)|\ww).
	\end{equation*}
\end{remark}

\section{Appendix}\label{appendix}
	In this Appendix, we prove Lemma \ref{ClassicalSolution} about the global existence of classical solution to the system \eqref{3x3_0} and Lemma \ref{lem:conv}. We first need the following classical lemma on the regularity of solutions to the heat equation.
	\begin{lemma}\label{HeatRegularity}[see e.g. \cite{CDF14}]
		Assume that $u_0\in L^{\infty}(\Omega)$, $f\in L^p(\Omega_T)$, $d>0$, and
		that  $u$ is the solution to
		\begin{equation*}
			\begin{cases}
				u_t - d\Delta u = f, &x\in\Omega, \quad t>0,\\
				\nabla u\cdot \nu = 0, &x\in\partial\Omega, \quad t>0,\\
				u(x,0) = u_0(x), &x\in\Omega.
			\end{cases}
		\end{equation*}
		If $1<p < \frac{N+2}{2}$, then $u\in L^{s}(\Omega_T)$ for all $1\leq s< \frac{p(N+2)}{N+2-2p}$. If $p\geq \frac{N+2}{2}$, then $u\in L^{\infty}(\Omega_T)$.
		Moreover, the corresponding norms grow at most polynomially w.r.t $T$.
	\end{lemma}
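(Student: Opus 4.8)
The plan is to deduce the result from $L^p$ maximal parabolic regularity together with the parabolic Sobolev embedding theorem, while keeping careful track of the dependence of the constants on $T$ (throughout, $\Omega_T := \Omega\times(0,T)$). I would first split $u = v + w$, where $v$ solves the homogeneous equation ($f\equiv 0$) with initial datum $u_0$, and $w$ solves the inhomogeneous equation with zero initial datum. Since the Neumann heat semigroup is a contraction on $L^\infty$ by the parabolic maximum principle, one has $\|v\|_{L^\infty(\Omega_T)} \le \|u_0\|_{L^\infty(\Omega)}$; thus $v$ lies in every $L^s(\Omega_T)$ as well as in $L^\infty(\Omega_T)$ and never obstructs the claimed integrability, so that the whole analysis reduces to $w$.

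Next I would invoke $L^p$ maximal regularity for the heat operator with homogeneous Neumann boundary conditions: for $f\in L^p(\Omega_T)$ and zero initial data, the solution $w$ belongs to the anisotropic parabolic Sobolev space $W_p^{2,1}(\Omega_T) = \{w : w,\ \nabla w,\ \nabla^2 w,\ \partial_t w \in L^p(\Omega_T)\}$, with an estimate of the form $\|w\|_{W_p^{2,1}(\Omega_T)} \le C(T)\,\|f\|_{L^p(\Omega_T)}$. This is classical and follows e.g. from the Ladyzhenskaya--Solonnikov--Ural'tseva theory, or from the fact that the Neumann Laplacian generates a bounded analytic semigroup admitting a bounded $H^\infty$-calculus; see also the references in \cite{CDF14}.

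I would then apply the parabolic Sobolev embedding, whose scaling is governed by the parabolic dimension $N+2$. In the subcritical range $2p < N+2$ (that is, $p < \tfrac{N+2}{2}$), one has $W_p^{2,1}(\Omega_T) \hookrightarrow L^s(\Omega_T)$ for the Sobolev exponent $\tfrac1s = \tfrac1p - \tfrac{2}{N+2}$, i.e.\ $s = \tfrac{p(N+2)}{N+2-2p}$, and hence, by interpolation with $L^p(\Omega_T)$, for every smaller exponent as well. In the complementary range $p \ge \tfrac{N+2}{2}$ the same space embeds into $L^\infty(\Omega_T)$ --- indeed into $C^{\alpha,\alpha/2}(\overline{\Omega_T})$ for some $\alpha\in(0,1)$ as soon as $2p > N+2$. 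Together with the bound on $v$, this reproduces exactly the asserted dichotomy.

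The main obstacle is to certify that all constants grow at most polynomially in $T$. This is delicate because on a bounded domain with Neumann conditions the semigroup does not decay --- the spatial-mean mode is conserved --- so $C(T)$ cannot be chosen uniform in $T$. To quantify its growth I would write $w$ through Duhamel's formula and use the smoothing bound $\|e^{\tau d\Delta}g\|_{L^q}\le C\min\!\big(\tau^{-\frac N2(1/p-1/q)},\,1\big)\|g\|_{L^p}$, whose $L^1$-norm in $\tau$ over $(0,T)$ grows linearly in $T$; Young's inequality in the time variable then turns this into a polynomial bound for the relevant mixed space-time norms. Equivalently, one may decompose $(0,T)$ into $O(T)$ unit subintervals and sum the $T$-independent maximal-regularity estimates on each, so that both $C(T)$ and the embedding constant over $\Omega_T$ grow only polynomially, which finishes the proof.
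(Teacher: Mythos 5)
The paper never proves Lemma \ref{HeatRegularity}: it is quoted as a classical fact from \cite{CDF14} and used as a black box in the bootstrap of Lemma \ref{ClassicalSolution}, so your argument has to stand on its own merits. Your skeleton --- splitting $u=v+w$, $L^\infty$-contraction for the homogeneous part, maximal regularity $w\in W^{2,1}_p(\Omega_T)$, then the parabolic Sobolev embedding in parabolic dimension $N+2$ --- is a legitimate route, and it does give the stated integrability in the range $1<p<\frac{N+2}{2}$ (indeed even the critical exponent $s=\frac{p(N+2)}{N+2-2p}$ itself, slightly more than the stated strict inequality), modulo the $T$-dependence discussed below. The genuine gap is the borderline case: $W^{2,1}_p(\Omega_T)$ does \emph{not} embed into $L^\infty(\Omega_T)$ when $p=\frac{N+2}{2}$; this is the critical case of the embedding and it fails for the same reason that $W^{2,N/2}\not\subset L^\infty$ fails in the elliptic setting (log-log type counterexamples, which localize, so boundedness of $\Omega$ does not help). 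Your own parenthetical ``as soon as $2p>N+2$'' concedes exactly this, yet the claim you assert covers $p\ge\frac{N+2}{2}$. In fact the lemma as printed is imprecise at this endpoint: the provable dichotomy is $u\in L^\infty(\Omega_T)$ for $p>\frac{N+2}{2}$, while for $p=\frac{N+2}{2}$ one only gets $u\in L^s(\Omega_T)$ for all $s<\infty$ (which follows from your subcritical case applied to any $p'<p$, since $\Omega_T$ has finite measure). Note this weaker version is all the paper ever uses: in the bootstrap tables of Lemma \ref{ClassicalSolution}, the critical exponent only ever produces $L^{\infty-0}$, never $L^\infty$. So you should prove --- and can only prove --- the corrected dichotomy, not the one you wrote.

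The second problem concerns the polynomial-in-$T$ constants. Your smoothing bound is misquoted: it must read $\|e^{\tau d\Delta}g\|_{L^q}\le C\max\bigl(\tau^{-\frac N2(\frac1p-\frac1q)},1\bigr)\|g\|_{L^p}$, since the factor blows up as $\tau\to 0$ and does \emph{not} decay as $\tau\to\infty$ (the spatial mean is conserved) --- exactly the opposite of your $\min$; with $\max$ in place of $\min$ the kernel's $L^1(0,T)$-norm does grow linearly in $T$ in the subcritical range and the Duhamel--Young argument closes, which is essentially the classical proof of this lemma. Your alternative of cutting $(0,T)$ into $O(T)$ unit intervals and ``summing the $T$-independent maximal-regularity estimates'' is incomplete as stated: restarting maximal regularity on $[k,k+1]$ requires control of the trace $w(k)$ in $B^{2-2/p}_{p,p}(\Omega)$, and the naive recursion $a_{k+1}\le C\bigl(a_k+\|f\|_{L^p(\Omega\times(k,k+1))}\bigr)$ with $C>1$ gives a priori \emph{exponential} growth $C^T$. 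It can be repaired, either by observing that the Neumann heat semigroup is uniformly bounded (in fact contractive, by commutation with $\Delta$ and $L^p$-contractivity plus interpolation) on the trace space, so that the recursion has coefficient $1$ in front of $a_k$, or by splitting off the spatial average $\overline{w}(t)=\int_0^t\overline{f}\,ds$ (which grows like $T^{1-1/p}\|f\|_{L^p(\Omega_T)}$) and using the spectral gap on mean-zero functions; but some such structural input must be made explicit, since for a general evolution the interval-summation argument simply fails.
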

	\begin{proof}[Proof of Lemma \ref{ClassicalSolution}]
		When $1\leq N \leq 2$ or $N\geq 6$ and $\delta$ is small enough, the results follow from \cite{CDF14}.
		Here we prove that when $3 \leq N \leq 5$, the system \eqref{3x3_0} possesses global classical solution without assumption on  the diffusion coefficients.
		First we observe that
		\begin{equation*}
			(2a+b+c)_t - \Delta(2d_aa + d_bb + d_cc) = 0,
		\end{equation*}
		or equivalently
		\begin{equation*}
			z_t - \Delta(Mz) = 0,
		\end{equation*}
		with $z = 2a+b+c$, and 
		\begin{equation*}
			0< \min\{d_a,d_b,d_c\} \leq M(x,t):= \frac{2d_aa + d_bb + d_cc}{2a + b + c} \leq \max\{d_a,d_b,d_c\} < +\infty.
		\end{equation*}
		It follows from e.g. \cite{Pie10} that $z \in L^2(\Omega_T)$ for all $T>0$
		(with norms at most polynomially growing w.r.t $T$), which in combination with the nonnegativity of the concentrations leads to $a, b, c\in L^2(\Omega_T)$. Secondly, 
we obtain from \eqref{3x3_0} that
		\begin{subequations} 
			\begin{align}
				a_t - d_a\Delta a &\leq k_3 b^2,\label{ff1}\\
				b_t - d_b\Delta b &\leq k_1 a + k_2 bc,\label{ff2}\\
				c_t - d_c\Delta c &\leq k_1 a.\label{ff3}
			\end{align}
		\end{subequations}

The strategy of the proof is the following: By using Lemma \ref{HeatRegularity} and the regularity of $a$, we get from \eqref{ff3} an improved regularity of $c$. This yields an improved regularity of $a + bc$ and thus from \eqref{ff2} an improvement of the regularity of $b$. This information is used in \eqref{ff1} to deduce the improved regularity of $a$. Graphically, we have the following bootstrap iteration:
		\begin{equation*}
			\begin{tikzpicture}
				\node (a) {$\boxed{a}$} node (b) at (4, 0) {$\boxed{c}$} node (c) at (8, 0) {$\boxed{b}$};
				\draw[arrows=->]  (a.east) -- node[above]{\eqref{ff3}} (b.west);
				\draw[arrows=->]  (a.east) -- node[below]{Regularity of $a$} (b.west);
				\draw[arrows=->]  (b.east) -- node[above]{\eqref{ff2}}  (c.west);
				\draw[arrows=->]  (b.east) -- node[below]{Regularity of $a+bc$}  (c.west);
				\draw (c.south) -- (8, -1.5);
				\draw (8, -1.5) -- node[above]{\eqref{ff1}} (0, -1.5);
				\draw (8, -1.5) -- node[below]{Regularity of $b^2$} (0, -1.5);
				\draw[arrows=->] (0, -1.5) -- (a.south);
			\end{tikzpicture}
		\end{equation*}
		This iteration terminates whenever we achieve $a, b, c \in L^{\infty}(\Omega_T)$. In the rest of the proof, we will show how this iteration works when $N = 3, 4, 5$ and also explain why it fails when $N\geq 6$. To avoid unnecessary long explanations, we will present the iterations for each case in a table. We shall use the notation $f\in L^{\alpha - 0}$ if $f\in L^p(\Omega_T)$ for all $p<\alpha$ and $f\in L^{\infty-0}$ if $f\in L^p(\Omega_T)$ for all $p<+\infty$ (with norms at most polynomially growing w.r.t $T$).
		
		\medskip
		\noindent{\underline{Case $N = 3$}}:
		\begin{center}
		  \begin{tabular}{ | l | c | c | c | c |}
		    \hline
		    				 & $a$ 			& $c$ 			& $a + bc$ & $b$\\ \hline
		    Step 0      & $L^2$     &  $L^2$      &      --        &  $L^2$\\ \hline				 
		    Step 1 		& $L^2$ 	&  $L^{10-0}$		& $L^{5/3 - 0}$ & $L^{5 - 0}$ \\ \hline
		    Step 2 		& $L^{\infty - 0}$ 	&  $L^{\infty}$		& $L^{5 - 0}$ & $L^{\infty}$ \\ \hline
		    Step 3 		& $L^{\infty}$ 	&  --		& -- & -- \\ \hline
		  \end{tabular}
		\end{center}
		
		\medskip
		\noindent{\underline{Case $N = 4$}}:
		\begin{center}
			  \begin{tabular}{ | l | c | c | c | c |}
			    \hline
			    				 & $a$ 			& $c$ 			& $a + bc$ & $b$\\ \hline
			    Step 0      & $L^2$     &  $L^2$      &      --        &  $L^2$\\ \hline
			    Step 1 		& $L^2$ 	&  $L^{6-0}$		& $L^{3/2 - 0}$ & $L^{3 - 0}$ \\ \hline
			    Step 2 		& $L^{3-0}$ 	&  $L^{\infty-0}$		& $L^{3 - 0}$ & $L^{\infty-0}$ \\ \hline
			    Step 3 		& $L^{\infty}$ 	&  $L^{\infty}$		& $L^{\infty-0}$ & $L^{\infty}$ \\ \hline
			  \end{tabular}
		\end{center}			
		
		\medskip
		\noindent{\underline{Case $N = 5$}}:
		\begin{center}
			  \begin{tabular}{ | l | c | c | c | c |}
			    \hline
			    				 & $a$ 			& $c$ 			& $a + bc$ & $b$\\ \hline
			    Step 0      & $L^2$     &  $L^2$      &      --        &  $L^2$\\ \hline
			    Step 1 		& $L^{2}$ 	&  $L^{14/3-0}$		& $L^{7/5 - 0}$ & $L^{7/3 - 0}$ \\ \hline
			    Step 2 		& $L^{2}$ 	&  $L^{14/3-0}$		& $L^{14/9 - 0}$ & $L^{14/5-0}$ \\ \hline
			    Step 3 		& $L^{\infty-0}$ 	&  $L^{\infty}$		& $L^{14/5-0}$ & $L^{\infty}$ \\ \hline
			    Step 4 		& $L^{\infty}$ 	&  --		& -- & -- \\ \hline
			  \end{tabular}
		\end{center}		
		
		\medskip
				\noindent{\underline{Case $N = 6$}}:
				\begin{center}
					  \begin{tabular}{ | l | c | c | c | c |}
					    \hline
					    				 & $a$ 			& $c$ 			& $a + bc$ & $b$\\ \hline
					    Step 0      & $L^2$     &  $L^2$      &      --        &  $L^2$\\ \hline
					    Step 1 		& $L^{2}$ 	&  $L^{4-0}$		& $L^{4/3 - 0}$ & $L^{2}$ \\ \hline
					    Step 2 		& $L^{2}$ 	&  $L^{4-0}$		& $L^{4/3 - 0}$ & $L^{2}$ \\
					    \hline
					  \end{tabular}
				\end{center}		
		In case $N=6$ (or similarly $N\geq 6$) after Step 1, one cannot further improve the regularity of $a$, $b$ or $c$, to continue the bootstrap argument.
	\end{proof}
	
	\begin{proof}[Proof of Lemma \ref{lem:conv}]
		We recall the definition of the convexification:
		\begin{equation*}
			\widehat{\Phi}(\aa) = \sup\{h(\aa): h \text{ is an affine function and } h \leq \Phi\}.
		\end{equation*}
		Then, $\widehat{\Phi}(\aa) = \Phi(\aa)$ if we can find an affine function below $\Phi$ which is equal to $\Phi$ at point $\aa$. In the following, we will prove that if there exists such an $\aa\in B_{\delta}(\ww)$ for $\delta$ small enough, then the linear approximation of $\Phi$ at $\aa$, which we denote by $L_{\aa}\Phi$, is such an affine function.
		
		It is obvious that $\Phi(\aa) = L_{\aa}\Phi(\aa)$. It remains to show that 
		\begin{equation}\label{main1}
			\Phi(\cc) \geq L_{\aa}\Phi(\cc) \quad \text{ for all }\quad \cc\in \mathbb R^N_{>0}.
		\end{equation}
		In order to do so, recall from \eqref{tt1} that $\Phi(\cc) = F(\cc) + G(\cc)$. We will treat $F$ and $G$ separately.
		 
		First, by using the notations $\rho = |\cc - \aa|$, $\delta = |\aa - \ww|$, and direct computation of the Taylor expansion, we have thanks to the convexity of $F$:
		\begin{equation*}
			F(\cc) - L_{\aa}F(\cc) \geq \frac{\nu \rho^2}{1+\rho},
		\end{equation*}
		for some constant $\nu>0$.
		
		Next, thanks to the smoothness of $G$, and $G(\ww) = DG(\ww) = 0$ and $D^2G(\ww) \geq 0$, we get for $\aa \in B_{\varepsilon_*}(\ww)$ 
		(with $\varepsilon_*$ small enough)
		\begin{equation*}
			|G(\aa)| \leq M\delta^2, \quad |DG(\aa)| \leq M\delta, \quad D^2G(\aa) \geq -M\delta, \quad |D^3G(\aa)| \leq M.
		\end{equation*}
		By using Taylor expansion again, we see that locally:
		\begin{equation}\label{rho}
			G(\cc) - L_{\aa}G(\cc) \geq -M\delta\rho^2 - M\rho^3.
		\end{equation}
When $\cc$ is far away from $\aa$ and $\rho$ is not small, then by using the fact that $\aa$ is close to $\ww$ and $G(\ww)$ is zero only at point $\ww$, we obtain that $G(\cc) - L_{\aa}G(\cc)$ is bounded below. Thus, \eqref{rho} holds also in this case for some suitable $M>0$. On the other hand, we can also estimate
		\begin{equation}\label{rho1}
			G(\cc) - L_{\aa}G(\cc) \geq -M\delta^2 - M\delta\rho.
		\end{equation}
		Hence, we obtain from \eqref{rho} and \eqref{rho1} the lower bound
		\begin{equation*}
			G(\cc) - L_{\aa}G(\cc) \geq -M(\delta+\rho)\min\{\delta, \rho^2\} \geq -\frac{4M\sqrt{\delta} \rho^2}{1+\rho}.
		\end{equation*}
		Now, by choosing $\delta = \min\{\varepsilon_*, \nu^2/(4M)^2\}$, we have for all $\aa \in B_{\delta}(\ww)$,
		\begin{equation*}
			\Phi(\cc) - L_{\aa}\Phi(\cc) \geq \frac{(\nu - 4M\sqrt{\delta})\rho^2}{1+\rho} \geq 0.
		\end{equation*}
		This proves \eqref{main1} and thus completes the proof of the Lemma.
	\end{proof}

\vskip 0.5cm
\noindent{\bf Acknowledgements.} This work was carried out during the visit of the third author to ENS Cachan and the visit of the second and the third authors to Universit\'{e} Paris Diderot. The universities' hospitality is gratefully acknowledged. The third author is supported by International Research Training Group IGDK 1754
and by a public grant as part of the Investissement d'avenir project, reference ANR-11-LABX-0056-LMH, LabEx LMH. This work has partially been supported by NAWI Graz.  The research leading to this paper was also funded by
the French ``ANR blanche'' project Kibord: ANR-13-BS01-0004, and by Universit\'e Sorbonne Paris Cit\'e, in the framework of the ``Investissements d'Avenir'', convention ANR-11-IDEX-0005.

\end{document}